\documentclass{amsart}

\usepackage{amssymb, amsmath}
\usepackage{mathrsfs}
\usepackage{amscd}
\usepackage{verbatim}
\usepackage{enumerate}
\usepackage{a4wide}

\usepackage{hyperref}

\allowdisplaybreaks

\theoremstyle{plain}
\newtheorem{thrm}{Theorem}[section]
\theoremstyle{remark}
\newtheorem{rmrk}[thrm]{Remark}

\theoremstyle{plain}
\newtheorem{clry}[thrm]{Corollary}
\newtheorem{lemma}[thrm]{Lemma}
\newtheorem{prop}[thrm]{Proposition}
\newtheorem{defn}[thrm]{Definition}

\newtheorem{facts}[thrm]{Facts}

\numberwithin{equation}{section}

\newcommand{\inprod}[1]{\left\langle #1 \right\rangle}

\newcommand{\leaderfill}{\leaders\hbox to 1em{\hss.\hss}\hfill}
\newcommand{\RR}{\mathbb{R}}
\newcommand{\R}{\mathbb{R}}

\newcommand{\NN}{\mathbb{N}}
\newcommand{\LL}{\mathbb{L}}

\renewcommand{\P}{{\mathbb P}}
\newcommand{\EE}{\mathbb{E}}
\newcommand{\E}{\mathbb{E}}
\newcommand{\FF}{\mathbb{F}}
\newcommand{\DD}{\mathbb{D}}
\newcommand{\D}{\mathbb{D}}
\newcommand{\e}{\varepsilon}
\newcommand{\one}{\mathbf{1}}

\newcommand{\calL}{{\mathscr L}}
\renewcommand{\O}{\Omega}

\newcommand{\F}{{\mathscr F}}

\newcommand{\g}{\gamma}
\newcommand{\lb}{\langle}
\newcommand{\rb}{\rangle}
\newcommand{\limn}{\lim_{n\to\infty}}
\newcommand{\limk}{\lim_{k\to\infty}}
\newcommand{\umd}{\textsc{umd} }

%%%%%%%%%%%%%%%%%%%%%%%%%%%%%%%%%%%%%%%%%%%%%%%%%%%%%%%%%%%%%%%%%%%%%%%%%%%%%%%%%

\begin{document}

\author{Matthijs Pronk and Mark Veraar}
\address{Delft Institute of Applied Mathematics\\
Delft University of Technology \\ 2600 GA Delft\\The
Netherlands} \email{Matthijs.Pronk@tudelft.nl}
\email{M.C.Veraar@tudelft.nl}

\title{Tools for Malliavin calculus in UMD Banach spaces}

\begin{abstract}
In this paper we study the Malliavin derivatives and Skorohod integrals for
processes taking values in an infinite dimensional space. Such results are motivated by their applications to SPDEs and in particular financial mathematics. Vector-valued Malliavin theory in Banach space $E$ is naturally restricted to spaces $E$ which have the so-called \umd property, which arises in harmonic analysis and stochastic integration theory. We provide several new results and tools for the Malliavin derivatives and Skorohod integrals in an infinite dimensional setting. In particular, we prove weak characterizations, a chain rule for Lipschitz functions, a sufficient condition for pathwise continuity and an It\^o formula for non-adapted processes.
\end{abstract}

\thanks{The first named author is supported by VICI subsidy 639.033.604
of the Netherlands Organisation for Scientific Research (NWO). The second author
was supported by VENI subsidy 639.031.930
of the Netherlands Organisation for Scientific Research (NWO)}

\keywords{Malliavin calculus, Skorohod integral, anticipating processes, Meyer inequalities, UMD Banach space, type $2$ space, chain rule, It\^o formula}

\subjclass[2000]{60H07, 60B11, 60H05}

%60H07  (1980-now) Stochastic calculus of variations and the Malliavin calculus
%46N30  (1991-now) Applications in probability theory and statistics
%60H05  (1973-now) Stochastic integrals
%60B11  (1980-now) Probability theory on linear topological spaces [See also 28C20]

\maketitle

\section{Introduction}

Since the seminal paper \cite{Malliavin78}, Malliavin calculus has played an influential role in probability theory (see the monographs \cite{Bell,BGJ,Bism,Bog10,Boga-Gaussian,DPMal,Jan97,Mal97,Nualart2,Ocone,Wat} and references therein). In particular, it has played an important part in the study of stochastic (partial) differential equations (S(P)DE) and mathematical finance (see the monographs \cite{CarmTeh,Fil,Im08,MalTha,NOP,SanzSole} and references therein). For certain models in finance and SPDEs, Malliavin calculus and Skorohod integration can be applied in an infinite dimensional framework. In the setting of Hilbert space-valued stochastic processes details on this matter can be found in \cite{CarmTeh,Fil,GrPar,LeNu} and references therein. For Banach space-valued stochastic processes, there are geometric obstacles which have to be overcome in order to extend stochastic calculus to this setting.

In  \cite{NVW1} a new It\^o type integration theory for processes with values in a Banach space $E$ has been developed using earlier ideas from \cite{Ga1,MC}. The theory uses a geometric assumption on $E$, called the \umd-property, and it allows two-sided estimates for $L^p$-moments for stochastic integrals (see Theorem \ref{thm:stochintI} below). A deep result in the theory of \umd spaces is that a Banach space $E$ has the \umd property if and only if the Hilbert transform is bounded on $L^p(\R;E)$ (see \cite{Bu3} and references therein).
The class of \umd spaces include all Hilbert spaces, $L^q$-spaces with $q\in (1, \infty)$ and the reflexive Sobolev spaces, Besov spaces and Orlicz spaces. Among these spaces the $L^q$-spaces with $q\in (1, \infty)$ are the most important ones for applications to SPDEs.
Recently, the full strength of the stochastic integration theory from \cite{NVW1} has made it possible to obtain optimal space-time-regularity results for a large class of SPDEs (see \cite{NVW11eq,NVW10}).

In \cite{janjan}, Malliavin calculus and Skorohod integration have been studied in the Banach space-valued setting. In particular, the authors have shown that if the space $E$ has the \umd property, the Skorohod integral is an extension of the It\^o integral for processes with values in $E$ (see Theorem \ref{div-norm-estimate} below). The main result of \cite{janjan} is a Clark-Ocone representation formula for $E$-valued random variables, where again $E$ is a \umd space. A previous attempt to obtain this representation formula was been given in \cite{MayZak1}, but the proof contains a gap (see \cite{MayZak2}).
Further developments on Malliavin calculus have been made in \cite{janmaas} and in particular, the connection with Meyer's inequalities has been investigated. Here the \umd property is needed again in order to obtain the vector-valued analagon of the Meyer's inequalities (see \cite{janmaas} and \cite{PisierMeyer}). In particular, in \cite{janmaas} a vector-valued version of Meyer's inequalities for higher order derivatives has been proved. Also here there has been a previous attempt to show Meyer's inequalities for higher order derivatives in \umd spaces (see \cite{MalNu}), but unfortunately the proof contains a gap since an integral in \cite[Theorem 1.17]{MalNu} is not convergent.

In this paper we proceed with the development of Malliavin calculus in the \umd-valued setting. After recalling some prerequisites, in Section \ref{subsec:indweak} we will prove a weak characterization of the Malliavin derivative and extend the Meyers-Serrin result for Sobolev spaces to the setting of Gaussian Sobolev spaces. In Section \ref{sec:calculus} several calculus facts such as the product and chain rule will be obtained. Under additional geometric conditions, we further extend the chain rule to the case of Lipschitz function in Section \ref{subsec:Lip}. Some new results for the Skorohod integral are derived in Section \ref{sec:Skorohod}. In particular, pathwise properties of the Skorohod integral are studied in Section \ref{sec:stochintprocess}. In the final Section \ref{sec:Ito1} we prove a version of It\^o's formula in the non-adapted setting. The results of this paper are motivated by future applications to SPDEs, which will be presented in a forthcoming paper of the authors.

\medskip

\textbf{Acknowledgement.} The authors thank Markus Kunze, Jan Maas, Jan van Neerven and the anonymous referee for helpful discussion and comments.

\section{Preliminaries}
Below all vector spaces are assumed to be real. With minor modifications, most results can be extended to complex spaces as well.
For a parameter $t$, and real numbers $A$ and $B$, we write $A\lesssim_t B$ to indicate that there is a constant $c$ only depending on $t$ such that $A\leq c B$. Moreover, we write $A\eqsim_t B$ if both $A\lesssim_t B$ and $B\lesssim_t A$ hold.

\subsection{$\gamma$-radonifying operators}
Let $(\O,\F,\P)$ be a probability space, let $H$ be a separable Hilbert space and let $E$ be a Banach space.
In this section we will review some results about $\gamma$-radonifying operators. For a detailed overview we refer to \cite{DJT,KaWe,Neerven-Radon}.
For $h\in H, x\in E$, we denote by $h\otimes x$ the operator in $\calL(H,E)$ defined by \[(h\otimes x)h' := \inprod{h, h'}x, \qquad h'\in H.\]
For finite rank operators $\sum_{j=1}^n h_j \otimes x_j$, where the vectors $h_1, \ldots, h_n\in H$ are orthonormal and $x_1, \ldots, x_n\in E$, we define
\[ \Big\|\sum_{j=1}^n h_j \otimes x_j\Big\|_{\gamma(H,E)} := \Big(\EE\Big\| \sum_{n=1}^N \gamma_n x_n\Big\|^2_E\Big)^{1/2}.\]
Here $(\gamma_n)_{n\geq 1}$ is a sequence of independent standard real-valued Gaussian random variables.
The space $\gamma(H,E)$ of {\em $\gamma$-radonifying operators} is defined as the closure of all finite rank operators in the norm $\|\cdot\|_{\gamma(H,E)}$. One can show that a bounded operator $R:H\to E$ belongs to $\gamma(H,E)$ if and only if the sum $\sum_{n\geq 1} \gamma_n R h_n$ converges in $L^2(\Omega;E)$.

By the Kahane--Khintchine inequalities (see \cite[Corollary 3.2]{LeTa}) one has
\[\Big(\E\Big\| \sum_{n=1}^N \gamma_n x_n\Big\|^p\Big)^{1/p} \eqsim_p \Big(\E\Big\| \sum_{n=1}^N \gamma_n x_n\Big\|^2\Big)^{1/2},\]
and this extends to infinite sums as well, whenever the sum is convergent.

The $\gamma$-radonifying operators also satisfy the following ideal property (see \cite{Bax,LinPie,Neerven-Radon}).
\begin{prop}[Ideal property]\label{gamma-ideal-property}
Suppose that $H_0$ and $H_1$ are Hilbert spaces and $E_0$ and $E_1$ are Banach spaces.
Let $R\in \gamma(H_0,E_0)$, $T\in \calL(H_1,H_0)$ and $U\in \calL(E_0,E_1)$, then $URT\in \gamma(H_1,E_1)$ and \[\|URT\|_{\gamma(H_1,E_1)} \leq \|U\| \|R\|_{\gamma(H_0,E_0)} \|T\|.\]
\end{prop}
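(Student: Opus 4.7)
The plan is to prove the inequality first for finite rank $R$ and then pass to general $R \in \gamma(H_0,E_0)$ by density. Treating $U$ and $T$ asymmetrically, the left factor $U$ is the easy side: for $R = \sum_{j=1}^n h_j \otimes x_j$ with $(h_j)$ orthonormal in $H_0$, one has $UR = \sum_j h_j \otimes Ux_j$, and pulling $U$ out of the Gaussian sum in the definition of the $\gamma$-norm immediately gives $\|UR\|_{\gamma(H_0,E_1)} \le \|U\|\,\|R\|_{\gamma(H_0,E_0)}$. So the substantive work is to prove $\|RT\|_{\gamma(H_1,E_0)} \le \|T\|\,\|R\|_{\gamma(H_0,E_0)}$ for such a finite rank $R$.

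For the $T$-side I would write $RT = \sum_{j=1}^n (T^*h_j) \otimes x_j$, fix an orthonormal basis $(f_k)$ of $H_1$, and expand each $T^*h_j$ in that basis. After interchanging the order of summation this yields
\[
\sum_k \gamma_k (RT) f_k \;=\; \sum_{j=1}^n \tilde\gamma_j\, x_j, \qquad \tilde\gamma_j := \sum_k \langle T^*h_j, f_k\rangle \gamma_k,
\]
where $(\tilde\gamma_j)_{j=1}^n$ is a centered Gaussian vector with covariance $\E[\tilde\gamma_i\tilde\gamma_j] = \langle TT^* h_i, h_j\rangle_{H_0}$. The decisive input is the operator inequality $\|T\|^2 I - TT^* \ge 0$ on $H_0$, which lets me realize $(\|T\|\gamma_j')_j$ (with $(\gamma_j')$ an auxiliary independent standard Gaussian sequence) as $(\tilde\gamma_j + \eta_j)_j$, where $(\eta_j)$ is an independent centered Gaussian vector. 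Because $\eta$ is mean-zero and independent of $\tilde\gamma$, Jensen's inequality applied to the convex function $x \mapsto \|x\|_{E_0}^2$ (using $\sum_j \tilde\gamma_j x_j = \E[\sum_j(\tilde\gamma_j+\eta_j)x_j \mid \tilde\gamma]$) gives
\[
\E\Bigl\|\sum_j \tilde\gamma_j x_j\Bigr\|^2 \;\le\; \E\Bigl\|\sum_j (\tilde\gamma_j+\eta_j) x_j\Bigr\|^2 \;=\; \|T\|^2\, \E\Bigl\|\sum_j \gamma_j' x_j\Bigr\|^2 \;=\; \|T\|^2\, \|R\|_{\gamma(H_0,E_0)}^2,
\]
and combining this with the $U$-estimate yields the desired bound for finite rank $R$.

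To extend to general $R \in \gamma(H_0,E_0)$, I would pick finite rank $R_n \to R$ in $\gamma$-norm; the inequality just proved shows that $(UR_nT)_n$ is Cauchy in $\gamma(H_1,E_1)$ and therefore converges there to some $S$, while at the same time $UR_nT \to URT$ in the operator norm since the $\gamma$-norm dominates the operator norm — hence $S = URT$ and the inequality persists in the limit. The one step that really requires care is the Gaussian comparison argument, which hinges on the monotonicity of Banach-valued Gaussian second moments in the covariance of the coefficients; the rest is bookkeeping with orthonormal bases and a standard density extension.
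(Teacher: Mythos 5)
The paper does not prove this proposition at all: it is quoted with a citation to Baxendale, Linde--Pietsch and van Neerven's survey, so there is no in-paper argument to compare against. Your proof is correct and is essentially the standard one from those references, made self-contained. The $U$-side is immediate as you say, and your $T$-side argument is a direct proof of Gaussian covariance domination: since $(h_j)$ is orthonormal, the matrix $\bigl(\|T\|^2\delta_{ij}-\langle TT^*h_i,h_j\rangle\bigr)_{i,j}$ is positive semidefinite, so the auxiliary independent Gaussian vector $\eta$ exists, and the conditional Jensen step $\E\bigl\|\E[\,\cdot\mid\tilde\gamma\,]\bigr\|^2\le \E\|\cdot\|^2$ is legitimate for Bochner-square-integrable variables. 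The cited sources usually package this as the $\gamma$-domination principle (if $\|S^*x^*\|_{H}\le \|\tilde S^*x^*\|_{H}$ for all $x^*$ then $\|S\|_\gamma\le\|\tilde S\|_\gamma$), applied with $(RT)^*x^*=T^*R^*x^*$; your construction of $\eta$ is exactly the proof of that principle in the finite-rank case, so the two routes coincide in substance. Two pieces of bookkeeping you wave at should be stated when writing this up: (i) for a finite rank operator the quantity $\E\|\sum_k\gamma_k Sf_k\|^2$ is independent of the orthonormal basis $(f_k)$ (orthogonal invariance of Gaussian vectors), which you use to evaluate $\|RT\|_{\gamma(H_1,E_0)}$ in the basis $(f_k)$ rather than via an orthonormal representation of $RT$; and (ii) when combining the two estimates, either apply the $U$-step first (so that the $T$-step is applied to $UR=\sum_j h_j\otimes Ux_j$, which is already in orthonormal form), or re-orthonormalize $RT$ before applying the $U$-step. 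The density extension, including $\|S\|_{\calL(H,E)}\le\|S\|_{\gamma(H,E)}$ to identify the limit with $URT$, is correct as written.
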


The following lemma is called the $\gamma$-Fubini Lemma, and is taken from \cite{NVW1}.
\begin{lemma}\label{sec:gamma,Fubini-lemma}
Let $(S, \Sigma, \mu)$ be a $\sigma$-finite measure space and let $1\leq p<\infty$. The mapping $U: L^p(S;\gamma(H,E)) \to \calL(H, L^p(S;E))$, given by $((UF)h)s = F(s)h$ for $s\in S$ and $h\in H$, defines an isomorphism $L^p(S;\gamma(H, E)) \eqsim \gamma(H, L^p(S;E))$.
\end{lemma}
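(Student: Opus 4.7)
The plan is to verify that the map $U$ is a (topological) isomorphism by first establishing the norm equivalence on a suitable dense subspace common to both sides, and then extending by continuity. The natural dense subspace to use consists of functions of the form
\[ F(s) = \sum_{j=1}^n h_j \otimes x_j(s), \qquad s \in S,\]
where $(h_j)_{j=1}^n$ is an orthonormal system in $H$ and $x_j \in L^p(S;E)$. On such $F$, the definition gives $(UF)h = \sum_{j=1}^n \langle h_j,h\rangle x_j$, an obvious finite rank operator from $H$ into $L^p(S;E)$.

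The heart of the proof is the norm computation. Writing out the definition,
\[ \|F\|_{L^p(S;\gamma(H,E))}^p = \int_S \Big(\EE\Big\|\sum_{j=1}^n \gamma_j x_j(s)\Big\|_E^2\Big)^{p/2} d\mu(s).\]
Applying the Kahane--Khintchine inequalities pointwise in $s$ lets me replace the $L^2$-moment of the Gaussian sum by its $L^p$-moment at the cost of a constant $K_p$. Fubini's theorem then gives
\[ \|F\|_{L^p(S;\gamma(H,E))}^p \eqsim_p \EE \int_S \Big\|\sum_{j=1}^n \gamma_j x_j(s)\Big\|_E^p d\mu(s) = \EE \Big\|\sum_{j=1}^n \gamma_j x_j\Big\|_{L^p(S;E)}^p.\]
A second application of Kahane--Khintchine, this time in the Banach space $L^p(S;E)$, brings this back to
\[ \EE \Big\|\sum_{j=1}^n \gamma_j x_j\Big\|_{L^p(S;E)}^p \eqsim_p \Big(\EE \Big\|\sum_{j=1}^n \gamma_j x_j\Big\|_{L^p(S;E)}^2\Big)^{p/2} = \|UF\|_{\gamma(H,L^p(S;E))}^p,\]
yielding the desired two-sided estimate $\|F\|_{L^p(S;\gamma(H,E))} \eqsim_p \|UF\|_{\gamma(H,L^p(S;E))}$ on this subspace.

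Next I would verify density on each side. On the right, finite rank operators $\sum_j h_j \otimes y_j$ with $y_j \in L^p(S;E)$ (and $h_j$ orthonormal, by Gram--Schmidt) are dense in $\gamma(H,L^p(S;E))$ by the very definition of the $\gamma$-radonifying norm, and each such operator lies in the image of $U$. On the left, it is a standard approximation fact that functions of the form above are dense in $L^p(S;\gamma(H,E))$: one first approximates an arbitrary element by simple $\gamma(H,E)$-valued functions, and then approximates each value in the $\gamma(H,E)$-norm by a finite rank operator, collecting the vectors $h_j$ into a common orthonormal system.

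With the norm equivalence and the two density statements in hand, $U$ extends uniquely to a bounded operator $L^p(S;\gamma(H,E)) \to \gamma(H, L^p(S;E))$ with a bounded inverse on the dense range, hence to the claimed isomorphism. The main analytic obstacle is the middle step: the $\gamma$-norms are defined via $L^2$-moments of Gaussian sums, while Fubini only gives what one wants after matching the outer Bochner exponent to the Gaussian exponent, so the Kahane--Khintchine equivalence is used in an essential way (and is the source of the $p$-dependent constants). The density argument on the $L^p(S;\gamma(H,E))$ side is the other point that needs some care, but it is routine once one reduces to simple functions with a single common orthonormal set $(h_j)$.
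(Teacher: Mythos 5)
Your proof is correct. The paper itself states this lemma without proof, importing it from the cited reference [van Neerven--Veraar--Weis, \emph{Ann. Probab.} 35 (2007)], and your argument --- the two-sided norm identity on finite rank simple functions via two applications of the Kahane--Khintchine inequalities sandwiching a Fubini step, followed by the two density observations --- is precisely the standard proof given there. The only point worth making explicit in a written version is that the continuous extension of $U$ still acts by the pointwise formula $((UF)h)(s)=F(s)h$, but this is a routine limit check.
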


The following lemma is taken from \cite{KaWe} and for convenience we include a proof.
\begin{lemma}\label{lem:trace}
Suppose that $H_0$ and $H_1$ are Hilbert spaces. Let $R\in \gamma(H_0,E)$ and $S\in \gamma(H_1,E^*)$. Define the operator $\inprod{R,S}\in \calL(H_1, H_0)$ by
\[\inprod{R,S}h := R^*(Sh),\ h\in H_2.\]
Then $\inprod{R,S}$ is a trace class operator.
\end{lemma}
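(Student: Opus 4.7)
The plan is to establish, for finite-rank $R$ and $S$, the bilinear estimate
\[
\|\langle R, S\rangle\|_{S_1(H_1, H_0)} \leq \|R\|_{\gamma(H_0, E)} \|S\|_{\gamma(H_1, E^*)},
\]
where $S_1$ denotes the trace class, and then to extend to general $R, S$ by density of finite-rank operators in $\gamma$.

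For finite-rank $R$ and $S$, the operator $\langle R, S\rangle = R^*S$ is automatically trace class, so what needs work is the norm bound. I would invoke the standard Hilbert-space duality
\[
\|T\|_{S_1(H_1, H_0)} = \sup \bigl\{\, |\operatorname{tr}(UT)| : U \in \calL(H_0, H_1),\ \|U\| \leq 1\,\bigr\},
\]
and compute for such a $U$, against an ONB $(f_j)$ of $H_1$,
\[
\operatorname{tr}(UR^*S) = \sum_j \inprod{UR^*Sf_j, f_j}_{H_1} = \sum_j \inprod{Sf_j, RU^*f_j}_{E^*, E}.
\]
By the ideal property (Proposition \ref{gamma-ideal-property}), $V := RU^* \in \gamma(H_1, E)$ with $\|V\|_{\gamma(H_1,E)} \leq \|R\|_{\gamma(H_0, E)}$. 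The key trick is then to introduce an auxiliary i.i.d.\ sequence $(\gamma_j)$ of standard Gaussians and use orthogonality $\EE \gamma_j \gamma_k = \delta_{jk}$ to rewrite
\[
\sum_j \inprod{Sf_j, Vf_j}_{E^*, E} = \EE\Big\langle \sum_j \gamma_j Sf_j,\ \sum_k \gamma_k Vf_k\Big\rangle_{E^*, E}.
\]
Cauchy--Schwarz in $L^2(\O)$, together with the duality pairing $|\inprod{x^*,x}| \le \|x^*\|\|x\|$ and the definition of the $\gamma$-norm, give
\[
|\operatorname{tr}(UR^*S)| \leq \|S\|_{\gamma(H_1, E^*)} \|V\|_{\gamma(H_1,E)} \leq \|R\|_{\gamma(H_0,E)} \|S\|_{\gamma(H_1, E^*)}.
\]
Taking the supremum over $\|U\|\le 1$ yields the bilinear estimate in the finite-rank case.

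For the general case, choose finite-rank $R_n \to R$ in $\gamma(H_0, E)$ and $S_n \to S$ in $\gamma(H_1, E^*)$. Applied to differences, the bilinear estimate shows that $(R_n^* S_n)$ is Cauchy in $S_1(H_1, H_0)$, hence converges in trace norm to some $T \in S_1(H_1, H_0)$. Since $S_1$-convergence implies operator-norm convergence, and $R_n^* S_n \to R^*S$ in operator norm (using $\|\cdot\|_{\calL} \leq \|\cdot\|_{\gamma}$), we identify $T = R^*S$, proving $\inprod{R,S}$ is trace class. The delicate point is the Gaussian coupling that forces $R$ and $S$ (which live on different sides of the duality) to pair against each other cleanly; everything else is approximation and standard trace-class/duality bookkeeping.
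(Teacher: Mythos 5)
Your proposal is correct, and its core estimate is exactly the paper's: the Gaussian coupling $\sum_j \inprod{Sf_j, Vf_j} = \EE\big\langle \sum_j \gamma_j Sf_j, \sum_k \gamma_k Vf_k\big\rangle$ followed by Cauchy--Schwarz in $L^2(\O)$ is precisely the computation in the paper's proof. Where you diverge is in how the pointwise bound is converted into trace-class membership. The paper works with general $R\in\gamma(H_0,E)$, $S\in\gamma(H_1,E^*)$ directly: it fixes two arbitrary orthonormal systems, inserts a sign sequence $(\e_n)$ to make the terms nonnegative, obtains $\sum_n \big|\inprod{\inprod{R,S}u_n, h_n}\big| \leq \|R\|_{\gamma(H_0,E)}\|S\|_{\gamma(H_1,E^*)}$, and then invokes the nuclearity criterion of Diestel--Jarchow--Tonge (Theorem 4.6), which says that uniform control of such sums over orthonormal systems characterizes trace class; no approximation is needed. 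You instead test against contractions $U$ via the duality $\|T\|_{S_1}=\sup_{\|U\|\leq 1}|\operatorname{tr}(UT)|$, absorb $U$ into $R$ by the ideal property (Proposition \ref{gamma-ideal-property}), prove the bilinear bound for finite-rank operators, and then pass to general $R,S$ by a Cauchy argument in $S_1$ together with the elementary inequality $\|\cdot\|_{\calL}\leq\|\cdot\|_{\gamma}$ to identify the limit as $R^*S$. Your route avoids citing the DJT criterion, at the cost of the extra finite-rank reduction, the density step, and the appeal to the ideal property; the paper's route is shorter and directly yields the same quantitative bound $|\inprod{R,S}_{\mathrm{Tr}}|\leq \|R\|_{\gamma(H,E)}\|S\|_{\gamma(H,E^*)}$ that is used immediately after the lemma --- a bound your argument also delivers, since the $S_1$-norm estimate survives the limit. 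One small point worth making explicit in your write-up: the interchange of the infinite sum with the expectation should be justified via partial sums and the $L^2(\O)$-convergence of the two Gaussian series (the same routine justification implicit in the paper's computation).
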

\begin{proof}
Let $(h_n)_{n\geq 1}$ and $(k_n)_{n\geq 1}$ be orthonormal bases for $H_0$ and $H_1$, respectively. Let $(\varepsilon_n)_{n\geq 1}$ be such that $|\varepsilon_n|=1$ and $\lb R h_n, S u_n\rb \varepsilon_n = |\lb R h_n, S u_n\rb|$ for every $n\geq 1$. Then by H\"older's inequality
\begin{align*}
\sum_{n\geq 1} \Big|\Big<\inprod{R,S} u_n, h_n\Big>\Big| & = \sum_{n\geq 1} \lb \varepsilon_n R  h_n, S u_n\rb
\\ & = \EE\Big\langle \sum_{n\geq 1} \varepsilon_n \gamma_n R h_n, \sum_{k\geq 1} \gamma_k S u_k\Big\rangle
\\ & \leq \Big\|\sum_{n\geq 1} \varepsilon_n \gamma_n R  h_n\Big\|_{L^2(\O;E)} \Big\|\sum_{k\geq 1} \gamma_k S u_k\Big\|_{L^2(\O;E^*)}
\\ & = \|R\|_{\gamma(H_0,E)} \|S\|_{\gamma(H_1,E^*)}.
\end{align*}
Now the result follows from \cite[Theorem 4.6]{DJT}.
\end{proof}

We define the {\em trace duality pairing} as
\[\inprod{R, S}_{\textrm{Tr}} = \textrm{Tr}(R^* S) = \sum_{n\geq 1} \inprod{Rh_n, Su_n}.\]
From the calculation in the above proof we see that
$|\inprod{R, S}_{\textrm{Tr}}| \leq \|R\|_{\gamma(H,E)} \|S\|_{\gamma(H,E^*)}$.

Recall the following facts: (details and references on \umd and type can be found in Subsections \ref{subsec:Ornstein} and \ref{sec:stochintprocess}, respectively).
\begin{facts}\label{factsgamma} \
\begin{itemize}
\item If $E$ is a Hilbert space, then $\g(H,E)$ coincides with the Hilbert-Schmidt operators $S^2(H,E)$
\item For all $p\in [1, \infty)$, $\g(H,E)$ is isomorphic to a closed subspace of $L^p(\O;E)$.
\end{itemize}
Moreover, the following properties of $E$ are inherited by $\g(H,E)$: reflexivity, type $p\in [1, 2]$, cotype $q\in[2, \infty]$, \umd, separability.
\end{facts}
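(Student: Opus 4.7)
For the first bullet, if $E$ is a Hilbert space and $R=\sum_{j=1}^n h_j\otimes x_j$ has orthonormal inputs $h_j$, then the independence of the $\gamma_j$ combined with the parallelogram identity in $E$ gives
\[
\EE\Big\|\sum_{j=1}^n \gamma_j x_j\Big\|_E^2 = \sum_{j=1}^n \EE|\gamma_j|^2\,\|x_j\|_E^2 = \sum_{j=1}^n \|x_j\|_E^2 = \|R\|_{S^2(H,E)}^2,
\]
so the $\gamma$- and Hilbert-Schmidt norms coincide on finite rank operators. Since finite rank operators are dense in $S^2(H,E)$ and, by definition, in $\gamma(H,E)$, the two spaces coincide. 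For the second bullet, the map
\[
J:\;\sum_{j=1}^n h_j\otimes x_j \;\mapsto\; \sum_{j=1}^n \gamma_j x_j
\]
is by definition an isometry from finite rank operators into $L^2(\O;E)$; by density it extends to an isometry $J:\gamma(H,E)\hookrightarrow L^2(\O;E)$ with closed image (the image is the closure of the Gaussian chaos of order one in $E$). The Kahane--Khintchine inequality recalled just above the statement then upgrades this to an isomorphism onto a closed subspace of $L^p(\O;E)$ for every $p\in[1,\infty)$.

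For the inheritance properties, I would use the second bullet as the main vehicle: a closed subspace of a Banach space $X$ inherits reflexivity, type $p\in[1,2]$, cotype $q\in[2,\infty]$, the \umd\ property, and separability from $X$. It therefore suffices to check these properties for $L^p(\O;E)$ for a suitable $p\in(1,\infty)$. Separability is standard once $E$ is separable. Reflexivity of $L^p(\O;E)$ for $p\in(1,\infty)$ and $E$ reflexive is classical, as is the fact that $L^p(\O;E)$ has the \umd\ property whenever $E$ does (take any $p\in(1,\infty)$ here). For type $p\in[1,2]$ and cotype $q\in[2,\infty]$ I would take the exponent in the second bullet to be $p$, respectively $q$: then $L^p(\O;E)$ (resp.\ $L^q(\O;E)$) has the same type (resp.\ cotype) as $E$, since the type/cotype constants of $L^r(\O;E)$ coincide with those of $E$ whenever $r$ lies between the relevant exponents, which is the reason for choosing this specific $r$.

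The only step that requires any care is the type/cotype inheritance, where the naive choice $p=2$ is not always enough: one must match the integrability exponent of the Bochner space with the type or cotype exponent to avoid picking up an unwanted smaller type (or larger cotype) coming from $L^p$ itself. Once the right $p$ is selected, everything reduces to the two stability statements (closed subspace, and $L^p(\O;E)$ inherits the property from $E$), both of which are standard and can be found in the references \cite{DJT,Neerven-Radon,KaWe}.
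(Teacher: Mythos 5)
The paper itself offers no proof of these Facts --- they are quoted as standard, with pointers to \cite{DJT,KaWe,Neerven-Radon} --- and your argument is correct and is essentially the standard one from those references: the Gaussian-chaos isometry into $L^2(\O;E)$, Kahane--Khintchine to pass to $L^p(\O;E)$ with closed range, and then stability of the listed properties under passing to closed subspaces of $L^p(\O;E)$. Two small remarks. First, your caution about the exponent in the type/cotype step is unnecessary: since type exponents satisfy $p\le 2$ and cotype exponents satisfy $q\ge 2$, already $L^2(\O;E)$ has type $\min\{2,p\}=p$ and cotype $\max\{2,q\}=q$, so the ``naive'' choice $r=2$ does work; your choice $r=p$ (resp.\ $r=q$) is also fine, except that for $q=\infty$ the embedding into $L^q$ is not available --- but cotype $\infty$ is trivially satisfied by every Banach space, so nothing is lost. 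Second, two points deserve a word of care: the map $J$ must be checked to be well defined (independent of the orthonormal representation of the finite rank operator), which follows from the rotational invariance of Gaussian vectors; and separability of $L^p(\O;E)$ requires the $\sigma$-algebra to be countably generated modulo null sets, which holds here because $\F$ is generated by the isonormal process over the separable space $H$ --- alternatively, separability of $\g(H,E)$ follows directly from the density of finite rank operators built from countable dense subsets of $H$ and $E$, without passing through $L^p(\O;E)$ at all.
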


\subsection{The Malliavin derivative operator}
In this section we recall some of the basic elements of Malliavin calculus. We refer to \cite{Nualart2} for details in the scalar situation.

Let $\{W(h), h\in H\}$ be an {\em isonormal Gaussian process} associated with $H$, that is $\{W h:h\in H\}$ is a centered family of Gaussian random variable and
\[\E (Wh_1 W h_2) = \lb h_1, h_2\rb, \ \ \ \ \ h_1, h_2\in H.\]
We will assume $\mathscr{F}$ is generated by $W$.

Let $1\leq p<\infty$, and let $E$ be a Banach space.
Let us define the {\em Gaussian Sobolev space $\DD^{1,p}(E)$} of $E$-valued random variables in the following way. Consider the class $\mathscr{S}\otimes E$ of {\em smooth $E$-valued random variables} $F:\Omega\to E$ of the form
\[F = f(W(h_1), \ldots, W(h_n)) \otimes x,\]
where $f\in C^\infty_b(\RR^n)$, $h_1,\ldots, h_n\in H$, $x\in E$, and linear combinations thereof. Since $\mathscr{S}$ is dense in $L^p(\O)$ and $L^p(\O)\otimes E$ is dense in $L^p(\O;E)$, it follows that $\mathscr{S} \otimes E$ is dense in $L^p(\Omega;E)$.
For $F\in \mathscr{S}\otimes E$, define the {\em Malliavin derivative $DF$} as the random variable $DF:\Omega\to\gamma(H,E)$ given by \[DF = \sum_{i=1}^n \partial_i f(W(h_1),\ldots, W(h_n)) \otimes (h_i \otimes x).\]
If $E= \R$, we can identify $\g(H,\R)$ with $H$ and in that case for all $F\in \mathscr{S}$, $DF\in L^p(\O;H)$ coincides with the Malliavin derivative in \cite{Nualart2}. Recall from \cite[Proposition 1.2.1]{Nualart2} that $D$ is closable as an operator from $L^p(\O)$ into $L^p(\O;H)$, and this easily extends to the vector-valued setting (see \cite[Proposition $3.3$]{janjan}). For convenience we provide a short proof.

\begin{prop}[Closability]\label{closable}
For all $1\leq p < \infty$, the Malliavin derivative $D$ is closable as an operator from $L^p(\Omega;E)$ into $L^p(\Omega;\gamma(H,E))$.
\end{prop}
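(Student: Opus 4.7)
My plan is to reduce closability in the vector-valued setting to the scalar case (Proposition~1.2.1 of \cite{Nualart2}) by dualizing against elements of $E^*$. The crucial algebraic fact I will use is the identity
$$D\lb F, x^*\rb = (DF)^{\top} x^*, \qquad F\in \mathscr{S}\otimes E,\ x^*\in E^*,$$
where for $R\in \gamma(H,E)$ the vector $R^\top x^*\in H$ is characterized by $\lb R^\top x^*, h\rb_H = \lb Rh, x^*\rb_E$. This is immediate from the explicit formula $DF = \sum_i \partial_i f(W(h_1),\ldots,W(h_n))\otimes (h_i\otimes x)$ and linearity, so I would verify it in one line on generators and extend.

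I would then observe that the map $\Phi_{x^*}:\gamma(H,E)\to H$, $R\mapsto R^\top x^*$, is bounded with $\|\Phi_{x^*}\|\leq \|x^*\|$, since $\|R^\top x^*\|_H\leq \|R\|_{\calL(H,E)}\|x^*\|\leq \|R\|_{\gamma(H,E)}\|x^*\|$; consequently it induces a bounded operator $L^p(\O;\gamma(H,E))\to L^p(\O;H)$. Thus if $F_n\in \mathscr{S}\otimes E$ satisfies $F_n\to 0$ in $L^p(\O;E)$ and $DF_n\to \eta$ in $L^p(\O;\gamma(H,E))$, then for each fixed $x^*\in E^*$ we have $\lb F_n,x^*\rb\to 0$ in $L^p(\O)$ and $D\lb F_n,x^*\rb = \Phi_{x^*}(DF_n)\to \Phi_{x^*}(\eta) = \eta^\top x^*$ in $L^p(\O;H)$. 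Scalar closability then forces $\eta^\top x^* = 0$ almost surely for each $x^*\in E^*$.

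The last step is to promote this pointwise-in-$x^*$ vanishing to $\eta=0$ a.s.\ in $\gamma(H,E)$, and this is the only genuinely subtle point. I would handle it as follows: since $\eta$ is strongly measurable, by Pettis' theorem it is essentially separably valued in $\gamma(H,E)$, so almost surely its image lies in $\gamma(H,E_0)$ for some separable closed subspace $E_0\subseteq E$. Choosing a countable norming set $(x_n^*)\subset E^*$ for $E_0$ and a countable dense $(h_k)\subset H$, the set on which $\lb \eta h_k, x_n^*\rb = 0$ for all $k$ and $n$ has full measure, and by continuity of $\eta(\omega)$ in $h$ together with the norming property this forces $\eta=0$ a.s. The heart of the argument is the commutation identity combined with scalar closability; the separability reduction is the only measure-theoretic ingredient and the main place where one has to be careful.
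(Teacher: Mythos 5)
Your proposal is correct and follows essentially the same route as the paper: both reduce to scalar closability via the identity $\lb (DF)h, x^*\rb = D\lb F,x^*\rb(h)$ and then use strong measurability of the limit to upgrade the weak vanishing to $\eta = 0$. Your write-up merely spells out in more detail the boundedness of $R\mapsto R^\top x^*$ and the separability/norming argument that the paper compresses into the sentence ``since $G$ is strongly measurable, it suffices to check $\lb Gh,x^*\rb=0$.''
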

\begin{proof}
Let $(F_n)_{n\geq 1}$ in $\mathscr{S}\otimes E$ and $G\in L^p(\O;\g(H,E))$ be such that $\limn F_n = 0$ in $L^p(\O;X)$ and $\limn D F_n = G$ in $L^p(\O;\g(H,E))$. We need to show that $G =0$. Since $G$ is strongly measurable, it suffices to check that for any $h\in H$ and $x^*\in E^*$ one has $\lb G h, x^*\rb =0$. By the closability of $D$ in the scalar case one obtains
\[\lb G h, x^*\rb = \limn \lb D F_n h, x^*\rb = \limn D (\lb F_n, x^*\rb) (h) =0.\]
\end{proof}

The closure of the operator $D$ is denoted by $D$ again. The domain of the closure is denoted by $\DD^{1,p}(E)$ and endowed with the norm
\[ \|F\|_{\DD^{1,p}(E)} := (\|F\|^p_{L^p(\Omega;E)} + \|DF\|^p_{L^p(\Omega;\gamma(H,E))})^{1/p}\]
it becomes a Banach space. Similarly, for $k\geq 2$ and $p\geq 1$ we let $\D^{k,p}(E)$ be the closure of $\mathscr {S}\otimes E$ with respect to the norm
\[\|F\|_{\DD^{k,p}(E)} := (\|F\|^p_{L^p(\Omega;E)} + \sum_{i=1}^k\|D^iF\|^p_{L^p(\Omega;\gamma^i(H,E))})^{1/p}.\]
Here $\gamma^1(H,E) = \gamma(H, E)$ and recursively, $\gamma^n(H,E) = \gamma(H, \gamma^{n-1}(H,E))$ for $n\geq 2$.
Finally let $\DD^{\infty,p}(E) = \bigcap_{k\geq 1} \DD^{k,p}(E)$.

\subsection{Ornstein-Uhlenbeck operators and Meyer's inequalities\label{subsec:Ornstein}}
In this subsection we recall several results from \cite{janmaas}  and \cite{Nualart2}.

Recall the definition of the {\em $n$-th Wiener chaos}
\[ \mathcal{H}_n := \overline{\textrm{lin}}\{H_n(W(h)):\ \|h\|=1\}.\]
Here, $H_n$ is the $n$-th Hermite polynomial. Also let $\mathscr{P}$ be the set of random variable of the form $p(W(h_1), \ldots W(h_n))$ where $p$ is a polynomial and $h_1, \ldots, h_n\in H$. This set is dense in $L^p(\O)$ for all $p\in [1, \infty)$ (see \cite[Exercise 1.1.7]{Nualart2}).
A classical result is the following orthogonal decomposition $L^2(\Omega) = \bigoplus_{n\geq 0} \mathcal{H}_n$ (see \cite[Theorem 1.1.1]{Nualart2}).
For each $n\geq 1$, let $J_n\in \calL(L^2(\O))$ be the orthogonal projection onto $\mathcal{H}_n$.
The {\em Ornstein-Uhlenbeck semigroup} $(P(t))_{t\geq 0}$ on $L^2(\Omega)$ is defined by
\[P(t) := \sum_{n=0}^\infty e^{-nt} J_n.\]
Clearly, $P(t)^* = P(t)$.
Moreover, $(P(t))_{t\geq 0}$ extends to a $C_0$-semigroup of positive contractions on $L^p(\Omega)$ for all $1\leq p<\infty$ (see \cite[Section 1.4]{Nualart2}).

Let $E$ be a Banach space. By positivity, for any $t\geq0$ the mapping $P(t) \otimes I_E$ extends to a contraction $P_E(t)\in \calL(L^p(\O;E))$. Moreover, $(P_E(t))_{t\geq 0}$ on $L^p(\Omega;E)$ is a $C_0$-semigroup. Also note that $\mathscr{P}\otimes E$ is dense in $L^p(\O;E)$.
Denote the generator of $(P_E(t))_{t\geq 0}$ by $L_E$, and set $C_E := -\sqrt{-L_E}$.
Observe that for all $F\in \mathscr{P}\otimes E$ one has $C_E F = \sum_{n\geq 0} \sqrt{n} J_n F$. Whenever, there is no danger of confusion we will leave out the subscript $E$ from all these expressions.
If $E$ does not contain $\ell^1_n$'s uniformly, then $P_E(t)$ is an analytic semigroup on $L^p(\Omega;E)$ for all $p\in (1, \infty)$. In this case each $J_n$ is a bounded operator on $L^p(\O;E)$ for all $p\in (1, \infty)$. We refer to \cite[Theorem $5.5$, Remark $5.9(ii)$, Identity $(5.15)$]{Pisier} for details.

Next we recall the vector-valued analagon of Meyer's inequalities from \cite[Theorem 6.8]{janmaas}. To do so we need the following Banach space property.
A Banach space $E$ is said to have {\em \umd} if for
some $p\in (1, \infty)$, there is a constant $\beta_{p,E}$ such that for every $N\geq 1$, every martingale difference sequence\index{martingale
difference sequence} $(d_n)_{n=1}^N$ in $L^p(\Omega, E)$ and every $\{-1,
1\}$-valued sequence $(\e_n)_{n=1}^N$, we have
\begin{equation}\label{eq:UMD}
\Big(\E\Big\|\sum_{n=1}^N \e_n d_n \Big\|^p\Big)^\frac1p \leq \beta_{p,X}
\Big(\E\Big\|\sum_{n=1}^N d_n\Big\|^p\Big)^\frac1p.
\end{equation}
\umd stands for unconditional martingale differences. It can be shown that if \eqref{eq:UMD} holds for some $p\in (1, \infty)$, then one can show that it holds for all $p\in (1, \infty)$. We refer to \cite{Bu3} for details. The \umd property plays an important role in vector-valued harmonic analysis, due to the fact that the Hilbert transform is bounded if and only if $E$ is a \umd space.
We will also use that the \umd property implies several other useful Banach space properties.
If $X$ is a \umd space, then it is reflexive, and hence spaces such as $C(K)$, $L^1$, $L^\infty$ do not have \umd. In the reflexive range many of the classical spaces (Lebesgue space, Sobolev spaces, Besov spaces, Orlicz spaces, Schatten class, etc) are known to be \umd. In applications to SPDEs the most important example is $L^q$ with $q\in (1, \infty)$.

The case $n=1$ of the following result was proved in \cite{PisierMeyer} and used in \cite[Theorem 6.8]{janmaas} to derive the case $n\geq 2$ by induction.
\begin{thrm}[Meyer's inequalities]\label{Meyer's inequalities}
Let $E$ be a \umd Banach space, let $1<p<\infty$ and $n\geq 1$. Then the domain of the operator $C^n$ on $L^p(\Omega;E)$ equals $\DD^{n,p}(E)$. Moreover, for all $F\in \DD^{n,p}(E)$ we have
\begin{align*}
\|D^nF\|_{L^p(\Omega;\gamma^n(H,E))} &\lesssim_{p,E,n} \|C^n F\|_{L^p(\Omega;E)}
\lesssim_{p,E,n} \|F\|_{L^p(\Omega;E)} + \|D^nF\|_{L^p(\Omega;\gamma^n(H,E))}.
\end{align*}
\end{thrm}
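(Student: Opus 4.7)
I would induct on $n$, taking the Pisier--Meyer inequality as the base case $n=1$. The structural input enabling the induction is that UMD is inherited by $\gamma(H,E)$ (Facts~\ref{factsgamma}), so the $n=1$ case is available verbatim with $E$ replaced by any $\gamma^{k}(H,E)$.

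For the step from $n-1$ to $n$, I would regard $D^{n-1}F$ as a smooth random variable in the UMD space $\gamma^{n-1}(H,E)$ and apply the base case there to obtain
\[
\|D^n F\|_{L^p(\Omega;\gamma^n(H,E))} \lesssim_{p,E,n} \|C_{\gamma^{n-1}} D^{n-1} F\|_{L^p(\Omega;\gamma^{n-1}(H,E))} \lesssim_{p,E,n} \|D^{n-1} F\| + \|D^n F\|.
\]
Combined with the induction hypothesis $\|D^{n-1}F\|\lesssim\|C^{n-1}F\|$, the problem reduces to comparing $\|C_{\gamma^{n-1}}D^{n-1}F\|$ with $\|C^n F\|$ up to lower-order terms.

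This comparison hinges on the commutation between $D$ and the Ornstein--Uhlenbeck generator. Since $D$ sends $\mathcal{H}_k\otimes E$ into $\mathcal{H}_{k-1}\otimes\gamma(H,E)$ while $L=-kI$ on $\mathcal{H}_k$, a chaos-wise check on $\mathscr{P}\otimes E$ gives $L_\gamma D = D(L+I)$, whence by iteration
\[
C_{\gamma^{n-1}}^2 D^{n-1} = D^{n-1}\bigl(C^2 - (n-1)I\bigr).
\]
Because $D^{n-1}$ annihilates the chaoses of order $<n-1$, one may take a square root on the surviving part and write $C_{\gamma^{n-1}}D^{n-1} = D^{n-1}\sqrt{C^2-(n-1)I}$. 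Applying the induction hypothesis in $E$ to $G:=\sqrt{C^2-(n-1)I}\,F$ and then the chaos-level bound $C^{n-1}\sqrt{C^2-(n-1)I}\leq C^n$ (visible from $k^{(n-1)/2}\sqrt{k-n+1}\leq k^{n/2}$) yields the upper bound $\|D^nF\|\lesssim\|C^nF\|$. For the matching lower bound, I would apply the base case in $E$ to $C^{n-1}F$ to get $\|C^nF\|\lesssim\|C^{n-1}F\|+\|DC^{n-1}F\|$, use the companion identity $\tilde C_\gamma D = DC$ (with $\tilde C = (I-L)^{1/2}$, verified chaos-wise) to rewrite $DC^{n-1}F = \tilde C_\gamma^{n-1}DF$, and then invoke the induction hypothesis in $\gamma(H,E)$ together with a standard interpolation collapsing the intermediate norms $\|D^iF\|$, $1\leq i\leq n-1$, into $\|F\|+\|D^nF\|$.

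The principal obstacle is promoting these algebraic identities from the polynomial class $\mathscr{P}\otimes E$ to bona fide $L^p$-inequalities between fractional powers of $-L$: this requires a bounded functional calculus for $-L$ on $L^p(\Omega;E)$, which is available precisely because UMD excludes uniform $\ell^1_n$'s, making $(P_E(t))_{t\geq 0}$ analytic and each spectral projector $J_k$ bounded on $L^p(\Omega;E)$, as recorded just before the statement.
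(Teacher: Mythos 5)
The paper itself does not prove this theorem: it is recalled from \cite[Theorem 6.8]{janmaas}, with the base case $n=1$ due to \cite{PisierMeyer}, and your induction on $n$ --- base case applied in each $\gamma^{k}(H,E)$ (UMD being inherited, Facts~\ref{factsgamma}), together with the commutation $L_\gamma D = D(L+I)$ and its iterates --- is precisely the scheme that reference carries out, so in substance you are reconstructing the intended proof. One justification, however, needs repair: passing from your chaos-wise symbol inequalities (e.g.\ $k^{(n-1)/2}\sqrt{k-n+1}\leq k^{n/2}$, or the comparison of $\tilde C_\gamma^{\,n-1}$ with $I+C_\gamma^{\,n-1}$) to operator bounds on $L^p(\Omega;E)$ does not follow from analyticity of $(P_E(t))_{t\geq0}$ together with boundedness of the individual projections $J_k$; in a general Banach space that combination gives no control over infinite linear combinations $\sum_k\phi(k)J_k$, so the appeal to ``a bounded functional calculus for $-L$'' is an overstatement. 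What actually closes the argument is that every multiplier you need is of the form $\phi(k)=\sum_{j\geq0}a_jk^{-j}$ for $k\geq N$ (e.g.\ $\sqrt{1-(n-1)/k}$ or $(1+1/k)^{(n-1)/2}$), so Meyer's Multiplier Theorem (Theorem~\ref{Meyer's Multiplier Theorem}), which is where UMD genuinely enters, applies; with that substitution your plan is sound.
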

Also recall the following vector-valued version of Meyer's Multiplier Theorem (see \cite[Theorem 6.5]{janmaas}, where even an operator-valued version has been obtained).
\begin{thrm}[Meyer's Multiplier Theorem]\label{Meyer's Multiplier Theorem}
Let $1<p<\infty$. Let $E$ be a \umd Banach space, and let $(a_k)_{k=0}^\infty$ be a sequence of real numbers such that $\sum_{k=0}^\infty |a_k|N^{-k} < \infty$ for some $N \geq 1$. If a sequence of scalars $(\phi(n))_{n\geq 1}$ satisfies $\phi(n) := \sum_{k=0}^\infty a_k n^{-k}$ for $n\geq N$, then the operator $T_\phi$ defined by
\[ T_\phi F := \sum_{n=0}^\infty \phi(n)J_nF, \qquad F\in \mathscr{P}\otimes E,\]
extends to a bounded operator on $L^p(\Omega;E)$.
\end{thrm}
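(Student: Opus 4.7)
The plan is to split $T_\phi$ into a ``low chaos'' piece, which is handled directly by boundedness of the individual projections $J_n$, plus a tail that I want to represent as an operator-norm convergent series of powers of a single, spectrally small operator.

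First I would set $Q_N := I - \sum_{n=0}^{N-1} J_n$ and write $T_\phi = \sum_{n=0}^{N-1} \phi(n) J_n + T_\phi Q_N$. Since the \umd property of $E$ implies that $E$ does not contain $\ell^1_n$ uniformly, each $J_n$ is bounded on $L^p(\Omega;E)$ (cf.\ the remarks immediately preceding the theorem), so the first summand is a finite linear combination of bounded operators. To handle $T_\phi Q_N$, I insert the expansion $\phi(n) = \sum_{k \geq 0} a_k n^{-k}$, valid for $n \geq N$. For $F \in \mathscr{P} \otimes E$ the sum over $n$ is finite, so I may interchange it with the sum over $k$ and write
\[ T_\phi Q_N F = a_0 Q_N F + \sum_{k=1}^\infty a_k S_k F, \qquad S_k := \sum_{n \geq N} n^{-k} J_n. \]
The target is then the uniform bound $\|S_k\|_{L^p(\Omega;E) \to L^p(\Omega;E)} \leq M N^{-k}$ with $M$ independent of $k$, since the hypothesis $\sum_k |a_k| N^{-k} < \infty$ then forces operator-norm convergence of $\sum_k a_k S_k$.

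To estimate $S_k$ I would use the classical identity $n^{-k} = \Gamma(k)^{-1}\int_0^\infty t^{k-1} e^{-nt}\,dt$. Combined with $P(t)Q_N F = \sum_{n\geq N} e^{-nt} J_n F$ (a finite sum for polynomial $F$), this gives
\[ S_k F = \frac{1}{\Gamma(k)} \int_0^\infty t^{k-1} P(t) Q_N F \, dt \qquad (F \in \mathscr{P} \otimes E). \]
The whole argument then reduces to the spectral-gap estimate
\[ \|P(t) Q_N\|_{L^p(\Omega;E)\to L^p(\Omega;E)} \leq M e^{-Nt}, \qquad t \geq 0, \]
from which $\|S_k F\|_{L^p} \leq M N^{-k} \|F\|_{L^p}$ drops out by integrating the gamma kernel against $e^{-Nt}$. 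Density of $\mathscr{P}\otimes E$ in $L^p(\Omega;E)$ and boundedness of $Q_N$ then permit the extension of $T_\phi$ to all of $L^p(\Omega;E)$.

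The main obstacle is the spectral-gap estimate. In scalar $L^2$ it is a one-line Pythagorean calculation, but in $L^p(\Omega;E)$ with $E$ only \umd one must exploit the geometry of $E$ more seriously. My route is to use that, since $E$ does not contain $\ell^1_n$ uniformly, $(P_E(t))_{t\geq 0}$ is an \emph{analytic} semigroup on $L^p(\Omega;E)$. Because $Q_N$ is a spectral projection of $L$ onto the invariant subspace corresponding to the eigenvalues $\{-n : n \geq N\}$, the restriction of $L$ to $Q_N L^p(\Omega;E)$ has its spectrum in $(-\infty, -N]$, and the analytic-semigroup resolvent estimates together with a Dunford contour drawn around $\{-N, -N-1, \ldots\}$ yield the bound $\|P(t) Q_N\| \leq M e^{-Nt}$. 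Once this is in place, Bochner integrability of $t \mapsto t^{k-1} P(t) Q_N F$ on $(0,\infty)$ and the various interchanges of sums and integrals are routine.
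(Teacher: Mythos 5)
Your overall skeleton is exactly the standard one (it is how the scalar case in Nualart is proved and how the cited source \cite[Theorem 6.5]{janmaas} proceeds; note the paper itself does not prove this theorem but quotes it): split off the chaoses below $N$ using boundedness of the individual $J_n$, expand $\phi(n)$ into the series in $n^{-k}$, reduce to the uniform bound $\|S_k\|\leq M N^{-k}$ via the Gamma-integral representation, and reduce that to the decay estimate $\|P(t)Q_N\|_{\calL(L^p(\Omega;E))}\lesssim e^{-Nt}$. All of those reductions are fine (for $F\in\mathscr{P}\otimes E$ the sums over $n$ are finite, so the interchanges are legitimate). The genuine gap is in your justification of the decay estimate itself. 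You assert that ``$Q_N$ is a spectral projection of $L$ onto the invariant subspace corresponding to the eigenvalues $\{-n:n\geq N\}$'' and hence that the restriction of $L_E$ to $Q_NL^p(\Omega;E)$ has spectrum in $(-\infty,-N]$. This is not established by anything you have said: knowing that the polynomials in the range of $Q_N$ are spanned by eigenvectors with eigenvalues $-n$, $n\geq N$, does not identify the spectrum of the generator on the closed subspace $Q_NL^p(\Omega;E)$. The candidate resolvent $\sum_{n\geq N}(\lambda+n)^{-1}J_n$ does not converge in operator norm (the norms $\|J_n\|_{\calL(L^p(\Omega;E))}$ grow exponentially, and the chaos decomposition is not unconditional in $L^p$ for $p\neq 2$), and the Laplace-transform formula for the resolvent in $\mathrm{Re}\,\lambda>-N$ presupposes precisely the decay you are trying to prove, so the argument as written is circular. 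In the scalar case the gap estimate comes from hypercontractivity combined with $L^2$-orthogonality of the chaoses; the $L^2$ step is unavailable for general \umd $E$, which is why a genuinely vector-valued input is needed here.

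There are two further, smaller issues and a way to repair the argument along lines close to your own. First, even granting the spectrum location, ``spectral bound $=$ growth bound'' for analytic semigroups only yields $\|P(t)Q_N\|\leq M_\e e^{(-N+\e)t}$; to reach the endpoint rate one should either split off $J_N$ (on whose range $P(t)$ acts as multiplication by $e^{-Nt}$) or simply enlarge $N$ by one, which is harmless since $\sum_k|a_k|(N+1)^{-k}<\infty$ as well — a contour-shift argument giving $e^{-Nt}$ directly would additionally require resolvent bounds up to the line $\mathrm{Re}\,\lambda=-N$, which do not come for free. Second, the correct source of the decay is the quantitative form of Pisier's analyticity result quoted just before the theorem: since $E$ does not contain $\ell^1_n$'s uniformly, $P_E$ extends to a bounded analytic semigroup on a sector, which means the operators $U_w:=\sum_{n\geq 0}w^nJ_n$ are uniformly bounded on a disc $\{|w|\leq r_0\}$ (with $r_0>0$ depending on $p$ and $E$). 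A Cauchy/Laurent-tail estimate, writing $\sum_{n\geq N}e^{-nt}J_n$ as a contour integral of $U_w$ over $|w|=r<r_0$, then gives $\|P(t)Q_N\|\lesssim_{N,p,E}(e^{-t}/r)^N$ for $e^{-t}<r$, while for $t$ in the remaining bounded interval the trivial bound $\|P(t)Q_N\|\leq\|Q_N\|$ suffices; together these yield $\|P(t)Q_N\|\leq C_{N,p,E}\,e^{-Nt}$ for all $t\geq 0$, after which your computation of $\|S_k\|\leq C N^{-k}$ and the density argument go through verbatim. Until the decay estimate is proved by some such argument, the proof is incomplete at its central step.
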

Due to the above results, many of the results in the scalar setting can be extended to the \umd-valued setting. Some results have already been derived in \cite{janmaas}, and we will obtain several other results which will be needed to present some of the tools in Malliavin calculus in the \umd setting.

The following density result is a consequence of the corresponding result in the scalar case. It will play a minor role in the sequel.
\begin{prop}\label{prop:denseP}
Let $E$ be a Banach space, $p\in [1,\infty)$ and $k\geq 1$. Then $\mathscr{P}\otimes E$ is dense in $\D^{k,p}(E)$.
\end{prop}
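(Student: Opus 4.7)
The plan is to reduce everything to the scalar case. By the very definition of $\D^{k,p}(E)$ (for $k=1$ as the domain of the closure of $D$, and for $k\geq 2$ as the closure of $\mathscr{S}\otimes E$ in the appropriate graph norm), the subspace $\mathscr{S}\otimes E$ is dense in $\D^{k,p}(E)$. Therefore it suffices to approximate an arbitrary element of $\mathscr{S}\otimes E$ by elements of $\mathscr{P}\otimes E$ in the $\D^{k,p}(E)$-norm.

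The key observation is that for an elementary tensor $\phi\otimes x$ with $\phi\in \mathscr{S}$ and $x\in E$ one has $D^i(\phi\otimes x) = (D^i\phi)\otimes x$, viewed as elements of $\gamma^i(H,E)$ in the natural way. Combined with the fact (already used in the definition of $\gamma(H,E)$ via finite-rank operators) that the $\gamma$-norm of a rank-one operator factorises as a product, this yields
\[
\|D^i(\phi\otimes x)\|_{L^p(\Omega;\gamma^i(H,E))} = \|x\|_E\,\|D^i\phi\|_{L^p(\Omega;\gamma^i(H,\R))},
\]
and consequently
\[
\|\phi\otimes x\|_{\D^{k,p}(E)} \leq \|\phi\|_{\D^{k,p}(\R)}\,\|x\|_E.
\]

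Now take $F=\sum_{j=1}^m \phi_j\otimes x_j \in \mathscr{S}\otimes E$. Invoking the scalar density result (e.g.\ \cite[Exercise 1.1.7 and the discussion preceding Proposition 1.2.1]{Nualart2}), for each $j$ choose a sequence $(p_{j,n})_{n\geq 1}$ in $\mathscr{P}$ with $p_{j,n}\to \phi_j$ in $\D^{k,p}(\R)$. Setting $F_n := \sum_{j=1}^m p_{j,n}\otimes x_j \in \mathscr{P}\otimes E$, the above factorisation inequality gives
\[
\|F-F_n\|_{\D^{k,p}(E)} \leq \sum_{j=1}^m \|\phi_j - p_{j,n}\|_{\D^{k,p}(\R)} \|x_j\|_E \longrightarrow 0
\]
as $n\to\infty$, which proves the claim.

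The proof contains no real obstacle; its entire content is the reduction to the scalar density of $\mathscr{P}$ in $\D^{k,p}(\R)$, which is classical. The one point worth checking carefully is the identity $D^i(\phi\otimes x)=(D^i\phi)\otimes x$, but this follows directly from the definition of $D$ on $\mathscr{S}\otimes E$, the recursive definition $\gamma^i(H,E)=\gamma(H,\gamma^{i-1}(H,E))$, and the multiplicativity of the $\gamma$-norm on rank-one operators.
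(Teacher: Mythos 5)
Your proof is correct and follows essentially the same route as the paper: both reduce to the classical scalar fact that $\mathscr{P}$ is dense in $\D^{k,p}(\R)$ (the paper cites \cite[Theorem 15.108]{Jan97} and \cite[Corollary 1.5.1]{Nualart2}) and then transfer this to $\mathscr{S}\otimes E$ via the elementary-tensor structure. The only difference is that you make explicit the cross-norm identity $\|D^i(\phi\otimes x)\|_{L^p(\Omega;\gamma^i(H,E))}=\|x\|_E\,\|D^i\phi\|_{L^p(\Omega;\gamma^i(H,\R))}$, which the paper leaves implicit in the phrase ``the result follows from the definition of $\D^{k,p}(E)$.''
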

\begin{proof}
It follows from \cite[Theorem 15.108]{Jan97} and \cite[Corollary 1.5.1]{Nualart2} that $\mathscr{P}$ is dense in $\D^{k,p}(\R)$.
Since $\D^{k,p}(\R)\otimes E$ contains the dense subspace $\mathscr{S}\otimes E$, the result follows from the definition of $\D^{k,p}(E)$.
\end{proof}

Below, the space $\DD^{1,p}(\gamma(K,E))$, where $K$ is an arbitrary Hilbert space, will play an important role for the divergence operator $\delta$.
The following result is a direct consequence of the  $\gamma$-Fubini lemma \ref{sec:gamma,Fubini-lemma}.
\begin{prop}\label{Malliavin-Fubini-isomorphism}
Let $K$ be a separable Hilbert space, $E$ be a Banach space and let $1\leq p<\infty$. Then the map ${\rm Fub}: \DD^{1,p}(\gamma(K,E)) \to \g(K,\DD^{1,p}(E))$ defined by \[(({\rm Fub}F)k)(\omega) := F(\omega)k, \qquad \omega\in \Omega, k\in H,\]
is an isomorphism $\DD^{1,p}(\gamma(H,E))\eqsim \gamma(H,\DD^{1,p}(E))$. Here
\[(D [({\rm Fub}F)k]) h = ((DF)h)k,  \  \ \ \ \  h\in H, \ k\in K. \]
\end{prop}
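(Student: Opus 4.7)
The plan is to define ${\rm Fub}$ on the dense subspace $\mathscr{S}\otimes\gamma(K,E)$, verify the derivative formula and a two-sided norm estimate there, and then extend by density. The core ingredient is the $\gamma$-Fubini Lemma \ref{sec:gamma,Fubini-lemma}, applied to both the function and its Malliavin derivative, together with the symmetry of iterated $\gamma$-radonifying spaces.

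For $F=\sum_{i=1}^N f_i\otimes R_i\in\mathscr{S}\otimes\gamma(K,E)$ with $f_i\in\mathscr{S}$ and $R_i\in\gamma(K,E)$, the element $({\rm Fub}F)k=\sum_i f_i\,(R_i k)$ lies in $\mathscr{S}\otimes E\subset\DD^{1,p}(E)$, and a direct computation of the scalar Malliavin derivative yields $D[({\rm Fub}F)k](h)=\sum_{i,j}\partial_j f_i(W(h_1^i),\ldots)\lb h_j^i,h\rb R_i k=((DF)h)(k)$, which establishes the derivative formula of the statement on the dense subspace. To prove the norm equivalence, I invoke Kahane--Khintchine to replace the exponent $2$ by $p$ in the defining $\gamma$-norm on $\gamma(K,\DD^{1,p}(E))$ and split $\|\cdot\|_{\DD^{1,p}(E)}^p$ into its two parts:
\begin{align*}
\|{\rm Fub}F\|_{\gamma(K,\DD^{1,p}(E))}^p &\eqsim_p \E_\gamma\Big\|\sum_m\gamma_m F(\cdot)k_m\Big\|_{L^p(\O;E)}^p + \E_\gamma\Big\|\sum_m\gamma_m D[F(\cdot)k_m]\Big\|_{L^p(\O;\gamma(H,E))}^p.
\end{align*}
The first summand is $\eqsim_p\|F\|_{L^p(\O;\gamma(K,E))}^p$ by a direct application of Lemma \ref{sec:gamma,Fubini-lemma}. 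For the second, I swap $\E_\gamma$ and the $L^p(\O)$-integration by Fubini, use the pointwise identity $D[F(\cdot)k_m](\omega)(h)=(DF(\omega)h)(k_m)$ to recognize the resulting inner $L^p_\gamma$-norm as $\eqsim_p\|(DF)(\omega)\|_{\gamma(K,\gamma(H,E))}$, apply the symmetry $\gamma(H,\gamma(K,E))\cong\gamma(K,\gamma(H,E))$, and integrate in $\omega$ to obtain $\eqsim_p\|DF\|_{L^p(\O;\gamma(H,\gamma(K,E)))}^p$.

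Because $\mathscr{S}\otimes\gamma(K,E)$ is dense in $\DD^{1,p}(\gamma(K,E))$ by definition, the norm equivalence allows me to extend ${\rm Fub}$ to a bounded injection on the full space, with the derivative formula preserved by continuity. For surjectivity, finite-rank operators $\sum_m k_m\otimes F_m$ with $F_m\in\mathscr{S}\otimes E$ lie in the image of ${\rm Fub}$ and are dense in $\gamma(K,\DD^{1,p}(E))$ (combine density of finite-rank operators in $\gamma(K,\cdot)$ with density of $\mathscr{S}\otimes E$ in $\DD^{1,p}(E)$); a Cauchy-sequence argument in $\DD^{1,p}(\gamma(K,E))$ via the norm equivalence then produces the required preimage of any target $T\in\gamma(K,\DD^{1,p}(E))$. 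The main obstacle is the symmetry $\gamma(H,\gamma(K,E))\cong\gamma(K,\gamma(H,E))$ with matching constants; this is a standard fact for iterated $\gamma$-spaces, obtained by expanding both norms as $\E\|\sum_{n,m}\gamma_n\gamma'_m (DF)(h_n)(k_m)\|_E^2$ relative to orthonormal bases of $H$ and $K$ and interchanging the two independent Gaussian sequences by Fubini, but it deserves an explicit invocation rather than being used silently.
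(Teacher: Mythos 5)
Your proof is correct and follows essentially the same route as the paper, which states the proposition without detail as ``a direct consequence of the $\gamma$-Fubini lemma''; you simply carry out that derivation explicitly (verification on $\mathscr{S}\otimes\gamma(K,E)$, Kahane--Khintchine, the $\gamma$-Fubini lemma applied to both $F$ and $DF$, the symmetry $\gamma(H,\gamma(K,E))\cong\gamma(K,\gamma(H,E))$ which the paper itself later introduces as the isomorphism $I_{H_1,H_2}$, and a density argument for surjectivity). No gaps.
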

In particular, this result holds for the case $K  = H$.% and this situation leads to notational difficulties.

\section{Results on Malliavin derivatives}

\subsection{Poincar\'e inequality and its consequences}

The following Poincar\'e inequality will be useful to us. A similar result for \umd spaces was obtained  \cite[Theorem 1]{NaSch} in the discrete setting using entirely different methods. We extend the scalar-valued proof from \cite[Proposition 1.5.8]{Nualart2}.

\begin{prop}[Poincar\'e inequality]\label{prop:poincare}
Let $E$ be a \umd space and let $p\in (1, \infty)$. For all $u\in \D^{1,p}(\O;E)$ one has
\begin{align*}
\|u-\EE(u)\|_{L^p(\Omega;E)} \lesssim_{p,E} \|Du\|_{L^p(\Omega;\gamma(H,E))},
\end{align*}
\end{prop}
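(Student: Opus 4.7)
The plan is to adapt the scalar proof of \cite[Proposition~1.5.8]{Nualart2}. By Proposition~\ref{prop:denseP}, it suffices to treat $F\in\mathscr{P}\otimes E$; for such $F$, I will establish the representation $F - \EE F = \delta(u)$ with $u := (I - L_{\g(H,E)})^{-1} DF$, and then bound $\|\delta u\|_p$ by the vector-valued Meyer inequality for the divergence.

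For the representation: since $F$ has a finite Wiener chaos expansion, $P(t) F \to J_0 F = \EE F$ in $L^p(\Omega;E)$ as $t\to\infty$, so
\[
F - \EE F = -\int_0^\infty \tfrac{d}{dt} P(t) F \,dt = \int_0^\infty (-L) P(t) F \,dt.
\]
Combining the identity $-L = \delta\circ D$ on $\mathscr{P}\otimes E$ with the intertwining relation $DP(t) = e^{-t} P(t) D$ (verified chaos by chaos using $D\mathcal{H}_n\subset \mathcal{H}_{n-1}\otimes H$) yields
\[
F - \EE F = \int_0^\infty e^{-t}\,\delta\bigl(P(t)DF\bigr)\,dt = \delta\Bigl(\int_0^\infty e^{-t} P(t)DF\,dt\Bigr) = \delta(u),
\]
where pulling $\delta$ out is justified for polynomial $F$ since the integrand is a finite sum of terms in $\mathcal{H}_n\otimes\g(H,E)$. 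Because $\g(H,E)$ is \umd by Facts~\ref{factsgamma}, the resolvent $(I - L_{\g(H,E)})^{-1} = \int_0^\infty e^{-t} P(t)\,dt$ is well defined and contractive on $L^p(\Omega;\g(H,E))$.

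To estimate $u$ in $\D^{1,p}(\g(H,E))$: clearly $\|u\|_{L^p}\leq \|DF\|_{L^p}$. For $\|Du\|_{L^p}$, combine Meyer's inequality (Theorem~\ref{Meyer's inequalities}, $n=1$, applied in the \umd space $\g(H,E)$) with the analytic-semigroup estimate $\|CP(t)\|_{L^p\to L^p}\lesssim_{p,E} t^{-1/2}$, valid because the Ornstein--Uhlenbeck semigroup is analytic on $L^p$-spaces of \umd-valued functions (see Subsection~\ref{subsec:Ornstein}); this gives
\begin{align*}
\|Du\|_p &\leq \int_0^\infty e^{-t}\|D(P(t)DF)\|_p\,dt \\
&\lesssim_{p,E} \int_0^\infty e^{-t}\|CP(t)DF\|_p\,dt \;\lesssim_{p,E}\; \|DF\|_p\int_0^\infty e^{-t} t^{-1/2}\,dt \;\lesssim_{p,E}\; \|DF\|_p.
\end{align*}
Hence $\|u\|_{\D^{1,p}(\g(H,E))}\lesssim_{p,E} \|DF\|_{L^p(\Omega;\g(H,E))}$.

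Finally, the vector-valued Meyer inequality for $\delta$ in the \umd setting---$\|\delta v\|_{L^p(\Omega;E)}\lesssim_{p,E} \|v\|_{\D^{1,p}(\g(H,E))}$, established in \cite{janmaas}---applied to $v=u$ gives $\|F-\EE F\|_p = \|\delta u\|_p \lesssim_{p,E} \|DF\|_p$, and density of $\mathscr{P}\otimes E$ extends the bound to all $F\in\D^{1,p}(E)$. The main obstacle is the Meyer-type bound on $\delta$: it is the deep input that encodes the \umd property (via martingale inequalities and the Meyer multiplier theorem), and without it one cannot close the argument. The remaining steps are Ornstein--Uhlenbeck semigroup bookkeeping that extend the scalar proof essentially verbatim once Theorem~\ref{Meyer's inequalities} and the analyticity of $P(t)$ are available in the \umd-valued framework.
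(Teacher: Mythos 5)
Your argument is correct, but it follows a genuinely different route from the one in the paper. The paper stays entirely on the ``derivative side'': it introduces the multiplier $R=\sum_{n\geq 1}\sqrt{1+1/n}\,J_n$, proves its $L^p$-boundedness via Meyer's Multiplier Theorem (Theorem \ref{Meyer's Multiplier Theorem}), observes the identity $u-\E u=(I-L)^{-1/2}RCu$ on $\mathscr{S}\otimes E$, and concludes with Meyer's inequalities in the form $\|Cu\|_{L^p(\O;E)}\lesssim_{p,E}\|Du\|_{L^p(\O;\g(H,E))}$; no divergence operator and no semigroup decay estimates appear. You instead prove the divergence representation $F-\E F=\delta\bigl((I-L_{\g(H,E)})^{-1}DF\bigr)$ on $\mathscr{P}\otimes E$ (the vector-valued version of the classical scalar argument via $\delta D=-L$), estimate the argument of $\delta$ in $\D^{1,p}(\g(H,E))$ using the contractivity of the resolvent, Meyer's inequality applied in $\g(H,E)$, and the analytic bound $\|CP(t)\|\lesssim_{p,E}t^{-1/2}$, and finish with the continuity of $\delta:\D^{1,p}(\g(H,E))\to L^p(\O;E)$ (Proposition \ref{delta-continuity-thrm}, i.e.\ \cite{janmaas}). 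Two remarks: first, there is no circularity, since Proposition \ref{delta-continuity-thrm} is taken from \cite{janmaas} and does not use the Poincar\'e inequality (only Proposition \ref{div-norm-estimate}, proved later, does); second, your appeals to Theorem \ref{Meyer's inequalities} and to analyticity in $L^p(\O;\g(H,E))$ are legitimate precisely because $\g(H,E)$ inherits \umd from $E$ (Facts \ref{factsgamma}), and it is worth saying this explicitly, also to record that the implied constants depend on $E$ only through $\g(H,E)$. What the two approaches buy: the paper's proof is shorter and uses a single application of Meyer's inequalities plus the multiplier theorem, whereas yours invokes the deeper boundedness of the divergence but yields as a by-product the explicit representation $u-\E u=\delta\bigl((I-L)^{-1}Du\bigr)$, which is useful in its own right; you could also replace the $t^{-1/2}$ analyticity estimate by a direct application of Theorem \ref{Meyer's Multiplier Theorem} to the multiplier $n\mapsto\sqrt{n}/(1+n-1)$ arising from $C_{\g(H,E)}(I-L_{\g(H,E)})^{-1}$ on the chaoses of $DF$, which would bring your argument even closer in spirit to the paper's. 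In both cases the \umd hypothesis enters only through the Meyer-type inequalities, as you correctly identify.
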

\begin{proof}
Let $R$ be the operator on $L^p(\Omega;E)$ defined by $R = \sum_{n=1}^\infty \sqrt{1+1/n} J_n$. We will prove that $R$ is bounded, using Theorem \ref{Meyer's Multiplier Theorem}. With $\phi(n) := \sqrt{1+1/n}$, observe that
\[\phi(n) = \sum_{k=0}^\infty {1/2\choose k} n^{-k} = \sum_{k=0}^\infty \frac{(1/2)(1/2-1)\ldots (1/2-k+1)}{k!}n^{-k}.\]
With Stirling's formula we obtain
\[ \sum_{k=1}^\infty \Big|{1/2\choose k}\Big| = \sum_{k=1}^\infty \frac{(2k)!}{(2k-1)(k!)^24^k} \sim \sum_{k=1}^\infty \frac{1}{k^{3/2}}.\]
It follows that the above series converges, hence $R$ is bounded.

For any smooth $u\in \mathscr{S} \otimes E$ we have
\[ \|(I-L)^{-\frac{1}{2}}RCu\|_{L^p(\Omega;E)} = \Big\|\sum_{j=1}^\infty J_nu\Big\|_{L^p(\O;E)} = \|u-\EE(u)\|_{L^p(\O;E)}.\]
With approximation, it follows that the equality holds for all $u\in \DD^{1,p}(E)$. Using the boundedness of $R$, $(I-L)^{-1/2}$ and Meyer's inequalities, we obtain
\begin{align*}
\|u-\EE(u)\|_{L^p(\Omega;E)} &= \|(I-L)^{-1/2}RCu\|_{L^p(\O;E)} \leq c\|Cu\|_{L^p(\O;E)} \leq c'\|Du\|_{L^p(\Omega;\gamma(H,E))},
\end{align*}
\end{proof}

As a consequence of the Poincar\'e inequality one has the following:
\begin{clry}\label{cor:poincare}
Let $E$ be a \umd space, $p\in (1, \infty)$ and let $k\geq 1$. For all $u\in \D^{k,p}(\O;E)$ one has
\[\|u\|_{\D^{k,p}(E)} \eqsim_{p,E,k} \|u\|_{L^p(\O;E)} + \|D^k u\|_{L^p(\O;\g^k(H,E))}.\]
\end{clry}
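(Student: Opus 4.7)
The $\gtrsim$ direction is immediate from the definition $\|u\|_{\D^{k,p}(E)}^p = \|u\|_{L^p(\O;E)}^p + \sum_{i=1}^k \|D^i u\|_{L^p(\O;\gamma^i(H,E))}^p$. For the reverse direction, by density of $\mathscr{P}\otimes E$ in $\D^{k,p}(E)$ (Proposition \ref{prop:denseP}) and continuity in the $\D^{k,p}$ topology of both sides, it suffices to prove the inequality for $u\in\mathscr{P}\otimes E$, where all chaos projections and iterated derivatives are polynomial and everything below is justified.

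My plan is to establish, by downward induction on $i$ from $k-1$ to $1$, the one-step bound
$$ \|D^i u\|_{L^p(\Omega;\gamma^i(H,E))} \lesssim_{p,E,i} \|u\|_{L^p(\Omega;E)} + \|D^{i+1} u\|_{L^p(\Omega;\gamma^{i+1}(H,E))} \qquad (\star) $$
for $1\leq i\leq k-1$. Iterating $(\star)$ downwards from $i=k-1$ immediately yields $\|D^i u\|_{L^p}\lesssim \|u\|_{L^p}+\|D^k u\|_{L^p}$ for every such $i$, and summing over $i$ gives the corollary.

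To prove $(\star)$, I apply the Poincar\'e inequality (Proposition \ref{prop:poincare}) with $E$ there replaced by $\gamma^i(H,E)$, which is again \umd by Facts \ref{factsgamma}, and with the random variable taken to be $D^i u\in \D^{1,p}(\Omega;\gamma^i(H,E))$. Using the identification $\gamma(H,\gamma^i(H,E))=\gamma^{i+1}(H,E)$ this gives
$$ \|D^i u - \E(D^i u)\|_{L^p(\Omega;\gamma^i(H,E))} \lesssim_{p,E,i} \|D^{i+1} u\|_{L^p(\Omega;\gamma^{i+1}(H,E))}. $$
The remaining task is to absorb the constant term $\|\E(D^i u)\|_{\gamma^i(H,E)}$ into $\|u\|_{L^p(\Omega;E)}$.

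For this, I use that $D$ lowers the Wiener chaos by one, so $J_m D = D J_{m+1}$ on $\mathscr{P}\otimes E$ for all $m\geq 0$, and iterating gives $J_0 D^i = D^i J_i$. Hence $\E(D^i u)=J_0 D^i u = D^i(J_i u)$, a deterministic element of $\gamma^i(H,E)$. Since $E$ is \umd it does not contain $\ell^1_n$'s uniformly, so each $J_i$ is bounded on $L^p(\Omega;E)$ (Subsection \ref{subsec:Ornstein}). Moreover $J_i u$ lies in the $i$-th Wiener chaos, on which $C^i$ acts as multiplication by the scalar $i^{i/2}$, so Meyer's inequalities (Theorem \ref{Meyer's inequalities}) yield
$$ \|\E(D^i u)\|_{\gamma^i(H,E)} = \|D^i(J_i u)\|_{L^p(\Omega;\gamma^i(H,E))} \lesssim_{p,E,i} \|C^i(J_i u)\|_{L^p(\Omega;E)} = i^{i/2}\|J_i u\|_{L^p(\Omega;E)} \lesssim \|u\|_{L^p(\Omega;E)}. $$
Combined with the Poincar\'e estimate above this gives $(\star)$, completing the proof. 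The most delicate point is the commutation $J_0 D^i = D^i J_i$; its scalar version $D J_n = J_{n-1} D$ is standard, and it extends chaos-wise to $\mathscr{P}\otimes E$, but it is the one step where the Wiener-chaos structure (as opposed to abstract functional calculus) is really used.
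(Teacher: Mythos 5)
Your proof is correct and follows essentially the same route as the paper: reduce by density to polynomial (resp.\ smooth) cylindrical random variables, prove the one-step estimate $(\star)$ by applying the Poincar\'e inequality to $D^i u$ in the \umd space $\gamma^i(H,E)$, and control the mean via the commutation $\E D^i u = J_0 D^i u = D^i J_i u$ together with the $L^p$-boundedness of $J_i$. The only (harmless) difference is cosmetic: for the bound $\|D^i J_i u\|_{L^p}\lesssim \|J_i u\|_{L^p}$ you invoke Meyer's inequalities (Theorem \ref{Meyer's inequalities}) and the fact that $C^i$ acts as the scalar $i^{i/2}$ on the $i$-th chaos, whereas the paper cites \cite[Theorem 5.3]{janmaas} applied $i-1$ times.
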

\begin{proof}

By density it suffices to prove the norm equivalence for all $u\in \mathscr{S}\otimes E$.
The part $\gtrsim_{p,E,k}$ is trivial. For the estimate $\lesssim_{p,E}$, by an iteration argument it suffices to show that for all $i\geq 1$,
\begin{equation}\label{eq:suffiteration}
\|D^{i} u\|_{L^p(\O;\g^{i}(H,E))}\lesssim_{p,E,i} \|u\|_{L^p(\O;E)} + \|D^{i+1} u\|_{L^p(\O;\g^{i+1}(H,E))}.
\end{equation}
Observe that $\E D^{i} u = J_0 D^{i} u = D^{i} J_{i} u$ (see \cite[Proposition 1.2.2]{Nualart2}). Hence by \cite[Theorem 5.3]{janmaas} (applied $i-1$ times), one has
\[\|\E D^{i} u\|_{\g^i(H,E)} = \|D^{i} J_{i} u\|_{L^p(\O;\g^i(H,E))} \eqsim_{p,i} \|J_{i} u\|_{L^p(\O;E)}\lesssim_{p,E,i} \|u\|_{L^p(\O;E)}.\]
Now Proposition \ref{prop:poincare} and the latter estimate imply that
\begin{align*}
\|D^{i} u\|_{L^p(\O;\g^{i}(H,E))} & \leq \|\E D^{i} u\| + \|D^{i} u - \E D^{i} u\|_{L^p(\O;\g^{i}(H,E))}
\\ & \lesssim_{p,E,i} \|u\|_{L^p(\O;E)} + \|D^{i+1} u\|_{L^p(\O;\g^{i+1}(H,E))}.
\end{align*}
and hence \eqref{eq:suffiteration} follows.
\end{proof}

\subsection{Independence of $p$ and a weak characterization\label{subsec:indweak}}

The following theorem suggests that for $F$ to be in $\DD^{k,p}(E)$, it suffices to check that $F$ is differentiable in a very weak sense. The result is known in the case $E=\R$ (see \cite[Theorem 15.64]{Jan97}). However, in this situation the proof below is new as well.
\begin{thrm}\label{thm:pind and D-weak=strong-UMD}
Let $E$ be a \umd Banach space, let $p\in (1, \infty)$ and $k\geq 1$. Let $F\in L^p(\Omega;E)$ be such that for all $x^*\in E^*$ one has $\lb F, x^*\rb \in \D^{k,1}(\R)$. If there exists an $\xi \in L^p(\Omega;\gamma^k(H,E))$ such that for all $x^*\in E^*$
\[D^k\inprod{F, x^*} = \inprod{\xi, x^*}.\]
Then $F\in \DD^{k,p}(E)$ and $D^k F = \xi$.
\end{thrm}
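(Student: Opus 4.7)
The plan is to regularize $F$ by the Ornstein--Uhlenbeck semigroup: set $F_t := P_E(t) F$ for $t>0$ and show that $(F_t)_{t>0}$ converges to $F$ in $\D^{k,p}(E)$. Since $E$ is \umd it does not contain $\ell^1_n$'s uniformly, so $(P_E(t))_{t\geq 0}$ is an analytic $C_0$-semigroup on $L^p(\Omega;E)$; in particular $F_t\in \mathrm{dom}(L_E^m)$ for every $m\geq 1$. By Meyer's inequalities (Theorem \ref{Meyer's inequalities}), $\mathrm{dom}(L_E^m)=\mathrm{dom}(C_E^{2m})=\D^{2m,p}(E)$, so $F_t\in \D^{k,p}(E)$ and $D^kF_t$ is a bona fide element of $L^p(\Omega;\gamma^k(H,E))$.

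The crux of the argument is to identify this derivative explicitly as $D^kF_t = e^{-kt}P_{\gamma^k(H,E)}(t)\xi$. I plan to verify this by testing against $x^*\in E^*$, using the iterated pairing that sends $R\in \gamma^k(H,E)$ to $\langle R,x^*\rangle\in \gamma^k(H,\R)$ (well-defined via the ideal property of Proposition \ref{gamma-ideal-property}). Since $\langle D^kG,x^*\rangle = D^k\langle G,x^*\rangle$ for $G\in \mathscr{S}\otimes E$, and hence by density for $G\in \D^{k,p}(E)$, and since $\langle F_t,x^*\rangle = P(t)\langle F,x^*\rangle$ by the tensor definition of $P_E$, the scalar commutation $D^kP(t)=e^{-kt}P_{\gamma^k(H,\R)}(t)D^k$ (immediate from the Wiener chaos decomposition: $P(t)$ acts as $e^{-nt}$ on $\mathcal{H}_n$ while $D$ lowers the chaos level by one) combined with the hypothesis $D^k\langle F,x^*\rangle = \langle \xi,x^*\rangle$ yields
\[\langle D^kF_t,x^*\rangle = e^{-kt}P_{\gamma^k(H,\R)}(t)\langle \xi,x^*\rangle = \langle e^{-kt}P_{\gamma^k(H,E)}(t)\xi,x^*\rangle,\]
the second equality because $\langle\,\cdot\,,x^*\rangle$ commutes with the scalar tensor extension. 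As $x^*$ varies, these pairings separate points of $\gamma^k(H,E)$ (by iterating the $k=1$ case), so the identity for $D^kF_t$ holds almost surely.

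The remainder is a completeness argument. Strong continuity of the two $C_0$-semigroups yields $F_t\to F$ in $L^p(\Omega;E)$ and $e^{-kt}P_{\gamma^k(H,E)}(t)\xi \to \xi$ in $L^p(\Omega;\gamma^k(H,E))$ as $t\downarrow 0$. Applying Corollary \ref{cor:poincare} to $F_t-F_s\in \D^{k,p}(E)$ gives
\[\|F_t-F_s\|_{\D^{k,p}(E)} \lesssim_{p,E,k} \|F_t-F_s\|_{L^p(\Omega;E)} + \|D^kF_t - D^kF_s\|_{L^p(\Omega;\gamma^k(H,E))},\]
so $(F_t)_{t>0}$ is Cauchy in $\D^{k,p}(E)$. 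Since its $L^p$-limit is $F$, completeness gives $F\in \D^{k,p}(E)$ with $D^kF = \xi$. The main obstacle I expect is step 2: carefully reconciling the vector-valued semigroup $P_{\gamma^k(H,E)}(t)$ with its scalar-valued counterpart acting on $\gamma^k(H,\R)$-valued functions, and rigorously lifting the weak-in-$x^*$ identity to an almost sure equality in $\gamma^k(H,E)$.
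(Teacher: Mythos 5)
Your proposal is correct and follows essentially the same route as the paper: regularize via $F_t=P_E(t)F$, use analyticity and Meyer's inequalities to place $F_t$ in $\D^{k,p}(E)$, identify $D^kF_t=e^{-kt}P_{\gamma^k(H,E)}(t)\xi$ by testing against $x^*\in E^*$ and the commutation $D^kP(t)=e^{-kt}P(t)D^k$ (the paper cites \cite[Lemma 6.2]{janmaas} where you argue via the chaos decomposition), and conclude by strong continuity, Corollary \ref{cor:poincare}, and closedness of $D$. No gaps.
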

\begin{proof}
Since $P_E$ is an analytic semigroup, $P_E(t) F \in \bigcap_{j\geq 1}\textrm{Dom}(L_E^j) \subset \DD^{\infty,p}(E)$ for all $t>0$. The inclusion follows from Meyer's inequalities (see Theorem \ref{Meyer's inequalities}).
From \cite[Lemma 6.2]{janmaas} we get for all $x^*\in E^*$,
\begin{align*}
\inprod{D^k P_E(t)F, x^*} &= D^k \inprod{P_E(t)F, x^*} = D^k P_{\RR}(t)\inprod{F, x^*} = e^{-k t}P_{\gamma^k(H,\RR)}D^k \inprod{F, x^*}
 \\ & = e^{-k t}P_{\gamma^k(H,\RR)}\inprod{\xi, x^*}
 = \inprod{e^{-k t}P_{\gamma^k(H,E)}\xi, x^*}.
\end{align*}
Hence $D^k P_E(t)F = e^{-k t}P_{\gamma^k(H,E)}(t)\xi$.

Now, let $t_n \downarrow 0$ as $n\to \infty$, and set $F_n = P_E(t_n)F$. Then, by the strong continuity of $(P(t))_{t\geq 0}$, we get
$F_n \to F$ in $L^p(\Omega;E)$ and $D^k F_n \to \xi$ in $L^p(\Omega;\gamma^k(H,E))$. From Corollary \ref{cor:poincare} it follows that $(F_n)_{n\geq 1}$ is a Cauchy sequence in $\DD^{k,p}(E)$. Hence by the closedness of $D$, we get $F\in \DD^{k,p}(E)$
and $D^k F = \xi$.
\end{proof}

\begin{rmrk}
A careful check of the above proof of Theorem \ref{thm:pind and D-weak=strong-UMD} shows that we can replace the assumption $\lb F, x^* \rb \in \D^{k,1}(\R)$ by by $\lb F, x^*\rb\in D^{1,1}(\R)$ and iteratively, for all $1\leq j\leq k-1$, $D^{j} \lb F, x^* \rb \in D^{1,1}(\g^{j-1}(H,E))$. As a consequence, for \umd spaces $E$, one obtains that our definition of $\D^{k,p}(E)$ coincides with the definition of \cite{janmaas}.
\end{rmrk}

Next we will give another definition of a Gaussian Sobolev space. For $p\in [1, \infty)$ and $k\in \NN$ let
\[\DD_*^{k,p}(E) :=\{F\in \DD^{k,1}(E): F\in L^p(\O;E), \ \forall \ 1\leq j\leq k \ \  D^{j} F\in L^p(\O;\g^j(H,E)) \}.  \]

The next result can be viewed as a Gaussian version of the Meyers-Serrin theorem for Sobolev spaces. For the scalar setting, a proof is provided in \cite[Theorem 15.64]{Jan97}. There, as in \cite{Bog10}, a different definition of $\D^{k,p}(\R)$ is given in terms of differentiability properties. Their definition coincides with our definition, since $\mathscr{P}$ is dense in both spaces (see Proposition \ref{prop:denseP}, and \cite[Theorem 15.108]{Jan97}).
\begin{clry}\label{cor:Dstar=D}
Let $E$ be a \umd Banach space, let $p\in [1,\infty)$ and $k\in \NN$. Then $\DD_*^{k,p}(E) = \DD^{k,p}(E)$.
\end{clry}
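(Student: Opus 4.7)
The plan is to reduce this to Theorem~\ref{thm:pind and D-weak=strong-UMD}, which already does most of the real work. For $p=1$ the claim is immediate: any $F\in\DD^{k,1}(E)$ automatically lies in $L^1(\O;E)$ with $D^jF\in L^1(\O;\g^j(H,E))$ for every $1\leq j\leq k$ by the very definition of the space, so $\DD^{k,1}_*(E)=\DD^{k,1}(E)$ tautologically. Similarly, the inclusion $\DD^{k,p}(E)\subseteq\DD^{k,p}_*(E)$ is trivial for every $p\in[1,\infty)$: on the probability space $\O$ one has $L^p\hookrightarrow L^1$, so any $F\in\DD^{k,p}(E)$ and all its derivatives are $L^1$-integrable, hence $F\in\DD^{k,1}(E)$, while the $L^p$-integrability of $F$ and of each $D^jF$ is built into the definition of $\DD^{k,p}(E)$.

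For the reverse inclusion when $p\in(1,\infty)$, I would fix $F\in\DD^{k,p}_*(E)$ and first pass to the scalar picture. Since $F\in\DD^{k,1}(E)$, I pick approximants $F_n\in\mathscr{S}\otimes E$ with $F_n\to F$ in $\DD^{k,1}(E)$. For an arbitrary $x^*\in E^*$, the scalar random variables $\lb F_n,x^*\rb$ are smooth with $D^j\lb F_n,x^*\rb=\lb D^jF_n,x^*\rb$, and these converge in $L^1(\O;\g^j(H,\R))$ to $\lb D^jF,x^*\rb$ for each $0\leq j\leq k$. Closability of the scalar Malliavin derivative then yields $\lb F,x^*\rb\in\DD^{k,1}(\R)$ together with the identification $D^k\lb F,x^*\rb=\lb D^kF,x^*\rb$.

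With this in hand, set $\xi:=D^kF\in L^p(\O;\g^k(H,E))$, which is available as part of the definition of $\DD^{k,p}_*(E)$. The hypotheses of Theorem~\ref{thm:pind and D-weak=strong-UMD} are now exactly satisfied: $F\in L^p(\O;E)$, $\lb F,x^*\rb\in\DD^{k,1}(\R)$ for every $x^*\in E^*$, and $D^k\lb F,x^*\rb=\lb\xi,x^*\rb$. Applying that theorem gives $F\in\DD^{k,p}(E)$, which closes the argument.

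There is no substantial obstacle here: the only point worth checking carefully is the identification $D^j\lb F,x^*\rb=\lb D^jF,x^*\rb$ for $F\in\DD^{k,1}(E)$, which is a standard closability argument but is precisely what converts the vector-valued hypothesis into the pointwise-in-$x^*$ form required by Theorem~\ref{thm:pind and D-weak=strong-UMD}. Once this translation is performed, the corollary is essentially Theorem~\ref{thm:pind and D-weak=strong-UMD} repackaged, with $\xi$ supplied for free by the definition of $\DD^{k,p}_*(E)$.
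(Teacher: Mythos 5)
Your proof is correct and follows the paper's own route exactly: the $p=1$ case and the inclusion $\DD^{k,p}(E)\subseteq\DD^{k,p}_*(E)$ are dispatched as trivial, and the converse is obtained by verifying the hypotheses of Theorem \ref{thm:pind and D-weak=strong-UMD} with $\xi=D^kF$. The only difference is that you spell out the (standard but worth recording) closability argument showing $\lb F,x^*\rb\in\DD^{k,1}(\R)$ with $D^k\lb F,x^*\rb=\lb D^kF,x^*\rb$, which the paper leaves implicit.
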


\begin{proof}
First note that the case $p=1$ is trivial. Let $p>1$.
Obviously, one has $\DD^{k,p}(E) \subseteq \DD_*^{k,p}(E)$. The converse result follows from Theorem \ref{thm:pind and D-weak=strong-UMD}.
\end{proof}

\subsection{Calculus results\label{sec:calculus}}

Next, we will prove several calculus results in the vector-valued setting. The following product rule will be useful later on.

\begin{lemma}[Product rule]\label{lem:productrule}
Let $E_0$, $E_1$ and $E_2$ be Banach spaces. Let $b:E_0\times E_1\to E_2$ be a bilinear operator with the property that there is a constant $C$ such that for all $x\in E_0$ and $y\in E_1$ one has $\|b(x, y)\|\leq C\|x\|\, \|y\|$.
Let $1 \leq p,q,r <\infty$ be such that $\frac{1}{p} + \frac{1}{q} = \frac{1}{r}$. If $F \in \DD^{1,p}(E_0)$ and $G\in \DD^{1,q}(E_1)$, then $b(F,G) \in \DD^{1,r}(E)$ and
\begin{equation}\label{eq:productruleid}
D [b(F,G)] = b(DF,G) + b(F,DG).
\end{equation}
Here $b(DF,G)h = b((DF)h,G)$ and $b(F,DG)h = b(F,(DG)h)$ for $h\in H$.
\end{lemma}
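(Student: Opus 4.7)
The plan is the classical ``verify on a dense class, then pass to limits by closability'' strategy, but first I need to make sense of $b(DF,G)$ and $b(F,DG)$ as elements of $L^r(\Omega;\gamma(H,E_2))$. For a.e.\ $\omega$, the map $T_{G(\omega)} := b(\,\cdot\,,G(\omega))$ is a bounded linear operator $E_0\to E_2$ with norm at most $C\|G(\omega)\|_{E_1}$, so the ideal property (Proposition \ref{gamma-ideal-property}) gives $T_{G(\omega)}\circ DF(\omega)\in \gamma(H,E_2)$ with norm at most $C\|G(\omega)\|_{E_1}\|DF(\omega)\|_{\gamma(H,E_0)}$. Taking $L^r$-norms and applying H\"older's inequality with exponents $p,q$ yields
\[
\|b(DF,G)\|_{L^r(\Omega;\gamma(H,E_2))} \leq C\,\|DF\|_{L^p(\Omega;\gamma(H,E_0))}\,\|G\|_{L^q(\Omega;E_1)},
\]
and symmetrically for $b(F,DG)$. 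These bilinear bounds are the engine of the approximation argument.

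Next I would verify \eqref{eq:productruleid} directly on $F\in\mathscr{S}\otimes E_0$ and $G\in\mathscr{S}\otimes E_1$. After writing both random variables using a common tuple $h_1,\ldots,h_N\in H$ (padding with unused directions if necessary), we may take $F=f(W(h_1),\ldots,W(h_N))\otimes x$ and $G=g(W(h_1),\ldots,W(h_N))\otimes y$, so $b(F,G)=(fg)(W(h_1),\ldots,W(h_N))\otimes b(x,y)$. The definition of $D$ on smooth random variables together with the scalar Leibniz rule $\partial_i(fg)=(\partial_i f)g+f(\partial_i g)$ immediately gives \eqref{eq:productruleid}; bilinearity of both $b$ and the two sides of \eqref{eq:productruleid} in $(F,G)$ extends the identity to arbitrary finite linear combinations, i.e.\ to all of $\mathscr{S}\otimes E_0\times\mathscr{S}\otimes E_1$.

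For the general case, choose approximating sequences $F_n\in\mathscr{S}\otimes E_0$ with $F_n\to F$ in $\D^{1,p}(E_0)$ and $G_n\in\mathscr{S}\otimes E_1$ with $G_n\to G$ in $\D^{1,q}(E_1)$. Splitting differences as
\[
b(F_n,G_n)-b(F,G)=b(F_n-F,G_n)+b(F,G_n-G)
\]
and applying the H\"older-type bilinear estimates above to this expression (and analogously to $b(DF_n,G_n)-b(DF,G)$ and $b(F_n,DG_n)-b(F,DG)$, using boundedness of $(G_n)$ in $\D^{1,q}$), I conclude $b(F_n,G_n)\to b(F,G)$ in $L^r(\Omega;E_2)$ and $b(DF_n,G_n)+b(F_n,DG_n)\to b(DF,G)+b(F,DG)$ in $L^r(\Omega;\gamma(H,E_2))$. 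Since the smooth case gives $D[b(F_n,G_n)]=b(DF_n,G_n)+b(F_n,DG_n)$, closability of $D$ (Proposition \ref{closable}) identifies the limit and delivers both $b(F,G)\in\D^{1,r}(E_2)$ and the formula.

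The only real obstacle is bookkeeping: checking strong measurability of $\omega\mapsto b(DF(\omega),G(\omega))$ as a $\gamma(H,E_2)$-valued map, and observing that the ideal-property estimate is truly $\omega$-wise so that H\"older integration is legal. Both are routine once one writes $b(DF,G)(\omega)=T_{G(\omega)}\circ DF(\omega)$ as a composition and exploits density of the smooth class (on which measurability is automatic).
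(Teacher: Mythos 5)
Your proposal is correct and follows essentially the same route as the paper: verify the identity on smooth random variables via the scalar Leibniz rule, establish the pointwise bound $\|b(DF,G)\|_{\gamma(H,E_2)}\leq C\|DF\|_{\gamma(H,E_0)}\|G\|_{E_1}$ (you cite the ideal property, the paper writes out the equivalent Gaussian-sum computation), combine with H\"older, and conclude by approximation and the closedness of $D$. The only cosmetic difference is that you identify the limit of $b(DF_n,G_n)+b(F_n,DG_n)$ explicitly before invoking closability, whereas the paper first notes the Cauchy property and then says the identity follows by approximation; the substance is identical.
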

\begin{proof}
Using H\"older's inequality, one sees that for all $F \in L^{p}(\O;E_0)$ and $G\in L^q(\O;E_1)$, $b(F,G) \in L^{r}(\O;E_2)$ and
\begin{equation}\label{eq:Lrholder}
\|b(F,G)\|_{L^r(\O;E_2)}\leq C \|F\|_{L^{p}(\O;E_0)} \|G\|_{L^q(\O;E_1)}.
\end{equation}

If $F\in \mathscr{S} \otimes E_0$ and $G\in \mathscr{S} \otimes E_1$, \eqref{eq:productruleid} follows from a straightforward calculation and the product rule for ordinary derivatives. Moreover, observe that
\[  \|D[b(F,G)]\|_{L^r(\O;\g(H,E_2))} \leq \|b(DF,G)\|_{L^r(\O;\g(H,E_2))} + \|b(F,DG)\|_{L^r(\O;\g(H,E_2))}\]
Now by linearity it follows that pointwise in $\O$, we have
\begin{align*}
\|b(DF,G)\|_{\g(H,E_2)} & = \Big\|b\Big(\sum_{n\geq 1}\tilde{\g}_n DF h_n,G\Big)\Big\|_{L^2(\tilde{\O};E_2)}
\\ & \leq C\Big\|\sum_{n\geq 1}\tilde{\g}_n DF h_n\Big\|_{L^2(\tilde{\O};E_0)}  \|G\|_{E_1} = C\|DF\|_{\g(H,E_0)} \|G\|_{E_1}.
\end{align*}
Here $(\tilde{\g}_n)_{n\geq 1}$ is a sequence of standard Gaussian random variables on a probability space $(\tilde{\O},\tilde{\F}, \tilde{\P})$.

Similarly, one sees that $\|b(F,DG)\|_{\g(H,E_2)}\leq C\|F\|_{\g(H,E_0)} \|DG\|_{\g(H,E_1)}$ pointwise in $\O$. From H\"older's inequality we obtain
\begin{equation}\label{eq:eq:LrholderD}
\begin{aligned}
\|&D [b(F,G)]\|_{L^r(\O;\g(H,E_2))} \\ & \leq C\|DF\|_{L^p(\O;\g(H,E_0))} \|G\|_{L^q(\O;E_1)} +  C\|F\|_{L^p(\O;E_0)} \|DG\|_{L^q(\O;\g(H,E_1))}
\\ & \leq 2C\|F\|_{\D^{1,p}(\O;E_0)} \|G\|_{\D^{1,q}(\O;E_1)}.
\end{aligned}
\end{equation}

Now let $F \in \DD^{1,p}(E_0)$ and $G\in \DD^{1,q}(E_1)$. Choose sequences $(F_n)_{n\geq 1}$ and $(G_n)_{n\geq 1}$ of smooth random variables such that $\limn F_n = F$ in $\DD^{1,p}(E_0)$ and $\limn G_n = G$ in $\DD^{1,p}(E_1)$. Then by \eqref{eq:Lrholder}, $\limn b(F_n,G_n) = b(F,G)$ in $L^r(\O;E_2)$. Moreover, by \eqref{eq:eq:LrholderD} $(D b(F_n,G_n))_{n\geq 1}$ is a Cauchy sequence and hence convergent in $L^r(\O;\g(H,E_2))$. Since $D$ is closed, we obtain $b(F,G)\in \D^{1,r}(E_2)$. Furthermore, \eqref{eq:productruleid} follows from an approximation argument.
\end{proof}

Let $E$ be a Banach space. For a sequence $(x_n)_{n\geq 1}$ in $E$ and $x\in E$ we say that $\limn x_n = x$ weakly if for all $x^*\in E^*$ one has $\limn \lb x_n, x^*\rb =\lb x, x^*\rb$. Notation: $x_n\rightharpoonup x$. Recall that if $E$ is reflexive, then for every bounded sequence $(x_n)_{n\geq 1}$ in $E$ there is a subsequence $(n_k)_{k\geq 1}$ and an element $x\in E$ such that $x_{n_k}\rightharpoonup x$ as $k$ tends to infinity. Moreover, in this case $\displaystyle \|x\|\leq \liminf_{n\to \infty} \|x_n\|$.

\begin{lemma}[Compactness]\label{cr-tech-lemma}
Let $E$ be a reflexive Banach space and let $p\in (1, \infty)$. Let $(F_n)_{n\geq 1}$ be a sequence in $\DD^{1,p}(E)$ and $F\in L^p(\Omega;E)$.
Assume $F_n \rightharpoonup F$ in $L^p(\Omega;E)$ and that there is a constant $C$ such that for all $n\geq 1$, $\|DF_n\|_{L^p(\Omega;\gamma(H,E))}\leq C$. Then $F\in \DD^{1,p}(E)$ and $\|DF\|_{L^p(\O;\g(H,E))}\leq C$. Moreover, there exists a subsequence $(n_k)_{k\geq 1}$ such that $DF_{n_k} \rightharpoonup DF$.
\end{lemma}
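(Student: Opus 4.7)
My plan is to exploit reflexivity of $L^p(\Omega;\gamma(H,E))$ to extract a weakly convergent subsequence of $(DF_n)_{n\geq 1}$ and then to use the closability of $D$ from Proposition \ref{closable} together with Mazur's theorem to identify the weak limit with $DF$. No substantial use of the UMD property is needed; only reflexivity of $E$ will enter.

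First I would observe that, since $E$ is reflexive, Facts \ref{factsgamma} implies that $\gamma(H,E)$ is reflexive, and consequently $L^p(\Omega;\gamma(H,E))$ is reflexive for $p\in(1,\infty)$ (standard Lebesgue--Bochner reflexivity). The uniform bound $\|DF_n\|_{L^p(\Omega;\gamma(H,E))}\leq C$ then furnishes, by the Eberlein--\v{S}mulian theorem, a subsequence (still indexed by $(n_k)$) and an element $\xi\in L^p(\Omega;\gamma(H,E))$ with $DF_{n_k}\rightharpoonup \xi$. Since $(F_{n_k})$ is a subsequence of a weakly convergent sequence, we also have $F_{n_k}\rightharpoonup F$ in $L^p(\Omega;E)$, so the pair $(F_{n_k},DF_{n_k})$ converges weakly to $(F,\xi)$ in the product Banach space $L^p(\Omega;E)\times L^p(\Omega;\gamma(H,E))$.

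Next I would consider the graph
\[ \Gamma(D) := \{(G,DG) : G\in \DD^{1,p}(E)\}\subset L^p(\Omega;E)\times L^p(\Omega;\gamma(H,E)). \]
By Proposition \ref{closable}, $\Gamma(D)$ is a norm-closed linear subspace; being convex and norm-closed it is weakly closed by Mazur's theorem. Since each pair $(F_{n_k},DF_{n_k})$ lies in $\Gamma(D)$ and the sequence converges weakly to $(F,\xi)$, the limit also lies in $\Gamma(D)$, so $F\in \DD^{1,p}(E)$ and $DF=\xi$. The norm bound $\|DF\|_{L^p(\Omega;\gamma(H,E))}\leq C$ then follows from weak lower semicontinuity of the norm, and the final assertion $DF_{n_k}\rightharpoonup DF$ is just the restatement of the convergence already obtained.

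There is no serious obstacle here. The only point that must be checked, rather than computed, is that reflexivity propagates through the construction $E\leadsto \gamma(H,E)\leadsto L^p(\Omega;\gamma(H,E))$, which is immediate from Facts \ref{factsgamma} and classical Bochner-space theory; everything else is a textbook application of Mazur's theorem to the closed graph of a densely defined closed operator between reflexive spaces.
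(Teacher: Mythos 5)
Your proof is correct and follows essentially the same route as the paper's: both rest on reflexivity of $L^p(\Omega;E)\times L^p(\Omega;\gamma(H,E))$ (via Facts \ref{factsgamma}) to extract a weakly convergent subsequence of $(F_{n_k},DF_{n_k})$ and on the closedness of the graph of $D$ on $\DD^{1,p}(E)$ — you pass from norm-closed to weakly closed by Mazur, while the paper works with weak compactness inside the closed, hence reflexive, graph itself. The identification $DF=\xi$, the bound via weak lower semicontinuity of the norm, and the subsequence assertion are handled just as in the paper.
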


\begin{proof}
Let $G = \{(\xi, D\xi): \xi\in \D^{1,p}(E)\}\subseteq L^p(\O;E)\times L^p(\O;\g(H,E))$. Since $D$ is a closed linear operator, $G$ is a closed linear subspace of $L^p(\O;E)\times L^p(\O;\g(H,E))$. As $E$ and $\g(H,E)$ are reflexive, the latter space is reflexive, and hence $G$ is reflexive as well.

As $F_n \rightharpoonup F$ in $L^p(\Omega;E)$, the uniform boundedness principle implies that $(F_n)_{n\geq 1}$ is bounded in $L^p(\Omega;E)$. Now this together with the assumptions yields that $(F_n, D F_n)_{n\geq 1}$ is a bounded sequence in $G$. Since $G$ is reflexive, it follows that there is a $(\zeta,D\zeta)\in G$ and a subsequence $(n_k)_{k\geq 1}$ such that $(F_{n_k}, D F_{n_k})\rightharpoonup (\zeta, D\zeta)$. Since $F_{n}\rightharpoonup F$, one has that $\zeta = F$, and hence $F\in \D^{1,p}(E)$ with $D F = D\zeta$. It follows that $D F_{n_k} \rightharpoonup DF$, and in particular, $\|DF\|_{L^p(\O;\g(H,E))}\leq C$.
\end{proof}

Next we extend the chain rule for the Malliavin derivative to the vector-valued setting.
\begin{prop}\label{ChainRule-2}
Let $E_0$ be a Banach space, let $E_1$ be a \umd Banach space and let $p\in (1, \infty)$. Suppose $\varphi: E_0\to E_1$ is Fr\'{e}chet differentiable and has a continuous and bounded derivative. If $F \in \DD^{1,p}(E_0)$, then $\varphi(F) \in \DD^{1,p}(E_1)$ with
\[D(\varphi(F)) = \varphi'(F)DF.\]
\end{prop}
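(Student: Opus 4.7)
The approach is to reduce to a scalar chain rule via the weak characterisation of Theorem~\ref{thm:pind and D-weak=strong-UMD}, which is exactly where the \umd hypothesis on $E_1$ enters. Let $M:=\sup_x\|\varphi'(x)\|_{\calL(E_0,E_1)}<\infty$. Since $\varphi$ is then globally $M$-Lipschitz, $\varphi(F)\in L^p(\Omega;E_1)$. Set $\xi(\omega):=\varphi'(F(\omega))\circ DF(\omega)$; by the ideal property (Proposition~\ref{gamma-ideal-property}), $\xi(\omega)\in\gamma(H,E_1)$ with $\|\xi(\omega)\|_{\gamma(H,E_1)}\le M\|DF(\omega)\|_{\gamma(H,E_0)}$, hence $\xi\in L^p(\Omega;\gamma(H,E_1))$. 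This $\xi$ is the proposed value of $D\varphi(F)$.

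The key intermediate result is the scalar chain rule: for every $y^*\in E_1^*$, $\psi:=y^*\circ\varphi\colon E_0\to\R$ is $C^1$ with $\|\psi'(x)\|_{E_0^*}\le M\|y^*\|$, and I claim $\psi(F)\in\D^{1,p}(\R)$ with $D\psi(F)=\psi'(F)(DF)$. I would prove this in two stages. First, for $F\in\mathscr{S}\otimes E_0$ write $F=\Phi(W(h_1),\dots,W(h_n))$ with $\Phi$ a finite sum in $C_b^\infty(\R^n)\otimes E_0$, so $\Phi$ is bounded and $g:=\psi\circ\Phi\colon\R^n\to\R$ is bounded with continuous bounded gradient; mollifying, $g_\varepsilon:=g*\rho_\varepsilon\in C_b^\infty(\R^n)$, so $g_\varepsilon(W(h))\in\mathscr{S}$, and dominated convergence under the Gaussian law of $W(h)$ together with closedness of $D$ yields $\psi(F)=g(W(h))\in\D^{1,p}(\R)$ with the predicted derivative. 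Second, for general $F\in\D^{1,p}(E_0)$, pick $F_n\in\mathscr{S}\otimes E_0$ with $F_n\to F$ in $\D^{1,p}(E_0)$ (and, after passing to a subsequence, a.s.); then $\psi(F_n)\to\psi(F)$ in $L^p(\Omega)$ by Lipschitzness, and the decomposition
\[\psi'(F_n)DF_n-\psi'(F)DF \;=\; \psi'(F_n)(DF_n-DF) \;+\; \bigl(\psi'(F_n)-\psi'(F)\bigr)DF\]
converges to zero in $L^p(\Omega;H)$: the first summand by the ideal property, and the second by dominated convergence, using a.s.\ continuity of $\psi'$, the uniform bound $2M\|y^*\|$, the ideal property pointwise, and $L^p$-integrability of $\|DF\|_{\gamma(H,E_0)}$. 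Closedness of $D$ then completes the scalar chain rule.

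Armed with the scalar case, the proposition follows from Theorem~\ref{thm:pind and D-weak=strong-UMD}: for every $y^*\in E_1^*$,
\[D\langle\varphi(F),y^*\rangle \;=\; D\psi(F) \;=\; \psi'(F)(DF) \;=\; (y^*\circ\varphi'(F))\circ DF \;=\; \langle\xi,y^*\rangle,\]
so $\langle\varphi(F),y^*\rangle\in\D^{1,p}(\R)\subseteq\D^{1,1}(\R)$ and the coupling relation holds with $\xi\in L^p(\Omega;\gamma(H,E_1))$; the weak characterisation then gives $\varphi(F)\in\D^{1,p}(E_1)$ and $D\varphi(F)=\xi=\varphi'(F)DF$, as required. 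The main obstacle is the passage $\psi'(F_n)DF_n\to\psi'(F)DF$ in $\gamma$-norm in the second stage of the scalar chain rule: a.s.\ continuity of $\psi'$ only provides operator-norm convergence pointwise in $\omega$, and it is precisely the ideal property that upgrades this to convergence of the $\gamma$-radonifying images, which is what makes the scheme run in the Banach setting rather than merely in the Hilbert one.
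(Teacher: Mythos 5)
Your proposal is correct and follows essentially the same two-step route as the paper: first establish the scalar chain rule for $\psi=y^*\circ\varphi$ by approximating $F$ with smooth random variables and using the closedness of $D$ together with the decomposition $\psi'(F_n)DF_n-\psi'(F)DF=\psi'(F_n)(DF_n-DF)+(\psi'(F_n)-\psi'(F))DF$, and then pass to general \umd $E_1$ via the weak characterization of Theorem~\ref{thm:pind and D-weak=strong-UMD}. The only (immaterial) difference is that for smooth $F$ you re-derive the finite-dimensional chain rule by mollifying $g=\psi\circ\Phi$, whereas the paper transports the problem to $\R^{M'}$ via the coordinate isomorphism $b$ and cites \cite[Proposition 1.2.3]{Nualart2}.
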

\begin{proof}
Observe that for all $x,y\in E_0$, $\|\varphi(x)-\varphi(y)\|\leq C\|x-y\|$, where $C = \sup_{y\in E_0}\|\varphi'(y)\|_{\calL(E_0,E_1)}$. In particular,  for all $x\in E_0$, $\|\varphi(x)\|\leq C(1+\|\varphi(0)\|)$.

{\em Step $1$:} First assume  $E_1 = \RR$. \\
Suppose that $F$ is a smooth random variable of the form
\[F = \sum_{m=1}^M f_m(W(h_1), \ldots, W(h_n)) \otimes x_m.\]
Now consider the isomorphism $b:\textrm{sp}\{x_1, \ldots, x_M\} \to \RR^{M'}$, $M'\leq M$, which sends $x_i$ to $e_i$. Then obviously $\psi:\R^{M'}\to \textrm{sp}\{x_1, \ldots, x_M\}$ given by $\psi := \varphi \circ b^{-1}$ is Fr\'{e}chet differentiable and has a continuous and bounded derivative given by $\psi'(x)(y) = \varphi'(b^{-1}(x)) b^{-1}(y)$. Moreover, from the finite dimensional chain rule (see \cite[Proposition 1.2.3]{Nualart2}) we get $\varphi(F) = \psi(b(F))$ is in $\DD^{1,p}(\R)$ and $D(\varphi(F)) = \varphi'(F)DF$.

Now let $F\in \D^{1,p}(E_0)$. Choose sequence of smooth random variables $F_k$ converging to $F$ in $\DD^{1,p}(E_0)$. By going to a subsequence if necessary, we can assume that
$F_k\to F$ in $E_0$ and $D F_k\to DF$ in $\g(H,E_0)$ almost surely as $k\to \infty$.
Clearly, one has
\[\lim_{k\to \infty} \|\varphi(F_k) - \varphi(F)\|_{L^p(\O)} \leq C \lim_{k\to \infty}\|F_k-F\|_{L^p(\O)} = 0.\]
Moreover, by the above, $\varphi(F_k)\in \D^{1,p}(\O)$ for each $k\geq 1$ and
\begin{align*}
\|D&(\varphi(F_k)) - \varphi'(F)DF\|_{L^p(\Omega;H)} = \|\varphi'(F_k)DF_k - \varphi'(F)DF\|_{L^p(\Omega;H)} \\
 &\leq \|\varphi'(F_k)DF_k - \varphi'(F_k)DF\|_{L^p(\Omega;H)} + \|\varphi'(F_k)DF - \varphi'(F)DF\|_{L^p(\Omega;H)}
 \\ & \leq C\|DF_k - DF\|_{L^p(\Omega;H)} + \|\varphi'(F_k)DF - \varphi'(F)DF\|_{L^p(\Omega;H)}
\end{align*}
The first term clearly converges to zero. The second term converges to zero by the continuity and boundedness of $\varphi'$ and the Dominated Convergence Theorem. By the closedness of $D$ it follows that $\varphi(F) \in \DD^{1,p}(\R)$ and $D(\varphi(F)) = \varphi'(F)DF.$

{\em Step $2$:} Let $E_1$ be an arbitrary \umd space. Let $F\in \DD^{1,p}(E_0)$. Fix an $y^*\in E_1^*$. Consider the function $\Phi_{y^*}: E_0\to \RR$ defined by
\[\Phi_{y^*}(x) = \inprod{\varphi(x), y^*},  \ \ x\in E_0.\]
Applying step 1 to the function  $\Phi_{y^*}$ we obtain that
$\Phi_{y^*}(F) \in \DD^{1,p}(\RR)$ and
\[D\inprod{\varphi(F), y^*} = D(\Phi_{y^*}(F)) = \Phi'_{y^*}(F)DF = \inprod{\varphi'(F)DF, y^*}.\]
Since $y^*\in E_1^*$ was arbitrary, and $\varphi'(F)DF\in L^p(\O;\g(H,E_1))$, we can use Theorem \ref{thm:pind and D-weak=strong-UMD} to obtain that $\varphi(F) \in \DD^{1,p}(E_1)$ and $D(\varphi(F)) = \varphi'(F)DF$.
\end{proof}

\begin{rmrk}
It is clear from the proof of Proposition \ref{ChainRule-2} that it remains true if $E_1=\R$ and $p=1$.
\end{rmrk}

\subsection{A chain rule for Lipschitz functions\label{subsec:Lip}}
In this section we will study the chain rule for the Malliavin derivative for Lipschitz functions $\phi: E_0\to E_1$, where $E_0$ and $E_1$ are Banach spaces with some additional geometric structure.
\begin{prop}
Let $E_0$ be a Banach space that has a Schauder basis, let $E_1$ be a \umd Banach space and let $p\in (1, \infty)$. Let $\phi: E_0\to E_1$ be a Lipschitz function with
\[\|\phi(x_1)-\phi(x_2)\|\leq L \|x_1-x_2\|,  \ \ \ x_1, x_2\in E_0.\]
If $F\in \DD^{1,p}(E_0)$, then $\phi(F) \in \DD^{1,p}(E_1)$, and furthermore there exists a bounded linear operator $T_F:\gamma(H,E_0)\to L^\infty(\Omega;\gamma(H,E_1))$ with $\|T_F\|\leq L$ and $D(\phi(F)) = T_F(DF)$
\end{prop}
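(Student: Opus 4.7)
The plan is to mimic Nualart's scalar argument \cite[Proposition 1.2.4]{Nualart2}: approximate $\phi$ by Fr\'echet differentiable maps $\phi_n:E_0\to E_1$ for which Proposition \ref{ChainRule-2} applies, and then pass to the limit using the compactness result Lemma \ref{cr-tech-lemma}. The Schauder basis hypothesis enters at the approximation step, where it reduces the smoothing problem on the infinite dimensional space $E_0$ to a finite dimensional convolution, and the \umd property of $E_1$ is what guarantees the reflexivity needed both for the compactness argument and for the duality identification used to extract $T_F$.

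Let $(e_k)_{k\geq 1}$ denote a Schauder basis of $E_0$ with projections $P_n$ and coordinate isomorphism $b_n:P_n E_0\to\RR^n$. Fixing a standard mollifier $\rho\in C_c^\infty(\RR^n)$ and scales $\varepsilon_n\downarrow 0$, I first define
\[
\psi_n(z):=\int_{\RR^n}\phi\bigl(b_n^{-1}(b_n(z)-y)\bigr)\rho_{\varepsilon_n}(y)\,dy,\qquad z\in P_n E_0,
\]
which is smooth on the finite dimensional space $P_n E_0$ and inherits the Lipschitz constant $L$ of $\phi$. Setting $\phi_n(x):=\psi_n(P_nx)$ produces a map $\phi_n:E_0\to E_1$ that is Fr\'echet differentiable with continuous bounded derivative factoring as $\phi_n'(x)=\psi_n'(P_nx)P_n$, satisfies $\|\phi_n'(x)\|_{\calL(E_0,E_1)}\leq L\|P_n\|$ uniformly in $x$, and converges pointwise $\phi_n(x)\to\phi(x)$ as $n\to\infty$ (combining $P_nx\to x$ with the standard uniform approximation of a continuous function by its mollifications on compacts).

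Proposition \ref{ChainRule-2} then yields $\phi_n(F)\in\DD^{1,p}(E_1)$ with $D[\phi_n(F)]=\phi_n'(F)DF$, and the ideal property (Proposition \ref{gamma-ideal-property}) gives the uniform bound $\|D[\phi_n(F)]\|_{L^p(\O;\gamma(H,E_1))}\lesssim L\|DF\|_{L^p(\O;\gamma(H,E_0))}$. Dominated convergence forces $\phi_n(F)\to\phi(F)$ in $L^p(\O;E_1)$, and since \umd implies reflexivity of $E_1$ and of $\gamma(H,E_1)$ by Facts \ref{factsgamma}, Lemma \ref{cr-tech-lemma} delivers $\phi(F)\in\DD^{1,p}(E_1)$ together with a subsequence satisfying $D[\phi_{n_k}(F)]\rightharpoonup D[\phi(F)]$ in $L^p(\O;\gamma(H,E_1))$.

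It remains to construct $T_F$. I identify the operator valued random variables $\phi_n'(F)$ with elements of $\calL(E_0,L^\infty(\O;E_1))$ via $(\phi_n'(F)x)(\omega):=\phi_n'(F(\omega))x$. Since $E_1$ is reflexive, so is $E_1^*$, hence $E_1^*$ has the Radon-Nikodym property and $L^\infty(\O;E_1)\cong(L^1(\O;E_1^*))^*$; consequently $\calL(E_0,L^\infty(\O;E_1))\cong(E_0\widehat\otimes_\pi L^1(\O;E_1^*))^*$ is a dual space. Banach-Alaoglu extracts a weak-* cluster point $G$ along a further subsequence, and composing $G$ with the $\gamma$-ideal action then produces $T_F:\gamma(H,E_0)\to L^\infty(\O;\gamma(H,E_1))$; matching the two weak limits of $\phi_{n_k}'(F)DF$ identifies $T_F(DF)=D[\phi(F)]$. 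I expect the main obstacle to be obtaining the sharp norm bound $\|T_F\|\leq L$ rather than $L\sup_n\|P_n\|$: the cleanest remedy is to renorm $E_0$ so that the Schauder basis becomes monotone (possible for any Banach space with a Schauder basis), using that $\DD^{1,p}(E_0)$ as a set and its topology are invariant under equivalent renormings of $E_0$.
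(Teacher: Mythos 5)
Your approximation scheme and the passage to the limit are essentially the paper's own argument: mollify $\phi$ through the basis projections on a finite-dimensional section, apply Proposition \ref{ChainRule-2} to the smooth approximants, and invoke Lemma \ref{cr-tech-lemma} (reflexivity of $\gamma(H,E_1)$ via Facts \ref{factsgamma}) to get $\phi(F)\in\DD^{1,p}(E_1)$ with a weakly convergent subsequence of derivatives. Two small points there: you should make explicit that $\|\phi_n(x)-\phi(x)\|\lesssim L\|P_nx-x\|+L\varepsilon_n\|b_n^{-1}\|$ and that $\|b_n^{-1}\|$ grows (of order $\sqrt{n}$ for a normalized basis), so $\varepsilon_n$ must be chosen decaying fast enough relative to $n$ --- the paper takes $\varepsilon_n=1/n$; and your renorming remedy for the constant does not recover $\|T_F\|\leq L$ in the \emph{original} norm, since the Lipschitz constant and the $\gamma(H,E_0)$-norm both transform under the renorming and the basis constant reappears. (The paper's own proof in fact only yields $\|T_F\|\leq LC$ with $C$ the basis constant, so you are in good company.)

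The genuine gap is in your construction of $T_F$. You extract a weak-$*$ cluster point $G$ in $\calL(E_0,L^\infty(\O;E_1))\cong(E_0\widehat\otimes_\pi L^1(\O;E_1^*))^*$ and then propose to ``compose $G$ with the $\gamma$-ideal action.'' But the ideal property (Proposition \ref{gamma-ideal-property}) upgrades a genuine operator $U\in\calL(E_0,E_1)$ to $R\mapsto UR$ on $\gamma$-spaces; an element of $\calL(E_0,L^\infty(\O;E_1))$ is \emph{not} an essentially bounded measurable family of operators $E_0\to E_1$, so there is no $\omega$-wise operator to compose with, and the paper explicitly warns that $L^\infty(\O;\calL(E_0,E_1))$ and $\calL(E_0,L^\infty(\O;E_1))$ differ in infinite dimensions. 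Composing fiberwise in $h$ only gives a map $H\to L^\infty(\O;E_1)$, and there is no $\gamma$-Fubini isomorphism at $p=\infty$ identifying $\gamma(H,L^\infty(\O;E_1))$ with $L^\infty(\O;\gamma(H,E_1))$, so your $T_F$ does not land in the required space by this route. The fix is the paper's: note that each $T_n(\omega)=\phi_n'(F(\omega))$ \emph{is} an honest operator, so it extends pointwise in $\omega$ to $\gamma(H,E_0)\to\gamma(H,E_1)$ with norm $\leq LC$ by the ideal property; then (after reducing to $\O$ countably generated and $E_1$ separable) run the weak-$*$ compactness and diagonal argument directly in $L^\infty(\O;\gamma(H,E_1))=L^1(\O;\gamma(H,E_1)^*)^*$ over a countable dense family $(R_i)$ in $\gamma(H,E_0)$, define $T_F$ on their span, extend by the uniform bound, and identify $T_F(DF)=D\phi(F)$ by testing against simple functions and approximating $DF$ by simple functions.
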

Observe that for $\xi\in L^p(\O;\g(H,E_0))$, $T_F(\xi)\in L^p(\O;\g(H,E_1))$ is well-defined as the composition of the mappings $\omega\mapsto (\omega, \xi(\omega))\in \O\times\g(H,E_0)$ and $(\omega, R)\mapsto (T_F R)(\omega)\in \g(H,E_1)$.

\begin{proof}
Let $(x_n)_{n\geq 1}$ be a Schauder basis for $E_0$ and let $(x_n^*)_{n\geq 1}$ be its associated biorthogonal functionals. We can assume that $\|x_n\| = 1$ for all $n\geq 1$. For each $n\geq 1$ consider the projection $S_n:E_0\to E_0$ onto the first $n$ basis coordinates. It is well-known that there is a constant $C$ such that for all $n\geq 1$, $\|S_n\| \leq C$. Letting, $S_0 = 0$, we see that $P_n := S_n - S_{n-1}$ satisfies $\|P_n\| \leq 2C$ for all $n\geq 1$.

For $n\geq 1$ fixed, consider the map $l_n: \RR^n \to \mathrm{sp}\{x_1, \ldots, x_n\}$ that sends the basis coordinate $e_i \in \RR^n$ to $x_i$.
We claim that $\|l_n\|\leq \sqrt{n}$ and $\|l_n^{-1}\|\leq 2C\sqrt{n}$.
Indeed, for $\alpha = (\alpha_1, \ldots, \alpha_n) \in \RR^n$ one has
\[\|l_n\alpha\| = \Big\|\sum_{i=1}^n \alpha_i x_i\Big\| \leq \sum_{i=1}^n |\alpha_i| \|x_i\| \leq \sqrt{n} \|\alpha\|,\]
and the first part of the claim follows.
For $x = \sum_{i=1}^n \alpha_i x_i \in \mathrm{sp}\{x_1, \ldots, x_n\}$ one has
\begin{align*}
2C\Big\|\sum_{i=1}^n \alpha_i x_i \Big\| \geq \Big\|P_j\sum_{i=1}^n \alpha_i x_i \Big\| = \|\alpha_j x_j\| = |\alpha_j|, \ \ \ j\in \{1,2,\ldots, n\}.
\end{align*}
It follows that
\[2\sqrt{n}C\Big\|\sum_{i=1}^n \alpha_i x_i \Big\| \geq \Big(\sum_{j=1}^n |\alpha_j|^2\Big)^{1/2} = \|l_n^{-1} x\|.\]
Hence the second part of the claim follows.

Next, for every $n\geq 1$, let $\zeta_n:\RR^n\to\RR$ be a $C^\infty(\R^n)$-function such that
\[\zeta_n\geq 0, \ \ \mathrm{supp}(\zeta_n) \subset B(0,1), \ \  \text{and} \ \ \int_{\RR^n} \zeta_n(x)\;dx = 1.\]
Fix $n\geq 1$ and fix $\e > 0$. Let $\zeta_n^{\e}:\R^n\to \R$ be given by by $\zeta_n^\e(x) := \e^{-n} \zeta_n(x/\e)$.
Define
$\phi_n: E_0\to E_1$ by
\[\phi_n(x) := \int_{\RR^n} \zeta_n^\e\big(y-l_n^{-1}(S_nx)\big) \phi(l_ny)\;dy = \int_{\RR^n} \zeta_n^\e(y)\phi\big(S_n(x)+l_n(y)\big)\;dy.\]
It follows that
\begin{align*}
(\EE\|\phi_n&(F) - \phi(F)\|_{E_1}^p)^{1/p} = \Big(\E\Big\|\int_{\RR^n} \zeta_n^\e(y)\big[\phi\big(S_n(F)+l_n(y)\big)  - \phi(F)\big]\;dy \Big\|_{E_1}^p\Big)^{1/p}
\\ & \leq \Big(\EE\Big(\int_{\RR^n} \zeta_n^\e(y)\|\phi\big(S_n(F)+l_n(y)\big) -\phi(F)\|_{E_1}\;dy\Big)^p\Big)^{1/p} \\
 &\leq L\Big(\EE\Big(\int_{\RR^n} \zeta_n^\e(y)\|S_n(F)+l_n(y)-F\|_{E_0}\;dy\Big)^p\Big)^{1/p} \\
 &\leq L\Big(\EE\Big(\int_{\RR^n} \zeta_n^\e(y)\|S_n(F)-F\|\;dy\Big)^p\Big)^{1/p} + L\int_{\RR^n}\zeta_n^{\e}(y) \|l_ny\|_{E_0}\;dy \\
 &\leq L(\EE\|S_n(F)-F\|^p)^{1/p} + L\sqrt{n}\int_{B(0, \e)} \e^{-n}\zeta_n(y/\e) \|y\|_{\R^n}\;dy \\
 &\leq L(\EE\|S_n(F)-F\|^p)^{1/p} + L\varepsilon \sqrt{n}.
\end{align*}
By the dominated convergence theorem one has
$(\EE\|S_n(F)-(F)\|^p)^{1/p} \to 0$ as $n\to\infty$. Therefore, letting $\e = \frac{1}{n}$.
it follows that $\limn \phi_n(F) = \phi(F)$ in $L^p(\Omega;E_1)$.

Clearly, $x\mapsto \zeta_n^{\e}(y-l_n^{-1}(S_nx))$ is Fr\'{e}chet differentiable, and hence $\phi_n$ is differentiable.
We claim that for all $x\in E_0$, $\|\phi_n'(x)\|\leq C L$. Indeed, fix $x,h\in E_0$ and note that
\[\phi_n'(x)h = \lim_{t\to 0} \frac{\phi_n(x+th) - \phi_n(x)}{t}. \]
Now for $t\neq 0$ one has
\begin{align*}
\Big\| \frac{\phi_n(x+th) - \phi_n(x)}{t}\Big\|_{E_1}
 &= \frac{1}{|t|} \int_{\RR^n} \zeta_n^{\e}(y) [ \phi(S_n(x)+S_n(th) + l_n(y))-\phi(S_n(x)+l_n(y))]\;dy\Big\|_{E_1} \\
 &\leq \frac{L}{|t|} \int_{\RR^n} \zeta_n^{\e}(y) \|tS_n(h)\|_{E_0}\;dy \leq L\|S_n h\|_{E_0} \leq LC \|h\|_{E_0}.
\end{align*}
Therefore, $\|\phi_n'(x)\|\leq LC$ and the claim follows.
By Proposition \ref{ChainRule-2}, we see that $\phi_n(F) \in \DD^{1,p}(E_1)$, with
\[D\phi_n(F) = \phi_n'(F)DF.\]
Moreover, by the above claim one obtains that
\[\|\phi'_n(F)DF\|_{L^p(\Omega;\gamma(H,E_1)} \leq LC\|DF\|_{L^p(\Omega;\gamma(H,E_0))}.\]
Since the latter is independent of $n$, we can use Lemma \ref{cr-tech-lemma} to conclude that $\phi(F) \in \DD^{1,p}(E_1)$. Moreover, taking an appropriate subsequence we can assume that
\begin{equation}\label{eq:weakconvLipschitz}
\limn \phi'_n(F)DF = D \phi(F) \ \ \ \  \text{in the weak topology of $L^p(\O;\g(H,E_1)$}.
\end{equation}
Next, we will show that there exists an operator $T \in \calL(\gamma(H,E_0),L^\infty(\Omega;\gamma(H,E_1)))$ such that $D(\phi(F)) = TDF$. Since $E_0$ has a basis, there exists a basis $(R_n)_{n\geq 1}$ for $\gamma(H,E_0)$. Set $T_n := \phi'_n(F)$. Replacing $(\O, \F, \P)$ by the space generated by $F$ and $DF$, we can assume $\O$ is countably generated. Moreover, since for each $n,j\geq 1$, $\phi_n'(F) R_j$ is strongly measurable, we can assume $E_1$ is separable and hence that $\g(H,E_1)$ is separable. Since $E_1$ is reflexive, it follows that $\g(H,E_1)$ is reflexive and hence also $\g(H,E_1)^*$ is separable. We can conclude that $L^1(\Omega;\gamma(H,E_1)^*)$ is separable. Moreover, once again by the reflexivity of $\g(H,E_1)$, one has $L^1(\Omega;\gamma(H,E_1)^*)^* = L^\infty(\O;\g(H,E_1))$.

Recall the following basic fact (see \cite[Theorem 3.17]{Ru91}): a bounded sequence $(x_n^*)_{n\geq 1}$ in $E^*$ where $E$ is a separable Banach space has a weak$^*$ convergent subsequence, i.e., there is an $x^*\in E^*$ such that for all $x\in E$, $\lb x, x^*\rb = \limk \lb x, x_{n_k}^*\rb$. Moreover, $\|x^*\|\leq \liminf_{k\to \infty} \|x_{n_k}^*\|$.

For every $\omega\in \O$, we can consider a canonical extension $T_n(\omega): \gamma(H,E_0) \to \gamma(H,E_1)$ defined by $(T_n(\omega) R) h = T_n(\omega) (R h)$, and this extension satisfies $\|T_n(\omega)\|\leq LC$. For every $R\in \gamma(H,E_0)$, the bounded sequence $(T_nR)_{n\geq 1}$ in $L^\infty(\Omega;\gamma(H,E_1))$ contains a weak$^*$ convergent subsequence. In particular, this holds for every element $R_i$ with $i\geq 1$. By a diagonal argument we can find a subsequence $(n_k)_{k\geq 1}$ and elements $(z_i)_{i\geq 1}$ in $L^\infty(\O;\g(H,E_1))$ such that for all $i\geq 1$, $\lim_{k\to \infty} T_{n_k} R_i = z_i$ in the weak$^*$-topology of $L^\infty(\Omega;\gamma(H,E_1))$.
Let $\g_0(H,E_0) = \textrm{sp}\{R_1, R_2, \ldots\}$. Define the operator $T:\g_0(H,E_0) \to L^\infty(\Omega;\gamma(H,E_1))$ by
\[T\Big(\sum_{i=1}^n a_iR_i\Big) = \sum_{i=1}^n a_iz_i.\]
For each $R\in \g_0(H,E)$ one has $\lim_{k\to \infty}T_{n_k} R = TR$ in the weak$^*$-topology of $L^\infty(\Omega;\gamma(H,E_1))$ and therefore,
\begin{align*}
\|TR\|_{L^\infty(\Omega;\gamma(H,E_1))} & \leq \liminf_{k\to\infty} \| T_{n_k} R\|_{L^\infty(\Omega;\gamma(H,E_1))} \leq LC\|R\|.
\end{align*}
It follows that $T$ has a continuous extension $T: \gamma(H,E_0)\to L^\infty(\Omega;\gamma(H,E_1))$. Moreover, an approximation argument shows that for all $R\in \g(H,E_0)$, $TR = \lim_{k\to \infty}T_{n_k} R$ in the weak$^*$-topology. We show that for all $\xi\in L^p(\O;\g(H,E_0))$ and all simple functions $\eta:\O\to \g(H,E_1)^*$ one has
\begin{equation}\label{eq:Txi identification}
\E \lb T\xi, \eta\rb = \lim_{k\to \infty} \E \lb T_{n_k} \xi, \eta\rb.
\end{equation}
To prove this, note that if $\xi$ is a simple function as well, then by linearity it suffices to prove \eqref{eq:Txi identification} for $\xi = \one_A R$ with $R\in \g(H,E_0)$ and $A\in \F$. In that case one has
\[\EE\langle \eta, T\xi\rangle = \EE\langle \one_A\eta, TR\rangle = \lim_{k\to\infty} \EE\langle \one_A \eta, T_{n_k} R\rangle= \lim_{k\to\infty} \EE\langle \eta, T_{n_k} \xi\rangle.
\]
for all $\eta\in L^1(\O;\g(H,E_1)^*)$.
Now let $\xi\in L^p(\O;\g(H,E_0))$ and let $\eta:\O\to \g(H,E)$ be simple function. Let $\varepsilon>0$ be arbitrary. Choose a simple function $\xi_0:\O\to \g(H,E_0)$ such that $\|\xi - \xi_0\|_{L^p(\O;\g(H,E_0))}\leq \varepsilon$. It follows that
\begin{align*}
\limsup_{k\to \infty}\big|\E \lb T\xi, \eta\rb - \E \lb T_{n_k} \xi, \eta\rb\big| & \leq \limsup_{k\to \infty} \big|\E\lb T \xi_0, \eta\rb  - \E \lb T_{n_k} \xi_0, \eta\rb\big| + 2LC \varepsilon
= 2LC \varepsilon.
\end{align*}
Since $\varepsilon>0$ was arbitrary \eqref{eq:Txi identification} follows.

Taking $\xi = DF$ in \eqref{eq:Txi identification} and using \eqref{eq:weakconvLipschitz} it follows that for all simple functions $\eta:\O\to \g(H,E_1)^*$ one has
\[\E \lb TDF, \eta\rb = \lim_{k\to \infty} \E \lb T_{n_k} DF, \eta\rb =  \lim_{k\to \infty} \E\lb \phi'_{n_k}(F)DF, \eta\rb = \E\lb D \phi(F), \eta\rb.\]
By a density and Hahn-Banach argument this yields $TDF = D\phi(F)$. Hence we can take $T = T_F$.
\end{proof}

\begin{rmrk}
The first part of the proof is based on the idea in \cite[Proposition 5.2]{CarmTeh}, where the result has been proved for Hilbert spaces $E_0$ and $E_1$. It is surprising that their argument can be extended to a Banach space setting. We do not know if the assumption that $E_0$ has a basis can be avoided. In the final part of the argument in \cite[Proposition 5.2]{CarmTeh} a compactness argument is used to construct an operator $T\in L^\infty(\Omega;\calL(E_0,E_1))$ such that $D(\phi(F)) = T(DF)$. There seems to be a difficulty in this proof, and at the moment it remains unclear whether such a $T$ exists. Note that there are subtle (measurability) differences between the latter space and $\calL(E_0,L^\infty(\Omega;E_1))$ if $E_0$ and $E_1$ are infinite dimensional.
\end{rmrk}

\section{The divergence operator and the Skorohod integral\label{sec:Skorohod}}

\begin{defn}
Let $p\in [1, \infty)$. Let ${\rm Dom}_{p,E}(\delta)$ be the set of $\zeta\in L^p(\Omega; \gamma(H,E))$ for which there exists an $F\in L^p(\Omega;E)$ such that
\[ \EE\inprod{\zeta, DG}_{\textrm{Tr}} = \EE\inprod{F, G}_{E, E^*}, \ \ \ \ \ G\in \mathscr{S}\otimes E^*.\]
In that case, $F$ is uniquely determined, and we write $\delta(\zeta) = F$. The operator $\delta$ with domain ${\rm Dom}_{p,E}(\delta)$ is called the {\em divergence operator}.
\end{defn}
The operator $\delta$ is closed and densely defined, which easily follows from the scalar setting (see \cite[p. 274]{Jan97} and \cite{Nualart2}). For $p\in (1, \infty)$, the operator $\delta$ coincides with the adjoint of $D$ acting on $\DD^{1,q}(E^*)$ where $\frac{1}{p} + \frac{1}{q} = 1$ (see \cite{janjan}).
If there is no danger of confusion, we will also write ${\rm Dom}(\delta)$ for ${\rm Dom}_{p,E}(\delta)$.

The following identity can be found in \cite[Lemma 3.2]{janjan}.
\begin{lemma}\label{sec:div-oprtr;lemma:simple-functions}
We have $\mathscr{S} \otimes \gamma(H,E) \subseteq \textrm{Dom}(\delta)$ and
\[\delta(f\otimes R) = \sum_{j\geq 1} W(h_j)f \otimes Rh_j - R(Df), \qquad f\in \mathscr{S},\ R\in \gamma(H,E).\]
Here, $(h_j)_{j\geq 1}$ denotes an arbitrary orthonormal basis of $H$.
\end{lemma}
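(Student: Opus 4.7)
My approach is to verify directly that the candidate
\[
F := \sum_{j\geq 1} W(h_j) f \otimes R h_j - R(Df)
\]
satisfies the defining duality identity of $\delta$ against all test tensors in $\mathscr{S}\otimes E^*$; by bilinearity of both sides and density of simple tensors, it is enough to take $G = g\otimes x^*$ with $g\in \mathscr{S}$ and $x^*\in E^*$, and by bilinearity in $\zeta$ it is enough to consider $\zeta = f\otimes R$.

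Before pairing, I would check that $F \in L^p(\Omega;E)$ for every $p\in [1,\infty)$. Since $(W(h_j))_{j\geq 1}$ is an i.i.d.\ sequence of standard Gaussians on $(\Omega,\F,\P)$ and $R\in \gamma(H,E)$, the series $S := \sum_j W(h_j) R h_j$ converges in $L^2(\Omega;E)$ by the very definition of $\gamma$-radonifying; the Kahane--Khintchine inequalities then upgrade this to convergence in $L^p(\Omega;E)$ for every $p<\infty$. Since $f\in \mathscr{S}$ is bounded, $fS$ inherits this integrability. The remaining term $R(Df)$ is pointwise controlled by $\|R\|_{\calL(H,E)}\|Df\|_H$, and $\|Df\|_H$ is bounded because $f\in C^\infty_b$ of finitely many Gaussians.

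Next I would compute both sides of the pairing explicitly using $R^* x^* \in H$. For $G = g\otimes x^*$, $(DG)h = \lb Dg,h\rb x^*$, and collapsing the series $\sum_j \lb Dg,h_j\rb\lb Rh_j, x^*\rb = \lb Dg, R^* x^*\rb_H$ gives
\[
\lb \zeta, DG\rb_{\mathrm{Tr}} = f\,\lb Dg, R^* x^*\rb_H.
\]
On the other hand, recognising $\sum_j W(h_j)\lb Rh_j,x^*\rb = W(R^* x^*)$ as the $L^2(\Omega)$-convergent expansion of the isonormal process against $R^*x^*$, and using $\lb R(Df),x^*\rb = \lb Df, R^* x^*\rb_H$, yields
\[
\lb F, G\rb_{E,E^*} = g\bigl[f\,W(R^* x^*) - \lb Df, R^* x^*\rb_H\bigr].
\]
Taking expectations, the desired duality $\EE\lb\zeta,DG\rb_{\mathrm{Tr}} = \EE\lb F,G\rb$ reduces, after the scalar product rule $D(fg) = f\,Dg + g\,Df$, to the single scalar identity
\[
\EE\bigl[\lb D(fg), R^* x^*\rb_H\bigr] = \EE\bigl[fg\cdot W(R^* x^*)\bigr],
\]
which is exactly the standard scalar Malliavin integration-by-parts formula $\EE\lb D\varphi, h\rb_H = \EE[\varphi\,W(h)]$ (see \cite[Lemma~1.2.1]{Nualart2}) applied to $\varphi = fg\in \mathscr{S}$ and the deterministic $h = R^* x^*\in H$.

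The main obstacle is the justification of the series manipulations: one has to commute the infinite sum over the ONB $(h_j)$ first with the linear functional $\lb\cdot,x^*\rb$ (to identify the scalar sum with $W(R^* x^*)$) and then with the expectation. Both are controlled by the $L^2(\Omega;E)$-convergence of $\sum_j W(h_j) Rh_j$, which is precisely the $\gamma$-radonifying hypothesis on $R$, together with the boundedness of $f,g$ and their derivatives; no further regularity input is needed. The uniqueness of $F$ built into the definition of $\delta$ then also ensures that the resulting formula for $\delta(f\otimes R)$ is independent of the choice of the orthonormal basis $(h_j)$.
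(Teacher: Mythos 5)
Your proof is correct: the direct verification of the duality $\EE\inprod{f\otimes R, DG}_{\mathrm{Tr}}=\EE\inprod{F,G}$ for $G=g\otimes x^*$, reducing via the product rule to the scalar identity $\EE\inprod{D(fg),R^*x^*}_H=\EE[fg\,W(R^*x^*)]$, is exactly the standard argument, and your justification of the series interchanges via the $L^2(\Omega;E)$-convergence of $\sum_j W(h_j)Rh_j$ and Parseval/Cauchy--Schwarz is adequate. The paper itself gives no proof but cites \cite[Lemma 3.2]{janjan}, whose argument is essentially the same duality computation, so there is nothing to add.
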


An important consequence of Meyer's inequalities and multiplier theorem is the following sufficient condition to be in the domain of $\delta$ (see \cite[Proposition 6.10]{janmaas}).
\begin{prop}\label{delta-continuity-thrm}
Let $E$ be a \umd Banach space and let $1<p<\infty$. The divergence operator $\delta$ is continuous from $\DD^{1,p}(\gamma(H,E))$ to $L^p(\Omega;E)$.
\end{prop}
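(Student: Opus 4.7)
The plan is to reduce by density (Proposition \ref{prop:denseP}) and closability of $\delta$ to $\zeta \in \mathscr{S}\otimes\gamma(H,E)$, and then to factorise $\delta$ through a bounded Riesz-type operator. Since $\gamma(H,E)$ inherits the UMD property from $E$ (Facts \ref{factsgamma}), Theorems \ref{Meyer's inequalities} and \ref{Meyer's Multiplier Theorem} apply on both $E$- and $\gamma(H,E)$-valued $L^p$-spaces. Decomposing $\zeta = \sum_n \zeta_n$ with $\zeta_n = J_n\zeta \in \mathcal{H}_n\otimes\gamma(H,E)$, and using that $\delta\zeta_n \in \mathcal{H}_{n+1}\otimes E$, one finds the chaos-wise commutation
\[C_E\,\delta\zeta_n = \sqrt{n+1}\,\delta\zeta_n = \delta\bigl((I+N_\gamma)^{1/2}\zeta_n\bigr),\]
where $N_\gamma := C_\gamma^2$ denotes the number operator on $L^p(\Omega;\gamma(H,E))$. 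Equivalently, on $\mathscr{P}\otimes\gamma(H,E)$ one has the operator identity $\delta = C_E\tilde T$ with $\tilde T := C_E^{-1}\delta = \delta(I+N_\gamma)^{-1/2}$.

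Step 1 is to show that $\tilde T$ extends to a bounded operator $L^p(\Omega;\gamma(H,E)) \to L^p(\Omega;E)$. From the defining pairing $\E\langle \delta\zeta, G\rangle = \E\langle \zeta, DG\rangle_{\mathrm{Tr}}$, a chaos-level computation gives $\tilde T^* = D\circ C_{E^*}^{-1}$ (with the convention $C_{E^*}^{-1}|_{\mathcal{H}_0\otimes E^*} = 0$). Since $E^*$ is UMD, Meyer's inequality yields $\|DF\|_{L^{p'}(\gamma(H,E^*))} \lesssim \|C_{E^*}F\|_{L^{p'}(E^*)}$ on $\DD^{1,p'}(E^*)$, and combined with the Poincar\'e inequality (Proposition \ref{prop:poincare}) this shows that $C_{E^*}^{-1}$ is bounded on the mean-zero part of $L^{p'}(\Omega;E^*)$; consequently
\[\|\tilde T^*G\|_{L^{p'}(\gamma(H,E^*))} = \|D C_{E^*}^{-1}G\|_{L^{p'}(\gamma(H,E^*))} \lesssim \|G\|_{L^{p'}(E^*)}.\]

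Step 2 applies Meyer's inequality in $E$ to $\tilde T\zeta \in \DD^{1,p}(E)$:
\[\|\delta\zeta\|_{L^p(E)} = \|C_E\tilde T\zeta\|_p \lesssim \|\tilde T\zeta\|_p + \|D\tilde T\zeta\|_p.\]
The first summand is $\lesssim \|\zeta\|_p$ by Step 1. For the second, the commutation $D\delta = I + \delta D$ (verified term by term on the chaos decomposition) yields $D\tilde T\zeta = (I+N_\gamma)^{-1/2}\zeta + (I+N_\gamma)^{-1/2}\delta D\zeta$. The operator $(I+N_\gamma)^{-1/2}$ is a contraction on $L^p(\Omega;\gamma(H,E))$ via the subordination formula
\[(I+N_\gamma)^{-1/2} = \frac{1}{\sqrt{\pi}}\int_0^\infty e^{-t}P_\gamma(t)\,t^{-1/2}\,dt\]
and the $L^p$-contractivity of the OU semigroup, hence the first piece is $\lesssim \|\zeta\|_p$. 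The second piece factorises as a bounded Meyer multiplier with symbol $\sqrt{k/(k+1)}$ on $\gamma(H,E)$-valued $\mathcal{H}_k$ (whose expansion $\sum_r a_r k^{-r}$ converges absolutely by Stirling) applied to the Riesz operator $\delta(I+N')^{-1/2}$, with $N'$ the number operator on $L^p(\Omega;\gamma^2(H,E))$, evaluated at $D\zeta$; by Step 1 applied with $\gamma(H,E)$ in place of $E$, this is $\lesssim \|D\zeta\|_p$. Summing gives $\|\delta\zeta\|_p \lesssim \|\zeta\|_p + \|D\zeta\|_p = \|\zeta\|_{\DD^{1,p}(\gamma(H,E))}$, as desired.

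The main obstacle lies in Step 1: one must correctly identify $\tilde T^*$ via the chaos pairing and combine Meyer's inequalities with the Poincar\'e inequality to bound $C_{E^*}^{-1}$ on the mean-zero part of $L^{p'}(\Omega;E^*)$. Once this Riesz-type bound is in place, the remainder is bookkeeping with the chaos commutations, the subordination formula for $(I+N_\gamma)^{-1/2}$, and Meyer's multiplier theorem applied to a bounded, absolutely convergent symbol.
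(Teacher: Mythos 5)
Your argument is correct, but note that the paper does not actually prove this proposition: it is quoted from \cite[Proposition 6.10]{janmaas}, with only the remark that it is ``a consequence of Meyer's inequalities and multiplier theorem.'' Your proof is essentially a reconstruction of that cited argument: factor $\delta = C_E\,\tilde T$ with $\tilde T=\delta(I+N)^{-1/2}$, prove the Riesz-transform bound for $\tilde T$ by dualizing to $D\,C_{E^*}^{-1}$ (Meyer plus Poincar\'e on the mean-zero part of $L^{p'}(\Omega;E^*)$), and then control $\|C_E\tilde T\zeta\|_p$ via Meyer's inequality, the commutation relation of Proposition \ref{commutation-relation}, the subordination formula, and the multiplier $\sqrt{k/(k+1)}$. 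Two points deserve a sentence each in a polished write-up: (i) passing from boundedness of $\tilde T^*$ to boundedness of $\tilde T$ only uses the easy inequality $|\inprod{R,S}_{\mathrm{Tr}}|\le\|R\|_{\gamma(H,E)}\|S\|_{\gamma(H,E^*)}$ together with the fact that $L^{p'}(\Omega;E^*)$ norms $L^p(\Omega;E)$, so no identification $\gamma(H,E)^*\cong\gamma(H,E^*)$ is needed; and (ii) the commutation relation reads $D\delta u = u+\delta(I_H(Du))$, so the transposition $I_H$ should be carried along (it is an isometry on $\gamma^2(H,E)$, hence harmless for the norm estimates). With those caveats the proof is complete.
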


For Hilbert spaces $H_1$ and $H_2$, let us denote by $I_{H_1, H_2}$ the isomorphism
\begin{align}\label{eq:isomorphism I_H1H2}
I_{H_1, H_2}: \gamma(H_1, \gamma(H_2,E)) \to \gamma(H_2,\gamma(H_1,E)),
\end{align}
which is defined by $((I_{H_1, H_2} R)(h_2))(h_1)  = (R h_1)(h_2)$ for $h_1\in H_1$ and $h_2\in H_2$. We will write $I_{H_1} = I_{H_1, H_1}$.
The following proposition gives a certain commutation relation between $D$ and $\delta$.
\begin{prop}\label{commutation-relation}
Let $E$ be a \umd Banach space. If $u\in \DD^{2,p}(\gamma(H,E))$, then $\delta(u) \in \DD^{1,p}(E)$ and we have the relation
\[D(\delta(u)) = u + \delta(I_{H}(Du)).\]
\end{prop}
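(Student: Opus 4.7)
The plan is to prove the identity first on a dense subclass by direct computation, and then use closability and continuity of $\delta$ to extend it.

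For the dense subclass, I would start with $u = f \otimes R$ where $f \in \mathscr{S}$ and $R = \sum_{j=1}^N h_j \otimes x_j$ is a finite rank operator in $\gamma(H,E)$ built from some ONB $\{h_1,\dots,h_N\}$ of a finite dimensional subspace of $H$, chosen to contain all the directions appearing in $f$ (so that $Df = \sum_i g_i h_i$ with $g_i \in \mathscr{S}$). Lemma \ref{sec:div-oprtr;lemma:simple-functions} gives $\delta(u) = \sum_j (W(h_j) f - \langle Df, h_j\rangle) x_j$, which is smooth, so I can compute $D\delta(u)$ directly by the product rule, producing three terms: $fRk$, $\langle Df, k\rangle \sum_j W(h_j) x_j$, and a term involving $D^2 f(h_j, k)$. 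On the other hand, using the definition of $I_H$, the element $I_H(Du) \in \gamma(H,\gamma(H,E))$ expands as $\sum_{i,j} g_i \otimes (h_j \otimes (h_i \otimes x_j))$, to which Lemma \ref{sec:div-oprtr;lemma:simple-functions} applies termwise (now with $E$ replaced by $\gamma(H,E)$). Matching the two sides reduces to the symmetry $\partial_i \partial_j \varphi = \partial_j\partial_i \varphi$ of the second derivative of the smooth function $\varphi$ generating $f$. By linearity the identity then holds on $\mathscr{S}\otimes \gamma_{\mathrm{fin}}(H,E)$.

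Next I would upgrade by density: since finite-rank operators are dense in $\gamma(H,E)$ and $\mathscr{S}\otimes \gamma(H,E)$ is dense in $\DD^{2,p}(\gamma(H,E))$, elementary estimates of the form $\|f\otimes(R-R_n)\|_{\gamma(H,\gamma(H,E))}\leq \|f\|\,\|R-R_n\|_{\gamma(H,E)}$ show that $\mathscr{S}\otimes\gamma_{\mathrm{fin}}(H,E)$ is dense in $\DD^{2,p}(\gamma(H,E))$. Pick $u_n \in \mathscr{S}\otimes\gamma_{\mathrm{fin}}(H,E)$ with $u_n \to u$ in $\DD^{2,p}(\gamma(H,E))$. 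Continuity of $\delta$ from $\DD^{1,p}$ to $L^p$ (Proposition \ref{delta-continuity-thrm}) yields $\delta(u_n)\to \delta(u)$ in $L^p(\O;E)$. Since $I_H$ is a bounded linear isomorphism (hence, by Proposition \ref{ChainRule-2} applied to the linear map $I_H$, commutes with $D$ in the sense $D(I_H \xi)(k) = I_H(D\xi(k))$), the convergence $D^2 u_n \to D^2 u$ transfers to convergence $I_H(Du_n)\to I_H(Du)$ in $\DD^{1,p}(\gamma(H,\gamma(H,E)))$, so Proposition \ref{delta-continuity-thrm} (applied with $E$ replaced by $\gamma(H,E)$, which is \umd by Facts \ref{factsgamma}) gives $\delta(I_H(Du_n))\to \delta(I_H(Du))$ in $L^p(\O;\gamma(H,E))$. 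Thus the right-hand side $u_n + \delta(I_H(Du_n))$ converges in $L^p(\O;\gamma(H,E))$, and by closability of $D$ I conclude $\delta(u)\in \DD^{1,p}(E)$ with $D\delta(u) = u + \delta(I_H(Du))$.

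The main obstacle I anticipate is the bookkeeping in Step 1: three different copies of $H$ appear (the $u$-index over which $\delta$ integrates, the derivative direction $k$, and the second derivative direction), and one must be careful to apply $I_H$ to the correct pair. Recasting $I_H(Du)$ in the form $\sum_{i,j}(\text{random scalar})\otimes(\text{deterministic }\gamma(H,\gamma(H,E))\text{-operator})$ is the step where this bookkeeping becomes delicate, but once it is done, Lemma \ref{sec:div-oprtr;lemma:simple-functions} applied with target space $\gamma(H,E)$ handles the rest, and symmetry of second partial derivatives closes the identification.
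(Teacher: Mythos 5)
Your proposal is correct and follows essentially the same route as the paper: verify the identity on smooth cylindrical elements by an explicit computation with Lemma \ref{sec:div-oprtr;lemma:simple-functions} (with the second-derivative terms matching via symmetry of mixed partials), then pass to general $u\in\DD^{2,p}(\gamma(H,E))$ using the continuity of $\delta$ on $\DD^{1,p}$ from Proposition \ref{delta-continuity-thrm} together with the closedness of $D$. The only cosmetic difference is that the paper performs the base computation for $E=\RR$ and lifts it to general \umd $E$ by testing against functionals $x^*\in E^*$, whereas you carry out the finite-rank vector-valued computation directly; both are fine.
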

\begin{proof}
First, let $E=\RR$, and $u = f(W(h_1), \ldots, W(h_n))\otimes h$, with $h_1, \ldots, h_n\in H$ orthonormal and $h\in H$ such that $\|h\|=1$. We can use Lemma \ref{sec:div-oprtr;lemma:simple-functions} to obtain
\[D(\delta(u)) = \Big(\sum_{j=1}^n \partial_j f \otimes (h_j\otimes h)\Big)W(h) + f\otimes h -\sum_{j,k=1}^n \partial_{jk}f\otimes \lb h, h_j\rb h_k.\]
Another computation yields
\[ \delta(I_{H}(Du)) = \Big(\sum_{j=1}^n \partial_j f \otimes (h_j\otimes h)\Big)W(h) -\sum_{j,k=1}^n \partial_{jk}f\otimes \lb h, h_j\rb h_k.\]
The commutation relation can be extended by linearity. Now let $E$ be a \umd Banach space, $u\in \mathscr{S} \otimes \g(H,E)$. The commutation relation holds for $\lb u, x^*\rb$ for all $x^* \in E^*$, and hence it holds for $u$. For general $u\in \D^{2,p}(\g(H,E))$, the identity follows from Proposition \ref{delta-continuity-thrm} and an approximation argument.
\end{proof}
An immediate consequence is that Proposition \ref{delta-continuity-thrm} extends to $\D^{k,p}(\g(H,E))$ for $k\geq 1$.
\begin{clry}\label{sec:div-oprtr;clry:delta-cts-hghr-ordr}
Let $E$ be a \umd Banach space, $1<p<\infty$, and $k\geq 1$. The operator $\delta$ is continuous from $\D^{k,p}(\g(H,E))$ to $\D^{k-1, p}(E)$.
\end{clry}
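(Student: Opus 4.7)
The natural approach is induction on $k$, with the base case $k=1$ being exactly Proposition \ref{delta-continuity-thrm}. Since $\g(H,E)$ is again a \umd space by Facts \ref{factsgamma}, we may apply the inductive hypothesis with $E$ replaced by any finite iteration $\g^j(H,E)$, which is the feature that allows the recursion to close.

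For the induction step, suppose the statement holds for $k-1$ (for every \umd target space). Fix $u\in\D^{k,p}(\g(H,E))$ with $k\geq 2$. Since $k\geq 2$, Proposition \ref{commutation-relation} applies and gives $\delta(u)\in\D^{1,p}(E)$ together with the commutation identity
\[ D\delta(u) = u + \delta(I_H(Du)). \]
I would then argue that the right-hand side lies in $\D^{k-2,p}(\g(H,E))$ with norm controlled by $\|u\|_{\D^{k,p}(\g(H,E))}$, and finally invoke Corollary \ref{cor:Dstar=D} (applied to $\delta(u)\in L^p(\Omega;E)$) to upgrade this to $\delta(u)\in\D^{k-1,p}(E)$.

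The first term is trivial: $u\in\D^{k,p}(\g(H,E))\subseteq\D^{k-2,p}(\g(H,E))$. For the second, one first observes that $Du\in\D^{k-1,p}(\g(H,\g(H,E)))$. Indeed, $Du$ lies in $L^p(\Omega;\g(H,\g(H,E)))$ and $D^j(Du)=D^{j+1}u\in L^p$ for $1\leq j\leq k-1$, so Corollary \ref{cor:Dstar=D} (using that $\g(H,E)$ is \umd) yields the claim with a continuous estimate. Since $I_H$ is a bounded linear isomorphism between fixed Banach spaces, it commutes with $D$ pointwise on $\mathscr{S}\otimes\g(H,\g(H,E))$ by an easy direct check, and hence extends to a bounded map on $\D^{k-1,p}(\g(H,\g(H,E)))$ by density and the closedness of $D$. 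Therefore $I_H(Du)\in\D^{k-1,p}(\g(H,\g(H,E)))$, and the induction hypothesis applied with the \umd space $\g(H,E)$ in place of $E$ gives
\[ \delta(I_H(Du))\in\D^{k-2,p}(\g(H,E)),\qquad \|\delta(I_H(Du))\|_{\D^{k-2,p}(\g(H,E))}\lesssim \|u\|_{\D^{k,p}(\g(H,E))}. \]

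Combining, $D\delta(u)\in\D^{k-2,p}(\g(H,E))$ continuously, i.e.\ $D^i\delta(u)\in L^p(\Omega;\g^i(H,E))$ for $1\leq i\leq k-1$ with the required bounds. Together with $\delta(u)\in L^p(\Omega;E)$ from the base case, Corollary \ref{cor:Dstar=D} gives $\delta(u)\in\D^{k-1,p}(E)$ with $\|\delta(u)\|_{\D^{k-1,p}(E)}\lesssim_{p,E,k}\|u\|_{\D^{k,p}(\g(H,E))}$, completing the induction. The main potential obstacle is the apparent escalation of target spaces introduced by $I_H(Du)$, but this is handled precisely because the $\g(H,\cdot)$ construction preserves the \umd property, so each application of the inductive hypothesis remains legitimate.
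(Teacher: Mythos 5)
Your strategy---induction on $k$ through the commutation relation of Proposition \ref{commutation-relation}, with Proposition \ref{delta-continuity-thrm} as base case and the fact that $\g(H,E)$ inherits the \umd property (Facts \ref{factsgamma}) to close the recursion---is exactly what the paper intends when it presents the corollary as an immediate consequence of Proposition \ref{commutation-relation}, and your quantitative bookkeeping (both $u$ and $\delta(I_H(Du))$ lie in $\D^{k-2,p}(\g(H,E))$ with norms controlled by $\|u\|_{\D^{k,p}(\g(H,E))}$) is correct.

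The weak point is the two appeals to Corollary \ref{cor:Dstar=D}. For $Du$ the appeal is circular but harmless: membership in $\D_*^{k-1,p}(\g^2(H,E))$ already requires $Du\in\D^{k-1,1}(\g^2(H,E))$, and the identity $D^j(Du)=D^{j+1}u$ presupposes that $Du$ lies in the iterated domain; however, $Du\in\D^{k-1,p}(\g^2(H,E))$ with the desired bound follows directly from the definition of $\D^{k,p}$ by approximating $u$ with elements of $\mathscr{S}\otimes\g(H,E)$, whose derivatives are again smooth, so no corollary is needed there. The genuine gap is the final upgrade: Corollary \ref{cor:Dstar=D} with exponent $k-1$ requires a priori that $\delta(u)\in\D^{k-1,1}(E)$, i.e.\ membership in the closure-defined space, which is essentially what you are trying to establish; what you actually have is the iterated information $\delta(u)\in\D^{1,p}(E)$ and $D\delta(u)\in\D^{k-2,p}(\g(H,E))$. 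For $k=2$ this is fine (then $\delta(u)\in\D^{1,p}(E)\subset\D^{1,1}(E)$ and nothing needs upgrading), but for $k\geq 3$ the corollary does not apply as stated. The gap is repairable with tools from the paper: either use the iterated form of the weak characterization recorded in the remark following Theorem \ref{thm:pind and D-weak=strong-UMD} (equivalently, rerun the Ornstein--Uhlenbeck semigroup argument of that theorem with $F=\delta(u)$ and $\xi=D^{k-2}(D\delta(u))\in L^p(\Omega;\g^{k-1}(H,E))$), or, closer to the paper's intent, run your induction for $u$ in a dense class on which $\delta(u)$ is already known to lie in every $\D^{m,p}(E)$ (cf.\ Lemma \ref{sec:div-oprtr;lemma:simple-functions}), so that only the norm estimate is at stake, and then pass to general $u\in\D^{k,p}(\g(H,E))$ by density, identifying the limit of $\delta(u_n)$ with $\delta(u)$ via Proposition \ref{delta-continuity-thrm} and the completeness of $\D^{k-1,p}(E)$.
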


Another consequence of \ref{delta-continuity-thrm} is that \cite[Proposition 1.5.8]{Nualart2} extends to the \umd-valued setting.
\begin{prop}\label{div-norm-estimate}
Let $E$ be a \umd space and let $1<p<\infty$. For all $u\in \DD^{1,p}(\gamma(H,E))$, we have
\[ \|\delta(u)\|_{L^p(\Omega;E)} \leq c_p(\|\EE(u)\|_{\gamma(H,E)} + \|Du\|_{L^p(\Omega;\gamma^2(H,E))}).\]
\end{prop}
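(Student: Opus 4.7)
The plan is to reduce the bound to two inputs already established in this section: the continuity of $\delta$ from $\D^{1,p}(\gamma(H,E))$ to $L^p(\O;E)$ (Proposition \ref{delta-continuity-thrm}), and the Poincar\'e inequality (Proposition \ref{prop:poincare}). The continuity estimate gives
\[
\|\delta(u)\|_{L^p(\O;E)} \lesssim_{p,E} \|u\|_{L^p(\O;\gamma(H,E))} + \|Du\|_{L^p(\O;\gamma^2(H,E))},
\]
so the only thing to do is to improve the first term on the right-hand side from an $L^p$-norm of $u$ to the $\gamma(H,E)$-norm of $\E u$. This improvement is exactly the content of Poincar\'e applied to $u$ viewed as a $\gamma(H,E)$-valued random variable.

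The key step is therefore to invoke Proposition \ref{prop:poincare} with the underlying Banach space $\gamma(H,E)$ in place of $E$. This is legitimate because $\gamma(H,E)$ inherits the \umd property from $E$ (Facts \ref{factsgamma}), and because $u\in\D^{1,p}(\gamma(H,E))$ by hypothesis, with $Du\in L^p(\O;\gamma(H,\gamma(H,E)))=L^p(\O;\gamma^2(H,E))$ by definition of the iterated $\gamma$-spaces. Poincar\'e then yields
\[
\|u-\E u\|_{L^p(\O;\gamma(H,E))} \lesssim_{p,E} \|Du\|_{L^p(\O;\gamma^2(H,E))}.
\]

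To finish, I would split $u=(u-\E u)+\E u$, use the triangle inequality together with the fact that $\E u\in\gamma(H,E)$ is deterministic (so its $L^p(\O;\gamma(H,E))$-norm equals $\|\E u\|_{\gamma(H,E)}$), obtaining
\[
\|u\|_{L^p(\O;\gamma(H,E))} \leq \|\E u\|_{\gamma(H,E)} + \|u-\E u\|_{L^p(\O;\gamma(H,E))} \lesssim_{p,E} \|\E u\|_{\gamma(H,E)} + \|Du\|_{L^p(\O;\gamma^2(H,E))},
\]
and then substitute this bound into the continuity estimate for $\delta$ above to conclude. Absorbing a factor of $2$ into the implicit constant gives the claimed inequality with $c_p$ depending on $p$ and $E$.

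There is no real obstacle in this argument; the only point worth checking with care is the applicability of the Poincar\'e inequality in the $\gamma(H,E)$-valued setting, and that is handled by the inheritance of \umd recorded in Facts \ref{factsgamma}. In particular, the definition of $\E u$ as a Bochner integral in $\gamma(H,E)$ is valid since $u\in L^p(\O;\gamma(H,E))$ with $p>1$, and no additional measurability or integrability considerations are needed.
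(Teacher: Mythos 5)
Your proof is correct and follows exactly the paper's own argument: apply the continuity of $\delta$ from $\D^{1,p}(\gamma(H,E))$ to $L^p(\O;E)$, split $u=(u-\E u)+\E u$ by the triangle inequality, and control $u-\E u$ via the Poincar\'e inequality applied in the space $\gamma(H,E)$ (which inherits \umd from $E$). Your explicit check that Poincar\'e applies with $\gamma(H,E)$ as the underlying space is a point the paper leaves implicit, but otherwise the two proofs coincide.
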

\begin{proof}
Since $\delta$ is continuous from $\DD^{1,p}(\gamma(H,E))$ to $L^p(\Omega;E)$, we have
\[\|\delta(u)\|_{L^p(\Omega;E)} \leq c(\|u\|_{L^p(\Omega;\gamma(H,E))} + \|Du\|_{L^p(\Omega;\gamma^2(H,E))}).\]
By the triangle inequality, we have
\[\|u\|_{L^p(\Omega;\gamma(H,E))} \leq \|\EE(u)\|_{\gamma(H,E)} + \|u-\EE(u)\|_{L^p(\Omega;\gamma(H,E))}.\]
Now the result follows from Proposition \ref{prop:poincare}.
\end{proof}

\subsection{Independence of $p$ and weak characterization}
One can formulate the following analogue of Theorem \ref{thm:pind and D-weak=strong-UMD} for the divergence operator $\delta$.

\begin{thrm}\label{thm:pind and delta-weak=strong-UMD}
Let $E$ be a \umd Banach space, $p\in (1, \infty)$ and $k\geq 1$. Let $F\in L^p(\Omega;\g^k(H,E))$ be such that for all $x^*\in E^*$, $\lb F, x^*\rb$ is in $\textrm{Dom}_{1,\R}(\delta^k)$. If there exists a $\xi \in L^p(\Omega;E)$ such that for all $x^*\in E^*$ one has
\[\delta^k\inprod{F, x^*} = \inprod{\xi, x^*},\]
then $F\in \textrm{Dom}_{p,E}(\delta^k)$ and $\delta^k F = \xi$.
\end{thrm}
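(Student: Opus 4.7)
My plan is to adapt the Ornstein--Uhlenbeck smoothing argument of Theorem \ref{thm:pind and D-weak=strong-UMD} from the Malliavin derivative to the divergence. The key replacement is the commutation relation: since $P(t)$ acts as $e^{-nt}$ on the $n$-th Wiener chaos and $\delta$ raises chaos order by one, one obtains the scalar identity
\[\delta\, P_{\gamma(H,\R)}(t) = e^{t}\, P_\R(t)\, \delta \qquad \text{on } \textrm{Dom}_{1,\R}(\delta),\]
which iterates to $\delta^k P_{\gamma^k(H,\R)}(t) = e^{kt} P_\R(t)\delta^k$ on $\textrm{Dom}_{1,\R}(\delta^k)$. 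The exponential factor is the sign-flipped analogue of the $e^{-kt}$ appearing for $D^k$ in the earlier theorem.

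For $t>0$ set $F_t := P_{\gamma^k(H,E)}(t)F$. Since $\gamma^k(H,E)$ inherits the \umd property from $E$ (Facts \ref{factsgamma}), the semigroup $(P_{\gamma^k(H,E)}(t))_{t\geq 0}$ is analytic on $L^p(\O;\gamma^k(H,E))$, so $F_t\in \D^{\infty,p}(\gamma^k(H,E))$ by Meyer's inequalities (Theorem \ref{Meyer's inequalities}), and hence $F_t\in \textrm{Dom}_{p,E}(\delta^k)$ by the iterated form of Corollary \ref{sec:div-oprtr;clry:delta-cts-hghr-ordr}. For any $x^*\in E^*$ we have $\inprod{F_t, x^*} = P_{\gamma^k(H,\R)}(t)\inprod{F, x^*}$, and $\delta^k$ commutes with pairing against $x^*$ (verified by testing the defining duality of $\delta$ against tensors $G = g\otimes x^*\in \mathscr{S}\otimes E^*$, iterated $k$ times). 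Combining these observations with the hypothesis $\delta^k\inprod{F, x^*} = \inprod{\xi, x^*}$ gives
\[\inprod{\delta^k F_t, x^*} = \delta^k\inprod{F_t, x^*} = e^{kt}P_\R(t)\inprod{\xi, x^*} = \inprod{e^{kt}P_E(t)\xi, x^*},\]
and since $x^*$ is arbitrary, $\delta^k F_t = e^{kt}P_E(t)\xi$ in $L^p(\O;E)$.

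Passing to the limit $t_n\downarrow 0$, strong continuity yields $F_{t_n}\to F$ in $L^p(\O;\gamma^k(H,E))$ and $\delta^k F_{t_n}\to \xi$ in $L^p(\O;E)$; the conclusion then follows from the closedness of $\delta^k$. This closedness is the main technical obstacle, since only $\delta$ itself is known to be closed: closedness in the iterated sense of $\textrm{Dom}_{p,E}(\delta^k)$ further demands $L^p$-convergence of each intermediate $\delta^j F_{t_n}$ for $1\leq j\leq k-1$. I plan to handle this by induction on $k$. The base case $k=1$ is immediate from the closedness of $\delta$. For the inductive step, the same semigroup identity applied at order $j<k$ shows $\inprod{\delta^j F_t, x^*} = e^{jt}P_\R(t)\delta^j\inprod{F, x^*}$, and an interpolation between the endpoints $\|F\|_{L^p}$ and $\|\delta^k\inprod{F, x^*}\|_{L^p}=\|\inprod{\xi, x^*}\|_{L^p}$ (afforded by Meyer's multiplier theorem) produces a uniform-in-$x^*$ $L^p$-bound on $\delta^j\inprod{F, x^*}$, from which a representative $\eta_j\in L^p(\O;\gamma^{k-j}(H,E))$ with $\inprod{\eta_j, x^*}=\delta^j\inprod{F, x^*}$ can be extracted. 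Then $\delta^j F_{t_n}\to \eta_j$ in $L^p$, and successive application of the closedness of $\delta$ places $F$ in the iterated $\textrm{Dom}_{p,E}(\delta^k)$ with $\delta^k F = \xi$.
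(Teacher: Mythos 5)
Your argument reproduces the paper's proof almost step for step: the regularization $F_t = P_{\g^k(H,E)}(t)F$ using analyticity of the Ornstein--Uhlenbeck semigroup on the \umd space $\g^k(H,E)$, the inclusion $F_t\in \D^{\infty,p}(\g^k(H,E))\subseteq \textrm{Dom}_{p,E}(\delta^k)$ via Meyer's inequalities and Corollary \ref{sec:div-oprtr;clry:delta-cts-hghr-ordr}, the commutation relation $\delta^k P_{\g^k(H,\R)}(t) = e^{kt}P_\R(t)\delta^k$ obtained by dualizing $D^kP(t)=e^{-kt}P(t)D^k$, the weak identification $\delta^k F_t = e^{kt}P_E(t)\xi$, and the limit $t_n\downarrow 0$ combined with closedness of $\delta^k$. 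The only divergence is your final paragraph: the paper simply invokes closedness of $\delta^k$ and stops, which is legitimate once $\textrm{Dom}_{p,E}(\delta^k)$ is read in the natural way extending the paper's definition of $\delta$, namely via the duality relation $\EE\inprod{\zeta,D^kG}_{\textrm{Tr}}=\EE\inprod{\xi,G}$ against $G\in\mathscr{S}\otimes E^*$; an operator defined by such a duality against a fixed dense test class is closed for exactly the same elementary reason $\delta$ itself is, so no induction over intermediate iterates is needed. If you insist on the iterated-composition reading of $\delta^k$, your induction is the right instinct, but the step where you extract a representative $\eta_j\in L^p(\O;\g^{k-j}(H,E))$ from a uniform-in-$x^*$ bound on the scalar quantities $\delta^j\inprod{F,x^*}$ is not justified as stated: uniform bounds on weak pairings do not by themselves produce a $\g^{k-j}(H,E)$-valued random variable. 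The clean repair is to bound $\|\delta^jF_{t_n}\|_{L^p(\O;\g^{k-j}(H,E))}$ uniformly (via Corollary \ref{sec:div-oprtr;clry:delta-cts-hghr-ordr} together with Corollary \ref{cor:poincare}) and use weak compactness in the reflexive space $L^p(\O;\g^{k-j}(H,E))$, in the spirit of Lemma \ref{cr-tech-lemma}, rather than an interpolation of scalar norms.
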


\begin{proof}
Since $E$ is a \umd Banach space, $\g^k(H,E)$ is as well, and as in the proof of Theorem \ref{thm:pind and D-weak=strong-UMD} we obtain that $P_{\g^k(H,E)}(t)$ is an analytic semigroup on $L^p(\Omega;\g^k(H,E))$ and
\[P_{\g^k(H,E)}(t) F \in \cap_{j\geq 1}\textrm{Dom}(L^j_{\g(H,E)}) \subset \DD^{k,p}(E)\subseteq \textrm{Dom}_{p,E}(\delta^k)\] for all $t>0$, where
the last inclusion follows from Corollary \ref{sec:div-oprtr;clry:delta-cts-hghr-ordr}.

By the symmetry of $(P(t))_{t\geq 0}$ and a duality argument, it follows from \cite[Lemma 6.2]{janmaas} that  $\delta^k (P_{\g^k(H,\R)}(t) G)  = e^{kt}P_{\RR}\delta^k G$ for all $G\in \textrm{Dom}_{1,\R}(\delta)$.
Hence for all $x^*\in E^*$,
\begin{align*}
\inprod{\delta^k (P_{\g^k(H,E)}(t)F), x^*} &= \delta^k \inprod{P_{\g^k(H,E)}(t) F, x^*} = \delta^k(P_{\g^k(H,\R)}(t)\inprod{F, x^*})\\& = e^{kt}P_{\RR}\delta^k (\inprod{F, x^*}) = e^{kt}P_{\RR}\inprod{\xi, x^*} = \inprod{e^{kt}P_{E}\xi, x^*}.
\end{align*}
Therefore, $\delta^k(P_{\g^k(H,E)}(t)F) = e^{kt}P_{E}(t)\xi$.

Now, let $t_n \downarrow 0$ as $n\to \infty$, and set $F_n = P_{\g^k(H,E)}(t_n)F$. Then, by the strong continuity of $(P(t))_{t\geq 0}$, we get
$F_n \to F$ in $L^p(\Omega;\g^k(H,E))$ and $\delta^k F_n \to \xi$ in $L^p(\Omega;E)$.
Hence, by closedness of $\delta^k$, we get $F\in \textrm{Dom}_{p,E}(\delta^k)$ and $\delta^k F = \xi$.
\end{proof}

\begin{rmrk}
If $H$ is replaced with $L^2(0,T;H)$ and $F:(0,T)\times\O\to \calL(H,E)$ is adapted, a weak characterization of the stochastic integral was given in \cite{NVW1} without assumptions on the filtration. Theorem \ref{thm:pind and delta-weak=strong-UMD} can be viewed as an extension to the non-adapted setting, but only under the additional assumption that the filtration is generated by $W$.
\end{rmrk}

\subsection{Additional results}

The next lemma is an integration by parts formula for the divergence operator.
\begin{lemma}[Integration by parts]\label{delta-intbyparts}
Let $E$ be a Banach space, and $p,q,r\in [1, \infty)$ such that $\frac{1}{p} + \frac{1}{q} = \frac{1}{r}$. Let $u\in L^p(\Omega;\gamma(H,E))$ and $F\in \DD^{1,q}(E^*)$. If $u\in \mathrm{Dom}(\delta)$, then $\inprod{u, F} \in \mathrm{Dom}_{r,\R}(\delta)$ and
\[\delta(\inprod{u, F}) = \inprod{\delta(u), F}_{E, E^*} - \inprod{u, DF}_{\mathrm{Tr}}.\]
\end{lemma}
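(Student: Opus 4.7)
The plan is to verify directly the defining duality for the scalar-valued divergence operator applied to the $H$-valued random variable $\inprod{u,F}$, using the product rule on $GF$ for $G \in \mathscr{S}$ scalar. The key algebraic observation is that $D(GF) = G\, DF + DG \otimes F$ in $\gamma(H, E^*)$, where $DG \otimes F$ denotes the rank-one tensor $h \mapsto \inprod{DG,h}_H F$ (identifying $DG \in \gamma(H, \R)$ with an element of $H$). For $F \in \mathscr{S} \otimes E^*$ this is a direct finite-dimensional calculation; for general $F \in \DD^{1,q}(E^*)$ one approximates $F$ by $F_n \in \mathscr{S} \otimes E^*$ in the $\DD^{1,q}$-norm and notes that $GF_n \in \mathscr{S} \otimes E^*$ since $\mathscr{S}$ is an algebra.

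First I would apply the definition of $u \in \mathrm{Dom}(\delta)$ to the test function $GF_n \in \mathscr{S} \otimes E^*$ to obtain
\[ \EE \inprod{u, G\, DF_n + DG \otimes F_n}_{\mathrm{Tr}} = \EE \inprod{\delta(u), GF_n}_{E,E^*}. \]
The constraint $\tfrac{1}{p} + \tfrac{1}{q} = \tfrac{1}{r} \leq 1$ forces $q \geq p'$, so on the probability space $F \in L^{p'}(\O;E^*)$ and $DF \in L^{p'}(\O;\gamma(H,E^*))$; H\"older together with the trace inequality $|\inprod{R,S}_{\mathrm{Tr}}| \leq \|R\|_{\gamma} \|S\|_{\gamma}$ show that both sides above are Cauchy in $n$ and pass to the limit, yielding
\[ \EE G \inprod{u, DF}_{\mathrm{Tr}} + \EE \inprod{u, DG \otimes F}_{\mathrm{Tr}} = \EE G \inprod{\delta(u), F}_{E,E^*}. \]

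Second, I would identify the middle trace pairing by a basis computation: with $(h_n)_{n\geq 1}$ an orthonormal basis of $H$, the expansion $(DG \otimes F)(h_n) = \inprod{DG, h_n}_H F$ gives
\[ \inprod{u, DG \otimes F}_{\mathrm{Tr}} = \sum_{n\geq 1} \inprod{DG, h_n}_H \inprod{u h_n, F}_{E, E^*} = \inprod{\inprod{u, F}, DG}_H, \]
using that $\inprod{u,F}$ is the $H$-valued random variable $u^* F$. Rearranging the previous display now gives
\[ \EE \inprod{\inprod{u,F}, DG}_H = \EE G \bigl( \inprod{\delta(u), F}_{E,E^*} - \inprod{u, DF}_{\mathrm{Tr}} \bigr). \]

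Finally, by H\"older and the same trace estimate, the random variable $\inprod{\delta(u), F}_{E,E^*} - \inprod{u, DF}_{\mathrm{Tr}}$ lies in $L^r(\O;\R)$ and $\inprod{u,F} \in L^r(\O;H)$. Since the displayed identity holds for every $G \in \mathscr{S}$, which is the defining class of test functions for the scalar-valued divergence operator on $H$-valued inputs, this is precisely the statement that $\inprod{u,F} \in \mathrm{Dom}_{r, \R}(\delta)$ with $\delta(\inprod{u,F}) = \inprod{\delta(u),F}_{E,E^*} - \inprod{u, DF}_{\mathrm{Tr}}$. The main technical step is verifying the convergence $GF_n \to GF$ in $\DD^{1,p'}(E^*)$ strongly enough to pass the $\delta$-duality to the limit; once this and the basis computation for the trace pairing are in place, the lemma reduces to algebraic rearrangement.
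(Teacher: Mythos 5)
Your proposal is correct and follows essentially the same route as the paper: apply the divergence duality to the test function $GF$, use the product rule $D(GF) = G\,DF + DG\otimes F$, and identify $\inprod{u, DG\otimes F}_{\mathrm{Tr}} = \inprod{\inprod{u,F},DG}_H$ by a basis expansion. The only difference is cosmetic: you approximate $F$ by elements of $\mathscr{S}\otimes E^*$ up front and track the limit passage explicitly, whereas the paper performs the computation formally and defers this to a closing ``density argument''; your version makes that step precise but is not a different proof.
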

\begin{proof}
Let $G\in \mathscr{S}$. Identifying $H$ with its dual, one obtains
\[ \inprod{DG, \inprod{u, F}}_H = \inprod{u, D G \otimes F)}_{\mathrm{Tr}}.\]
With Lemma \ref{lem:productrule}, we get
\begin{align*}
\EE\inprod{\inprod{u, F}, DG}_H & = \EE\inprod{u, DG\otimes F)}_{\mathrm{Tr}} \\
&= \EE\inprod{u, D(GF)}_{\mathrm{Tr}} - \EE\inprod{u, GDF}_{\mathrm{Tr}} \\
&= \EE\inprod{\delta(u), GF}_{E, E^*} - \EE\inprod{u, GDF}_{\mathrm{Tr}} \\
&= \EE (G\inprod{\delta(u), F}_{E, E^*}) - \EE (G\inprod{u, DF}_{\mathrm{Tr}}).
\end{align*}
Therefore, $\inprod{u,F}\in \mathrm{Dom}_{r,E}(\delta)$ and the identity follow.
Since $G$ was arbitrary, this yields the result by a density argument.
\end{proof}

The next lemma gives a relationship between the operators $D$, $\delta$ and $L$.
\begin{lemma}\label{sec:div-oprtr;rltion-D-delta-L}
Let $E$ be a \umd Banach space and $p\in (1,\infty)$. If $u\in \D^{2,p}(E)$, then $\delta (Du) = -Lu$.
\end{lemma}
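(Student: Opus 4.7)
The plan is to prove the identity first on a dense subspace of smooth random variables by direct computation, then extend by a density/continuity argument using the continuity of both sides as maps $\D^{2,p}(E)\to L^p(\O;E)$.

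For the direct computation, I would take $u = f(W(h_1),\ldots,W(h_n))\otimes x$ with $h_1,\ldots,h_n$ orthonormal in $H$ and $x\in E$, and extend $\{h_i\}_{i=1}^n$ to an orthonormal basis $(h_j)_{j\geq 1}$ of $H$. By definition,
\[
Du = \sum_{i=1}^n \partial_i f(W(h_1),\ldots,W(h_n))\otimes (h_i\otimes x).
\]
Applying Lemma \ref{sec:div-oprtr;lemma:simple-functions} to each summand with $R = h_i\otimes x$ (noting that $(h_i\otimes x)h_j = \delta_{ij}\,x$ and $D\partial_i f = \sum_k \partial_{ik}f\cdot h_k$), I obtain
\[
\delta(Du) = \sum_{i=1}^n W(h_i)\partial_i f\otimes x \;-\; \sum_{i=1}^n \partial_{ii}f\otimes x.
\]
On the other hand, the Ornstein--Uhlenbeck generator acts in coordinates as $Lg(y) = \Delta g(y) - y\cdot\nabla g(y)$ on smooth cylindrical functions, which gives exactly
\[
Lu = \Bigl(\sum_{i=1}^n \partial_{ii}f - \sum_{i=1}^n W(h_i)\partial_i f\Bigr)\otimes x,
\]
so $\delta(Du) = -Lu$ on $\mathscr{S}\otimes E$, and by linearity on all of $\mathscr{P}\otimes E$.

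To extend to $u\in \D^{2,p}(E)$, I would verify that both sides are continuous linear maps from $\D^{2,p}(E)$ to $L^p(\O;E)$. For the right-hand side, Meyer's inequalities (Theorem \ref{Meyer's inequalities} with $n=2$) identify $\mathrm{Dom}(L)\supseteq\mathrm{Dom}(C^2)$ on $L^p(\O;E)$ with $\D^{2,p}(E)$ and yield $\|Lu\|_{L^p(\O;E)} = \|C^2 u\|_{L^p(\O;E)} \lesssim_{p,E}\|u\|_{\D^{2,p}(E)}$. For the left-hand side, $D$ is continuous from $\D^{2,p}(E)$ into $\D^{1,p}(\gamma(H,E))$ by the very definition of the norm on $\D^{2,p}(E)$, and then Proposition \ref{delta-continuity-thrm} gives that $\delta$ is continuous from $\D^{1,p}(\gamma(H,E))$ into $L^p(\O;E)$. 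Composing, $\delta\circ D$ is continuous from $\D^{2,p}(E)$ to $L^p(\O;E)$.

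Finally, by Proposition \ref{prop:denseP} the subspace $\mathscr{P}\otimes E$ is dense in $\D^{2,p}(E)$, so the identity proved on the smooth subspace extends to all of $\D^{2,p}(E)$ by continuity. There is no real obstacle here—the only point that needs care is the bookkeeping in the smooth computation (in particular handling the sum $\sum_j W(h_j)f\otimes Rh_j$ correctly when $R = h_i\otimes x$), and verifying that the UMD hypothesis is invoked only through Meyer's inequalities and Proposition \ref{delta-continuity-thrm}, both of which require it.
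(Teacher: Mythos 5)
Your proof is correct and takes essentially the same route as the paper: establish the identity on smooth cylindrical random variables (the paper simply cites the scalar case \cite[Proposition 1.4.3]{Nualart2}, while you carry out the computation directly from Lemma \ref{sec:div-oprtr;lemma:simple-functions}), and then extend by density using the continuity of $\delta$ from Proposition \ref{delta-continuity-thrm} and the continuity of $L$ on $\D^{2,p}(E)$ coming from Meyer's inequalities. One minor point: the step ``by linearity on all of $\mathscr{P}\otimes E$'' is neither needed nor literally justified (Lemma \ref{sec:div-oprtr;lemma:simple-functions} is stated for $f\in\mathscr{S}$, not for polynomials); it suffices to observe that $\mathscr{S}\otimes E$ is dense in $\D^{2,p}(E)$ by the very definition of that space, so the approximation argument can be run on $\mathscr{S}\otimes E$ directly.
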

\begin{proof}
Note that by Meyer's inequalities, we have $u\in \mathrm{Dom}(L)$. If $u\in \mathscr{S}\otimes E$, the claim follows from the scalar case (see \cite[Proposition 1.4.3]{Nualart2}). The general case follows from an approximation argument and Proposition \ref{delta-continuity-thrm} and Meyer's inequalities.
\end{proof}

\subsection{Preliminaries on the Skorohod integral}
In this section we recall the vector-valued It\^o integral and its extension to the non-adapted setting.

Assume $H = L^2(0,T;U)$ for some separable Hilbert space $U$, and some $T>0$.
The family $(W_U(t))_{t\in [0,T]}$ of mappings from $U$ to $L^2(\Omega)$ given by
\[W_U(t)u := W(\mathbf{1}_{[0,t]} \otimes u)\]
is a {\em $U$-cylindrical Brownian motion}.
For any $t\in [0,T]$, we denote by $\mathscr{F}_t$ the $\sigma$-algebra generated by $\{W_U(s)u:\ 0\leq s\leq t,\ u\in U\}$. Note that $\FF := (\mathscr{F}_t)_{t\in [0,T]}$ is a filtration. Let $\Phi: [0,T]\times \Omega \to \calL(U,E)$ be a {\em finite rank adapted step function}:
\[ \Phi(t, \omega) := \sum_{i=1}^m \sum_{j=1}^n \mathbf{1}_{(t_{i-1}, t_i]}(t) \mathbf{1}_{A_{ij}}(\omega) \sum_{k=1}^l u_k \otimes x_{ijk},\]
where $A_{ij}\in \mathscr{F}_{t_{i-1}}$ are disjoint for each $j$, and $(u_k)$ are orthonormal in $U$. For such processes, {\em the stochastic integral ${\rm Int}(\Phi)\in L^p(\O;E)$ with respect to $W_U$} is defined by
\[ {\rm Int}(\Phi) := \int_0^T \Phi(t)\; dW_U(t) := \sum_{i=1}^m\sum_{j=1}^n\sum_{k=1}^l \mathbf{1}_{A_{ij}} (W(t_i)u_k - W(t_{i-1})u_k) \otimes x_{ijk}.\]
Let $L^p_{\FF}(\Omega;\gamma(H,E))$ be the closure of the set of adapted finite rank step functions in $L^p(\Omega;\gamma(H,E))$.
Recall the following results (see \cite[Theorem 3.5]{NVW1} and \cite[Theorem 5.4]{janjan} respectively).
\begin{thrm}[Stochastic integral I]\label{thm:stochintI}
Let $E$ be a \umd Banach space and let $1<p<\infty$. The stochastic integral uniquely extends to a bounded operator
\[{\rm Int}: L^p_\FF(\Omega;\gamma(H,E)) \to L^p(\Omega;E).\]
In this case the process $(t,\omega)\mapsto {\rm Int}(\one_{[0,t]}\Phi)(\omega)$ has a continuous version and
for all $\Phi \in L^p_\FF(\Omega;\gamma(H,E))$ we have the two-sided estimate
\[\|{\rm Int}(\Phi)\|_{L^p(\Omega;C([0,T];E))} \eqsim_{p,E} \|\Phi\|_{L^p(\Omega;\gamma(H,E))}.\]
\end{thrm}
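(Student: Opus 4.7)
The plan is to prove the two-sided estimate first for finite rank adapted step functions $\Phi$, extend to all of $L^p_\FF(\O;\g(H,E))$ by density, and finally upgrade the bound to the $C([0,T];E)$-valued estimate via a martingale maximal inequality.

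For a finite rank adapted step function
\[
\Phi = \sum_{i,j,k}\one_{(t_{i-1},t_i]}\one_{A_{ij}}\, u_k\otimes x_{ijk},
\]
the stochastic integral
\[
{\rm Int}(\Phi) = \sum_{i,j,k} \one_{A_{ij}}\,\bigl(W_U(t_i)u_k - W_U(t_{i-1})u_k\bigr)\otimes x_{ijk}
\]
is a sum of martingale differences with respect to $(\F_{t_i})_i$. The key input is the Garling--McConnell decoupling inequality for \umd spaces: for \umd $E$ the $L^p(\O;E)$-norm of such a Gaussian martingale transform is equivalent, with constants depending only on $p$ and the \umd-constant of $E$, to the $L^p$-norm of the decoupled tangent sequence
\[
\sum_{i,j,k} \one_{A_{ij}}\,\tilde\xi_{ik}\otimes x_{ijk},
\qquad \tilde\xi_{ik} := \tilde W_U(t_i)u_k - \tilde W_U(t_{i-1})u_k,
\]
where $\tilde W_U$ is an independent copy of $W_U$ on an auxiliary space $(\tilde\O,\tilde\P)$, independent of $\F$. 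Conditioning on $\F$, the family $(\tilde\xi_{ik})$ is jointly centered Gaussian with covariance $(t_i-t_{i-1})\delta_{ii'}\delta_{kk'}$, so by the definition of the $\g$-norm together with the Kahane--Khintchine inequalities one obtains
\[
\Bigl(\tilde\E\Bigl\|\sum_{i,j,k}\one_{A_{ij}}\tilde\xi_{ik}\otimes x_{ijk}\Bigr\|_E^p\Bigr)^{1/p}
\eqsim_p \|\Phi(\cdot,\omega)\|_{\g(L^2(0,T;U),E)}
\]
pointwise in $\omega$. Integrating in $\omega$ and invoking the $\g$-Fubini Lemma \ref{sec:gamma,Fubini-lemma} identifies the right-hand side with $\|\Phi\|_{L^p(\O;\g(H,E))}$, yielding the two-sided estimate on step functions; density then extends it to all of $L^p_\FF(\O;\g(H,E))$.

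For pathwise continuity and the $L^p(\O;C([0,T];E))$-bound, observe that for a step-function integrand $\Phi$ the process $M_t := {\rm Int}(\one_{[0,t]}\Phi)$ has continuous sample paths: on each sub-interval $(t_{i-1},t_i]$ it equals an $\F_{t_{i-1}}$-measurable linear combination of continuous Brownian increments. Doob's $L^p$-maximal inequality for $E$-valued martingales (valid for all Banach spaces when $p>1$) applied to $M$ yields
\[
\Bigl\|\sup_{t\in[0,T]}\|M_t\|_E\Bigr\|_{L^p(\O)}
\lesssim_p \|M_T\|_{L^p(\O;E)}
\eqsim_{p,E}\|\Phi\|_{L^p(\O;\g(H,E))}.
\]
Applying this bound to differences of approximating step functions shows that any Cauchy sequence in $L^p_\FF(\O;\g(H,E))$ lifts to a Cauchy sequence in $L^p(\O;C([0,T];E))$, whose limit is the desired continuous version of ${\rm Int}(\one_{[0,\cdot]}\Phi)$ for a general $\Phi$, and the two-sided estimate upgrades to the $C([0,T];E)$-valued norm on the left.

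The principal difficulty is the decoupling step: extracting the two-sided comparison between the Gaussian martingale transform and its decoupled tangent sequence uses the full strength of the \umd property and is not a routine consequence of the definition; one way to obtain it is via the characterization of \umd by the boundedness of the Hilbert transform on $L^p(\R;E)$ and the transference argument of Garling, extending McConnell's scalar decoupling. Once this is available, the identification of the decoupled Gaussian $L^p$-moment with the $\g$-radonifying norm reduces, by conditioning, to the very definition of $\g(H,E)$ combined with Kahane--Khintchine, and the passage to continuous paths is standard martingale theory. This is precisely the route carried out in \cite{NVW1}.
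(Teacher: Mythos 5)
This theorem is not proved in the paper at all: it is recalled verbatim from \cite[Theorem 3.5]{NVW1}, and your sketch follows essentially the same route as the proof given there, namely Garling--McConnell decoupling for \umd spaces on adapted step functions, identification of the decoupled Gaussian moment with the $\gamma(H,E)$-norm via conditioning and Kahane--Khintchine, density, and Doob's maximal inequality for the continuous version and the $C([0,T];E)$-estimate. The argument is sound; the only cosmetic point is that after integrating the pointwise identity in $\omega$ the equality with $\|\Phi\|_{L^p(\Omega;\gamma(H,E))}$ is just the definition of the Bochner norm, so the appeal to the $\gamma$-Fubini Lemma \ref{sec:gamma,Fubini-lemma} is not needed.
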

In the above result one does not need that $\FF$ is generated by $W$. If the above norm equivalence holds for all $\Phi\in L^p_\FF(\Omega;\gamma(H,E))$, then $E$ has the \umd property (see \cite{Ga1}).

\begin{thrm}[Stochastic integral II]\label{div-extenstion-thrm}
Let $E$ be a \umd space and $1<p<\infty$. The space $L^p_\FF(\Omega;\gamma(H,E))$ is contained in the domain of $\delta$ and for all $\Phi \in L^p_\FF(\Omega;\gamma(H,E))$ one has $\delta(\Phi) = {\rm Int}(\Phi)$.
\end{thrm}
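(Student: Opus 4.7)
The strategy is the standard density-and-closedness argument. Recall that the divergence operator $\delta$ is closed, and by Theorem \ref{thm:stochintI} the Itô integral ${\rm Int}$ extends to a bounded operator $L^p_\FF(\O;\g(H,E))\to L^p(\O;E)$, which is precisely where the \umd assumption enters. Therefore, once the identity $\delta(\Phi)={\rm Int}(\Phi)$ is established on the dense class of adapted finite rank step functions, closedness of $\delta$ together with continuity of ${\rm Int}$ immediately extends it to all $\Phi\in L^p_\FF(\O;\g(H,E))$.

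All the work is thus in the identity for elementary adapted processes. By linearity it suffices to treat $\Phi=f\otimes R$, where $R=\one_{(s,t]}\otimes u\otimes x\in \g(H,E)=\g(L^2(0,T;U),E)$ and $f\in L^p(\O,\F_s)$, so that ${\rm Int}(\Phi)=(W_U(t)u-W_U(s)u)f\otimes x$ by definition. I would approximate $f$ in $L^p(\O,\F_s)$ by smooth random variables $f_n\in\mathscr{S}$ of the form $g_n(W(h_1),\ldots,W(h_{m_n}))$ with all $h_j$ supported in $[0,s]$; such random variables are dense in $L^p(\O,\F_s)$. Applying Lemma \ref{sec:div-oprtr;lemma:simple-functions} along an orthonormal basis $(\tilde h_j)$ of $H$ gives
\[\delta(f_n\otimes R)=\sum_{j\geq 1}W(\tilde h_j)f_n\otimes R\tilde h_j - R(Df_n).\]
Since $R\tilde h_j=\lb \tilde h_j,\one_{(s,t]}\otimes u\rb x$ and $W$ is linear on $H$, the first sum collapses to $W(\one_{(s,t]}\otimes u)f_n\otimes x=(W_U(t)u-W_U(s)u)f_n\otimes x$. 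The correction term $R(Df_n)$ vanishes: $Df_n\in L^2(0,T;U)$ is supported in $[0,s]$ because each $h_j$ is, whereas $R$ pairs against $\one_{(s,t]}\otimes u$ and hence annihilates anything supported in $[0,s]$. Consequently $\delta(f_n\otimes R)={\rm Int}(f_n\otimes R)$.

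Passing to the limit, $f_n\otimes R\to f\otimes R$ in $L^p(\O;\g(H,E))$ while ${\rm Int}(f_n\otimes R)\to {\rm Int}(f\otimes R)$ in $L^p(\O;E)$ by Theorem \ref{thm:stochintI}, so closedness of $\delta$ transfers the identity to $f\otimes R$ for any $\F_s$-measurable $f\in L^p(\O)$. By linearity it holds on all adapted finite rank step functions, and a final density step in $L^p(\O;\g(H,E))$, combined once more with closedness of $\delta$ and boundedness of ${\rm Int}$, completes the proof. The main obstacle — and really the only substantive point — is the commutator computation in the previous paragraph: the adapted structure must be used in precisely the right way to force $R(Df_n)=0$, so that Lemma \ref{sec:div-oprtr;lemma:simple-functions} produces exactly the Itô increment and nothing more. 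Everything surrounding that calculation is formal.
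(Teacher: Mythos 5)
Your argument is correct. Note that the paper does not prove this theorem at all: it is recalled from the literature (cited as \cite[Theorem 5.4]{janjan}), so there is no in-paper proof to compare with; your proof is essentially the standard one from that reference. The structure is right: reduce by linearity to $\Phi=f\otimes(\one_{(s,t]}\otimes u\otimes x)$ with $f$ $\F_s$-measurable, approximate $f$ by smooth cylindrical $f_n$ built from $W(h)$ with $h$ supported in $[0,s]$, apply Lemma \ref{sec:div-oprtr;lemma:simple-functions}, observe that adaptedness forces $R(Df_n)=0$ since $Df_n$ is supported in $[0,s]$ while $R$ pairs against $\one_{(s,t]}\otimes u$, and then use closedness of $\delta$ together with the boundedness of ${\rm Int}$ from Theorem \ref{thm:stochintI} (the only place \umd enters) to pass to general $\Phi\in L^p_\FF(\O;\g(H,E))$. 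The only point worth writing out explicitly is the identification ${\rm Int}(f_n\otimes R)=(W_U(t)u-W_U(s)u)f_n\otimes x$: the definition of ${\rm Int}$ gives this for simple $\F_s$-measurable $f$, so for smooth $f_n$ it requires one more routine approximation (or, equivalently, you can run your computation with $f=\one_A$, $A\in\F_s$, approximated by smooth $f_n$, which is exactly what you do). This is harmless and does not affect the validity of the proof.
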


Motivated by the above result, we will write
\[\int_0^T u(t)\; dW_U(t) = \delta(u),  \ \ \ u\in \textrm{Dom}(\delta),\]
and the latter is called the {\em Skorohod integral} of $u$.

\subsection{Stochastic integral processes\label{sec:stochintprocess}}

In this section we will assume that $H = L^2(0,T;U)$ for some Hilbert space $U$ and some $T>0$, and we will assume that $E$ is a \umd Banach space. With a slight abuse of notation, we will denote $\mathbf{1}_A: H\to H$, $A\in \mathcal{B}[0,T]$, as the bounded linear operator on $H = L^2(0,T;U)$ defined by
\[(\mathbf{1}_Ah)(t) := \mathbf{1}_A(t)h(t),\]
for almost every $t\in [0,T]$. \\
With another slight abuse of notation, we can in a similar way view $\mathbf{1}_A$ as an operator on $\gamma(H,E)$. Indeed, for $R\in \gamma(H,E)$ we define $(\mathbf{1}_AR)h := R(\mathbf{1}_Ah)$. From the ideal property yields that $\mathbf{1}_A$ is then indeed an operator on $\gamma(H,E)$.

When $u\in \textrm{Dom}(\delta)$, it does not generally hold that $\one_{(s,t]}u \in \textrm{Dom}(\delta)$. Indeed, already in the case $p=2$ and $E=\R$ a counterexample can be found in \cite{Nualart2} and \cite{Pronk}. Define
\[\LL^{p}(E) := \{u\in \textrm{Dom}_{p,E}(\delta):\ \mathbf{1}_{[0,t]}u \in \textrm{Dom}_{p,E}(\delta)\ \textrm{for all } t\in [0,T]\}.\]
For $u\in \LL^p(E)$ we define the process
\[\zeta(t) := \delta(\mathbf{1}_{[0,t]}u) :=\int_0^t u(s) \, dW_U(s),  \ \ \ t\in [0,T].\]
Note that $\DD^{1,p}(\g(H,E)) \subset \LL^p(E)$. Indeed, by Theorem \ref{Malliavin-Fubini-isomorphism}, one obtains that if $u\in \D^{1,p}(\g(H,E))$, then $\one_{[0,t]}u\in \D^{1,p}(\g(H,E))$. The inclusion then follows from Proposition \ref{delta-continuity-thrm}.

Below we will also need Banach spaces of type $2$. Let us recall the definition.
Let $p\in [1,2]$ and consider a Rademacher sequence $(r_n)$. The Banach space $E$ has {\em type $p$} if there is a constant $C_p$ such that for all finite sequences $x_1, \ldots, x_N\in E$,
\[\Big( \EE \Big\|\sum_{n=1}^N r_nx_n\Big\|^2 \Big)^{1/2} \leq C_p \Big( \sum_{n=1}^N \|x_n\|^p \Big)^{1/p}.\]
Elementary facts are that every Banach space has type $1$, every Hilbert space has type $2$. Also, if a Banach space has type $p$, then it has type $p_0$ for all $p_0\in [1,p]$. If $p\in [1, \infty)$, $E$ has type $p_0\in [1,2]$ and $(A, \mathscr{A}, \mu)$ is a measure space, then $L^{p}(A;E)$ has type $\min\{p,p_0\}$.

Recall from \cite[Theorem 11.6]{Neerven-Radon} that for type $2$ spaces $E$ one has the following embedding
\begin{equation}\label{eq:type2embedding}
L^2(0,T;\gamma(U, E))\hookrightarrow \g(H,E),
\end{equation}
where again $H = L^2(0,T;U)$. A consequence is that if $u\in L^2(0,T;\D^{1,p}(\g(U,E)))$, then $u\in \D^{1,p}(\g(H,E))$.
We will show that, under extra integrability conditions, $\zeta(t) := \int_0^t u(s) \, dW_U(s)$ has a continuous version.

\begin{thrm}\label{integral-process-continuous-version}
Let $E$ be a \umd Banach space with type $2$, let $2<p<\infty$ and suppose $u\in L^2(0,T;\DD^{1,p}(\gamma(U,E)))$. If $r\mapsto D(u(r))\in L^p([0,T];L^p(\Omega;\gamma(H,\gamma(U,E)))$, then the integral process $\zeta:[0,T]\times\O\to E$ defined by
\[\zeta(t) = \int_0^t u(s) \, dW_U(s),  \ \ \ t\in [0,T], \]
has a version with continuous paths.
\end{thrm}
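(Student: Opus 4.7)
I would split $\zeta=A+B$ into a deterministic-integrand It\^o part and a mean-zero Skorohod correction. Set $A(t):=\int_0^t \E u(r)\,dW_U(r)$. Since $\E u\in L^2(0,T;\g(U,E))$, the type $2$ embedding \eqref{eq:type2embedding} places $\E u$ in $\g(H,E)$, and as a deterministic (hence trivially adapted) element of $L^p(\O;\g(H,E))$ Theorem \ref{thm:stochintI} provides a continuous version of $A$. Using Theorem \ref{div-extenstion-thrm} and linearity of $\delta$, the remainder $B(t):=\delta(\one_{(0,t]}(u-\E u))=\zeta(t)-A(t)$ is well defined for each $t$, so it suffices to exhibit a continuous version of $B$, which I intend to obtain from Kolmogorov--Chentsov applied to $L^p(\O;E)$-norms of increments.

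\textbf{Increment estimate.} For $0\le s<t\le T$ set $v_{s,t}:=\one_{(s,t]}(u-\E u)$, so $\E v_{s,t}=0$. Combining type $2$, $\gamma$-Fubini (Lemma \ref{sec:gamma,Fubini-lemma}), the Malliavin--Fubini isomorphism (Proposition \ref{Malliavin-Fubini-isomorphism}) and the weak characterization (Theorem \ref{thm:pind and D-weak=strong-UMD}) places $v_{s,t}$ in $\DD^{1,p}(\g(H,E))$, and identifies its Malliavin derivative, via the isomorphism $I_{H,U}$ of \eqref{eq:isomorphism I_H1H2} together with \eqref{eq:type2embedding}, with the section $r\mapsto\one_{(s,t]}(r)Du(r)$. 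Proposition \ref{div-norm-estimate} then gives
\[\|B(t)-B(s)\|_{L^p(\O;E)}\lesssim_{p,E}\|\E v_{s,t}\|_{\g(H,E)}+\|Dv_{s,t}\|_{L^p(\O;\g^2(H,E))}=\|Dv_{s,t}\|_{L^p(\O;\g^2(H,E))}.\]
Bounding the right-hand side pointwise by $\bigl(\int_s^t\|Du(r)\|_{\g(H,\g(U,E))}^2\,dr\bigr)^{1/2}$ via type $2$, then taking the $L^p(\O)$-norm, applying Minkowski (valid because $p\ge 2$) and finally Jensen on $[s,t]$, yields
\[\E\|B(t)-B(s)\|_E^p\lesssim |t-s|^{p/2-1}\int_s^t g(r)\,dr,\qquad g(r):=\|Du(r)\|_{L^p(\O;\g(H,\g(U,E)))}^p,\]
and $g\in L^1(0,T)$ by hypothesis.

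\textbf{Kolmogorov--Chentsov and the main obstacle.} Writing $\alpha:=p/2-1$, which is strictly positive because $p>2$, and specializing to the dyadic partition $t_k^{(n)}:=kT2^{-n}$ of $[0,T]$, the sum over $k$ of $\int_{t_{k-1}^{(n)}}^{t_k^{(n)}}g(r)\,dr$ telescopes to $\|g\|_{L^1(0,T)}$, so Markov's inequality delivers
\[\PP\Bigl(\max_{1\le k\le 2^n}\|B(t_k^{(n)})-B(t_{k-1}^{(n)})\|_E>\lambda\Bigr)\lesssim \lambda^{-p}2^{-n\alpha}\|g\|_{L^1(0,T)}.\]
Choosing $\lambda=2^{-n\gamma}$ for any $\gamma\in(0,\alpha/p)=(0,1/2-1/p)$ makes this bound summable in $n$, so Borel--Cantelli plus the standard dyadic telescoping argument produces an a.s.\ H\"older-$\gamma$ continuous version of $B$ on the dyadic rationals, which extends by uniform continuity to all of $[0,T]$. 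The main obstacle is not the Kolmogorov step but the rigorous identification of $D(\one_{(s,t]}u)$ with the section $r\mapsto\one_{(s,t]}(r)Du(r)$ under the composite isomorphism induced by \eqref{eq:type2embedding} and $I_{H,U}$, given that the hypotheses impose asymmetric time integrability ($L^2$ in time for $u$, $L^p$ in time for $Du$); once this is verified by testing against $x^*\in\g(H,E)^*$ and invoking Theorem \ref{thm:pind and D-weak=strong-UMD}, the remainder of the argument is routine.
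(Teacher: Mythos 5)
Your decomposition ($A(t)=\int_0^t \E u\,dW_U$ handled by Theorem \ref{thm:stochintI}, remainder handled through Proposition \ref{div-norm-estimate} after centering) and your increment estimate $\E\|B(t)-B(s)\|^p\lesssim |t-s|^{p/2-1}\int_s^t g(r)\,dr$ are exactly what the paper does; your Minkowski-plus-Jensen route to this bound is equivalent to the paper's pointwise H\"older argument, and the identification of $D(\one_{(s,t]}u)$ with $r\mapsto\one_{(s,t]}(r)Du(r)$ that you flag as the main obstacle is precisely what the discussion preceding the theorem settles via Proposition \ref{Malliavin-Fubini-isomorphism} and the embedding \eqref{eq:type2embedding} (multiplication by $\one_{(s,t]}$ acts boundedly on $\g(H,\D^{1,p}(\g(U,E)))$ by the ideal property), so that concern is legitimate but routine. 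Where you genuinely diverge is the final continuity step: the paper integrates the increment bound against $|t-s|^{-\theta p-1}$ to place $\zeta$ in $L^p(\O;W^{\theta,p}(0,T;E))$ for $\theta\in(1/p,1/2)$ and invokes the Sobolev embedding into $C^{0,\lambda}$, whereas you run a dyadic Borel--Cantelli chaining argument adapted to the non-uniform modulus $\int_s^t g$. Both are valid: your Markov bound $\lambda^{-p}2^{-n\alpha}\|g\|_{L^1}$ with $\alpha=p/2-1>0$ is summable for $\lambda=2^{-n\gamma}$, $\gamma<\tfrac12-\tfrac1p$, and the standard telescoping gives a.s.\ H\"older continuity on the dyadics; just note that to see the continuous extension is a \emph{version} of $B$ at non-dyadic $t$ you should add the one-line observation that $B$ is continuous in probability, which follows from your own increment bound and the absolute continuity of $r\mapsto\int g$. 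The trade-off is that your route is more elementary (no vector-valued fractional Sobolev spaces or the embedding results of \cite{Amann} and \cite{Adams}), while the paper's route yields in addition the quantitative estimate \eqref{eq:Holderestproof} for $\|\zeta\|_{L^p(\O;C^{0,\lambda})}$, which is what the subsequent Corollary relies on; your Borel--Cantelli argument as written produces only almost sure H\"older paths, which suffices for the theorem but not for that moment bound.
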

\begin{proof}
Let $v\in \g(H,E)$ be given by $v = \E u$. By Theorem \ref{thm:stochintI} the process $Y(t) = \int_0^t v \, dW_U$ has a continuous version. Replacing $u$ by $u - \E u$, from now on we can assume $\|\EE(u)\|_{\gamma(H,E)} = 0$.
Proposition \ref{div-norm-estimate} yields
\begin{align*}
\EE\|\zeta(t) - \zeta(s)\|^p &= \E \|\delta(\mathbf{1}_{[s,t]}u)\|^p \\ & \lesssim_{p,E} \E \|D(\mathbf{1}_{[s,t]}u)\|^p_{\g(H,\g(H,E))} = \E \|\mathbf{1}_{[s,t]}(I_{H}(Du))\|^p_{\g(H,\g(H,E))}.
\end{align*}
Here, $I_{H}$ is the isomorphism given in (\ref{eq:isomorphism I_H1H2}). Since $E$ has type $2$, also $\gamma(H,E)$ has type $2$. Hence by \eqref{eq:type2embedding}
\[\|F\|_{L^p(\Omega;\gamma(H,\gamma(H,E)))} \lesssim_{E} \|F\|_{L^p(\Omega; L^2(0,T;\gamma(U, \gamma(H,E))))},\]
for all $F\in L^p(\Omega\times [0,T];\gamma(U, \gamma(H,E))))$. This yields, using H\"{o}lder's inequality,
\begin{align*}
\|\mathbf{1}_{[s,t]}(I_{H}(Du))\|^p_{\g(H,\g(H,E))} &\lesssim_{p,E} \EE\Big( \int_s^t \|I_{H,U}(D(u(r)))\|^2_{\gamma(U, \gamma(H,E))} \, dr\Big)^{\frac{p}{2}} \\
 &\leq |t-s|^{\frac{p}{2}-1} \EE\Big( \int_s^t \|I_{H,U}(D(u(r)))\|^p_{\gamma(U, \gamma(H,E))} \, dr\Big) \\
 &= |t-s|^{\frac{p}{2}-1} \int_s^t A(r) \, dr,
\end{align*}
where $A(r) := \E\|D(u(r))\|^p_{\gamma(H, \gamma(U,E))}$. By Fubini's theorem it follows that for all $\theta\in (0,1/2)$,
\begin{align*}
\EE\int_0^T &\int_0^T \frac{\|\zeta(t)-\zeta(s)\|^p_E}{|t-s|^{\theta p +1}} \; dt\, ds \lesssim_{p,E} \Big(\int_0^T \int_{s}^T  \frac1{|t-s|^{2-p(\frac{1}{2}-\theta)}}\int_s^t A(r) \, dr \, dt  \, ds \Big) \\
 &= \int_0^T \int_s^T \int_r^T \frac{A(r)}{|t-s|^{2-p(\frac{1}{2}-\theta)}}  \, dt \, dr \, ds \\
 &\lesssim_{p,\theta} \int_0^T \int_s^T \big(|r-s|^{p(\frac{1}{2}-\theta)-1} + |T-s|^{p(\frac{1}{2}-\theta)-1}\big) A(r) \, dr \, ds\\
 & \lesssim_{p,\theta} \int_0^T \big(r^{p(\frac{1}{2}-\theta)} + (T-r)^{p(\frac{1}{2}-\theta)} + T^{p(\frac{1}{2}-\theta)} \big) A(r) \,dr \\
 &\lesssim_{p,\theta} T^{p(\frac12 -\theta)}\int_0^T A(r) \,dr = \|D(u(r))\|^p_{L^p(\O\times [0,T];\gamma(H,\gamma(U,E)))} < \infty.
\end{align*}
Also observe that
\[\E \int_0^T \|\zeta(t)\|^p \, dt \lesssim_{p,E} T^{p(\frac12 -\theta)} \|D u\|^p_{L^p(\O\times [0,T];\gamma(H,\gamma(U,E)))}.\]

It follows that (see \cite[section 2]{Amann}) $\zeta\in L^p(\O;W^{\theta,p}(0,T;E))$.
If $\theta\in (1/p,1/2)$, it follows from the Sobolev embedding theorem (see \cite[Theorem $4.12$]{Adams}) that $\zeta\in L^p(\Omega;C^{0,\gamma}(0,T;E))$ for all $0<\lambda\leq \theta-\frac{1}{p}$ and
\begin{equation}\label{eq:Holderestproof}
\|\zeta\|_{L^p(\Omega;C^{0,\lambda}(0,T;E))} \lesssim_{E,p,\lambda,\theta,T} \|D u\|^p_{L^p(\O\times [0,T];\gamma(H,\gamma(U,E)))}.
\end{equation}
In particular, $\zeta$ has a continuous version.
\end{proof}

\begin{clry}
Assume the conditions of Theorem \ref{integral-process-continuous-version} hold. If additionally $\E u\in L^p(0,T;\g(U,E))$, then $\zeta$ has a version in $L^p(\O;C^{\lambda}([0,T];E))$ for all $\lambda\in (0,\frac12-\frac1p)$. Moreover, for every $\lambda\in (0,\frac12-\frac1p)$ there is a constant $C$ independent of $u$ such that
\[\|\zeta\|_{L^p(\O;C^{0,\lambda}([0,T];E))}\leq C\|\E u\|_{L^p(0,T;\g(U,E))} + C\|Du\|_{L^p(\O\times[0,T];\g(H,\g(U,E)))}.\]
\end{clry}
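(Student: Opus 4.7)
The plan is to split $u = v + \tilde u$, where $v := \E u$ is deterministic (viewed as an element of $L^p(0,T;\gamma(U,E))$) and $\tilde u := u - v$ has vanishing mean. Since $v$ is deterministic, $D\tilde u = Du$, and the hypothesis $v\in L^p(0,T;\gamma(U,E)) \subset L^2(0,T;\gamma(U,E))$ (using $p>2$ and $T<\infty$) gives $\tilde u\in L^2(0,T;\DD^{1,p}(\gamma(U,E)))$ with $\E\tilde u=0$ and the same Malliavin derivative $Du$. Consequently $\tilde u$ satisfies the hypotheses of Theorem \ref{integral-process-continuous-version}, and inspecting its proof, in particular the estimate \eqref{eq:Holderestproof}, shows that the Skorohod integral process $\tilde\zeta(t) := \int_0^t \tilde u(s)\,dW_U(s)$ admits for every $\lambda\in(0,\tfrac12-\tfrac1p)$ a version in $L^p(\O;C^{0,\lambda}([0,T];E))$ with
\[
\|\tilde\zeta\|_{L^p(\O;C^{0,\lambda}([0,T];E))}\lesssim_{E,p,\lambda,T} \|Du\|_{L^p(\O\times[0,T];\gamma(H,\gamma(U,E)))}.
\]

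It remains to handle the deterministic Wiener integral process $Y(t) := \int_0^t v(s)\,dW_U(s)$ and show that under the stronger integrability assumption $v\in L^p(0,T;\gamma(U,E))$ it too has a H\"older-regular version. Here I would repeat the Sobolev slicing argument used in the proof of Theorem \ref{integral-process-continuous-version}, but with the It\^o estimate of Theorem \ref{thm:stochintI} in place of Proposition \ref{div-norm-estimate}. Namely, for $0\le s\le t\le T$,
\[
\E\|Y(t)-Y(s)\|^p \lesssim_{p,E} \|\one_{[s,t]}v\|_{\gamma(H,E)}^p \lesssim_E \Big(\int_s^t\|v(r)\|_{\gamma(U,E)}^2\,dr\Big)^{p/2} \lesssim_p |t-s|^{p/2-1}\int_s^t\|v(r)\|_{\gamma(U,E)}^p\,dr,
\]
where the second inequality uses the type $2$ embedding \eqref{eq:type2embedding} and the third is H\"older's inequality. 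Then for any $\theta\in(0,\tfrac12)$, Fubini's theorem and the same bookkeeping as in the proof of Theorem \ref{integral-process-continuous-version} yield
\[
\E\int_0^T\int_0^T \frac{\|Y(t)-Y(s)\|^p}{|t-s|^{\theta p+1}}\,dt\,ds \lesssim_{p,E,\theta,T} \|v\|_{L^p(0,T;\gamma(U,E))}^p,
\]
and for $\theta\in(\tfrac1p,\tfrac12)$ the Sobolev embedding $W^{\theta,p}(0,T;E)\hookrightarrow C^{0,\theta-1/p}([0,T];E)$ gives a version of $Y$ in $L^p(\O;C^{0,\lambda}([0,T];E))$ for every $\lambda\in(0,\tfrac12-\tfrac1p)$, together with the bound
\[
\|Y\|_{L^p(\O;C^{0,\lambda}([0,T];E))} \lesssim_{E,p,\lambda,T} \|v\|_{L^p(0,T;\gamma(U,E))}.
\]

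Since $\zeta = Y + \tilde\zeta$ by linearity of $\delta$ on the decomposition $u = v + \tilde u$, the triangle inequality combines the two bounds into the desired estimate. I don't anticipate any substantial obstacle: the only real work is verifying that the deterministic Wiener integral inherits H\"older regularity from the $L^p$-in-time bound on $v$, which is a straightforward repetition of the Kolmogorov/Sobolev argument already carried out for $\tilde u$. The one point requiring a little care is the observation that $v\in L^p(0,T;\gamma(U,E))\hookrightarrow L^2(0,T;\gamma(U,E))\hookrightarrow \gamma(H,E)$, so that both the Itô integral in Theorem \ref{thm:stochintI} and the identification $\delta(v) = \mathrm{Int}(v)$ from Theorem \ref{div-extenstion-thrm} apply, and $\zeta = Y+\tilde\zeta$ holds as an identity in $L^p(\O;E)$.
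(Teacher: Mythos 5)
Your proposal is correct and follows essentially the same route as the paper: split off the mean $v=\E u$, invoke the estimate \eqref{eq:Holderestproof} from the proof of Theorem \ref{integral-process-continuous-version} for the mean-zero part, and treat the deterministic Wiener integral via Theorem \ref{thm:stochintI}, the type~$2$ embedding \eqref{eq:type2embedding}, H\"older in time, and the Sobolev embedding. Your only deviation is keeping the H\"older bound local in time ($|t-s|^{p/2-1}\int_s^t\|v\|^p$) rather than using the global norm as the paper does, which changes nothing in the bookkeeping.
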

\begin{proof}
By the previous proof and in particular \eqref{eq:Holderestproof} it suffices to estimate the $L^p(\O;{C^{0,\lambda}([0,T];E)})$-norm of $\eta$, where $\eta:[0,T]\times\O\to E$ is given by $\eta(t) = \int_0^t v \, dW_U$ and $v = \E u$. It follows from Theorem \ref{thm:stochintI} and \eqref{eq:type2embedding} that for $0\leq s< t\leq T$ one has
\begin{align*}
(\E\|\eta(t) - \eta(s)\|^p)^{1/p} & \eqsim_{E,p} \|\one_{[s,t]}v\|_{\g(H,E)}
\\ & \leq \|\one_{[s,t]}v\|_{L^2(0,T;\g(U,E)}\leq |t-s|^{\frac12-\frac1p} \|v\|_{L^p(0,T;\g(U,E))}.
\end{align*}
Therefore, as in the proof of Theorem \ref{integral-process-continuous-version} one has that $\eta\in L^p(\O;W^{\theta,p}(0,T;E))$ for all $0<\theta<1/2$ and for all $\lambda\leq \theta-\frac1p$ one has
\[\|\eta\|_{L^p(\O;C^{0,\lambda}([0,T];E))}\lesssim_{p,\lambda,\theta,T} \|\eta\|_{L^p(\O;W^{\theta, p}(0,T;E))} \lesssim_{p,E} \|v\|_{L^p(0,T;\g(U,E))}.\]
\end{proof}

\section{It\^{o}'s formula in the non-adapted setting\label{sec:Ito1}}

In the setting of adapted processes with values in a \umd-Banach space $E$, a version of It\^o's formula has been obtained in \cite{NVW2}. A version for Banach spaces with martingale type $2$ was already obtained in \cite{Nei}.
Below in Theorem \ref{Ito-formula} we present a version for the Skorohod integral for UMD spaces with type $2$. For Hilbert spaces $E$ the result can be found in \cite{GrPar}. Our proof follows the arguments in the scalar-valued case of It\^o's formula from \cite[Theorem 3.2.2]{Nualart2}.

Consider the $E$-valued stochastic process given by
\begin{align}\label{sec:Ito,process-formula}
\zeta_t = \zeta_0 + \int_0^t v(s) \, ds + \int_0^t u(s) \, dW_U(s).
\end{align}
where $\zeta_0$, $u$ and $v$ are non-adapted, but satisfy certain smoothness assumptions.
We prove an It\^o formula for $F(\zeta)$, where $F:E\to \R$ is twice continuously Fr\'echet differentiable with bounded derivatives.

\subsection{Preliminary results for It\^{o}'s formula}
Next, we will prove a couple of lemmas that are used in It\^{o}'s formula. Let $U$ be a Hilbert space, and set $H = L^2(0,T;U)$.
\begin{lemma}\label{Ito-helplemma2}
Let $\xi \in L^2(\Omega;H)$ and $h\in H$. For $N\in \NN$, consider the partition $(t_n^N)_{n=0}^N$ of $(0,T)$, where $t_n^N = \frac{nT}{N}$. Then
\[\EE \sum_{n=1}^N \inprod{\one_{[t_{n-1}^N, t_n^N]}h, \xi}_H^2 \to 0, \qquad N\to\infty.\]
\end{lemma}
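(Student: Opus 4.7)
The plan is to reduce everything to a deterministic statement about the scalar function $s\mapsto \|h(s)\|_U^2$ in $L^1(0,T)$ via Cauchy--Schwarz, and then to invoke absolute continuity of the integral.

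First I would unfold the inner product: since $H = L^2(0,T;U)$ and $\one_{[t_{n-1}^N,t_n^N]}$ acts by pointwise multiplication,
\[
\inprod{\one_{[t_{n-1}^N,t_n^N]}h,\xi}_H = \int_{t_{n-1}^N}^{t_n^N}\inprod{h(s),\xi(s)}_U\,ds.
\]
Applying the Cauchy--Schwarz inequality on $L^2(t_{n-1}^N,t_n^N;\R)$ to the scalar functions $\|h(\cdot)\|_U$ and $\|\xi(\cdot)\|_U$ gives
\[
\inprod{\one_{[t_{n-1}^N,t_n^N]}h,\xi}_H^{\,2}\leq \Big(\int_{t_{n-1}^N}^{t_n^N}\|h(s)\|_U^2\,ds\Big)\Big(\int_{t_{n-1}^N}^{t_n^N}\|\xi(s)\|_U^2\,ds\Big).
\]

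Next I would sum over $n$, pulling out the maximum of the $h$-integrals:
\[
\sum_{n=1}^N \inprod{\one_{[t_{n-1}^N,t_n^N]}h,\xi}_H^{\,2}\leq \Big(\max_{1\leq n\leq N}\int_{t_{n-1}^N}^{t_n^N}\|h(s)\|_U^2\,ds\Big)\,\|\xi\|_H^2,
\]
since the $\xi$-integrals telescope to $\|\xi\|_H^2$. Taking expectations yields
\[
\EE\sum_{n=1}^N\inprod{\one_{[t_{n-1}^N,t_n^N]}h,\xi}_H^{\,2}\leq \Big(\max_{1\leq n\leq N}\int_{t_{n-1}^N}^{t_n^N}\|h(s)\|_U^2\,ds\Big)\,\EE\|\xi\|_H^2,
\]
and the expectation on the right is finite by hypothesis.

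The final step, and the only one that requires any thought, is to observe that $s\mapsto \|h(s)\|_U^2$ lies in $L^1(0,T)$ since $h\in H$. Absolute continuity of the Lebesgue integral then gives, for each $\e>0$, a $\delta>0$ such that $\int_A\|h(s)\|_U^2\,ds<\e$ whenever $|A|<\delta$. Choosing $N$ large enough so that $T/N<\delta$ forces every sub-interval integral, and hence their maximum, to be below $\e$. Thus the prefactor tends to $0$ as $N\to\infty$, and the lemma follows. I do not foresee a real obstacle here; the argument is completely elementary once one recognises that the sum is controlled by the product of a bounded-in-$N$ term and a uniform modulus of absolute continuity.
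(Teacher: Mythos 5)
Your argument is correct, and it takes a genuinely different route from the paper's. The paper first verifies the claim for $h$ of the form $\one_{[t_{k-1}^K,t_k^K]}\otimes\varphi$ (where it reduces to an elementary computation), extends to linear combinations by linearity, and then passes to general $h\in H$ by density, using the uniform-in-$N$ bound
\[
\EE\sum_{n=1}^N \inprod{\one_{[t_{n-1}^N,t_n^N]}h,\xi}_H^2 \leq \EE\|\xi\|_H^2\,\|h\|_H^2
\]
to control the approximation error. You instead prove the statement for arbitrary $h$ in one stroke: the two-stage Cauchy--Schwarz (first in $U$ pointwise, then in $L^2(t_{n-1}^N,t_n^N)$) isolates the deterministic quantity $\max_n\int_{t_{n-1}^N}^{t_n^N}\|h(s)\|_U^2\,ds$, which tends to zero by absolute continuity of the integral of $\|h(\cdot)\|_U^2\in L^1(0,T)$. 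This avoids the density step entirely and yields an explicit modulus of decay in terms of the mesh $T/N$; the paper's version is marginally more robust in that the same scheme (prove for a dense class, control the error uniformly) is reused elsewhere in their Step 3 reductions. The only point worth making explicit in your write-up is the identification $L^2(\Omega;L^2(0,T;U))\cong L^2(\Omega\times(0,T);U)$ that justifies writing $\xi(s)$ and manipulating it pointwise, but this is standard and causes no difficulty.
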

\begin{proof}
If $h = \one_{[t_{k-1}^K, t_k^K]} \otimes \varphi$ with $\varphi\in U$, the result can be checked using H\"older's inequality. By linearity it extends to linear combinations of such $h$. For general $h\in H$ one has that
\begin{align}\label{eq:genhform}
\EE\sum_{n=1}^N \inprod{\one_{[t_{n-1}^N, t_n^N]}h, \xi}_{H}^2 &\leq \EE\|\xi\|_H^2 \sum_{n=1}^N \|\one_{[t_{n-1}^N, t_n^N]}h\|_H^2 = \EE\|\xi\|^2 \|h\|^2_H.
\end{align}
Therefore, the case $h\in H$ can be proved by approximation and using \eqref{eq:genhform}.
\end{proof}

\begin{lemma}\label{Ito-helplemma4}
Let $(a,b)$ be an open interval, and consider a partition $(t_n^N)_{n=1}^N$ such that $t_{n+1}^N - t_n^N \to 0$ as $N\to \infty$ for all $n$. If $u_1, u_2 \in U$, then for all $p\in [1, \infty)$, $\lim_{N\to \infty} \xi_N  = (b-a)\inprod{u_1, u_2}$ in $L^2(\O)$, where
\[\xi_N = \sum_{n=1}^N ((W_U(t_{n+1}^N) - W_U(t_n^N))u_1)((W(t_{n+1}^N) - W_U(t_n^N))u_2),  \ \ \  N\geq 1.\]
\end{lemma}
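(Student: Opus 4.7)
The plan is to recognize $\xi_N$ as a quadratic-variation-type sum for the two scalar Brownian motions $t\mapsto W_U(t)u_1$ and $t\mapsto W_U(t)u_2$, and then carry out the classical computation of the first and second moments using the fact that the increments over disjoint intervals are independent and Gaussian. Since the limit $(b-a)\langle u_1,u_2\rangle$ is deterministic, $L^2(\O)$-convergence is equivalent to the two statements $\E\xi_N\to(b-a)\langle u_1,u_2\rangle$ and $\mathrm{Var}(\xi_N)\to 0$, so I will prove each separately.

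Write $\Delta_n := t_{n+1}^N - t_n^N$ and $\eta_n^i := (W_U(t_{n+1}^N)-W_U(t_n^N))u_i$ for $i=1,2$. By definition of the $U$-cylindrical Brownian motion, each pair $(\eta_n^1,\eta_n^2)$ is centred Gaussian with
\[
\E(\eta_n^i\eta_n^j) = \Delta_n\,\langle u_i,u_j\rangle,
\]
and the pairs for different $n$ are independent. Summing gives $\E\xi_N = \sum_{n=1}^N \Delta_n\,\langle u_1,u_2\rangle = (b-a)\langle u_1,u_2\rangle$, which is the desired limiting value, so it only remains to control the variance.

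For the variance, independence across $n$ yields $\mathrm{Var}(\xi_N) = \sum_{n=1}^N \mathrm{Var}(\eta_n^1\eta_n^2)$. For each $n$ I will apply Isserlis' (Wick's) formula to the jointly Gaussian vector $(\eta_n^1,\eta_n^2)$, obtaining
\[
\E(\eta_n^1\eta_n^2)^2 = \E(\eta_n^1)^2\,\E(\eta_n^2)^2 + 2\bigl(\E \eta_n^1\eta_n^2\bigr)^2 = \Delta_n^2\bigl(\|u_1\|^2\|u_2\|^2 + 2\langle u_1,u_2\rangle^2\bigr),
\]
so that $\mathrm{Var}(\eta_n^1\eta_n^2) = \Delta_n^2\bigl(\|u_1\|^2\|u_2\|^2 + \langle u_1,u_2\rangle^2\bigr)$. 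Setting $C := \|u_1\|^2\|u_2\|^2 + \langle u_1,u_2\rangle^2$ and $\delta_N := \max_n \Delta_n$, the estimate
\[
\mathrm{Var}(\xi_N) = C\sum_{n=1}^N \Delta_n^2 \leq C\,\delta_N\sum_{n=1}^N \Delta_n = C\,\delta_N(b-a)
\]
finishes the proof, since $\delta_N\to 0$ by the mesh hypothesis.

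There is no real obstacle here; the only thing to be careful about is invoking Isserlis' formula correctly (a single concrete Wick expansion for the four-point Gaussian moment) and noting that $\max_n \Delta_n\to 0$ does follow from the hypothesis that each $\Delta_n\to 0$ together with the fact that the $\Delta_n$ sum to a fixed constant $b-a$, which forces the mesh to shrink uniformly. (If one prefers to stay literal, the bound $\sum \Delta_n^2 \leq \delta_N(b-a)$ is all that is needed.) If a stronger statement in $L^p(\O)$ were desired, one could combine this $L^2$-convergence with Gaussian hypercontractivity to upgrade automatically, but the statement as phrased only requires $L^2$.
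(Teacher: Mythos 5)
Your proof is correct and follows exactly the route the paper takes: the paper simply notes that $\E\xi_N=(b-a)\inprod{u_1,u_2}$ exactly and that $\E\xi_N^2\to(b-a)^2|\inprod{u_1,u_2}|^2$ "by a straightforward computation," and your Isserlis/Wick expansion together with the bound $\sum_n\Delta_n^2\leq\delta_N(b-a)$ is precisely that computation spelled out. (The only hypothesis you really need is that the mesh $\delta_N\to 0$, which is what the lemma's condition is intended to say and what holds for the dyadic partitions used later.)
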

\begin{proof}
Since $\E \xi_N = (b-a)\inprod{u_1, u_2}$, it suffices to shows that $\lim_{N\to \infty}\E\xi_N^2 = (b-a)^2|\inprod{u_1, u_2}|^2$. This follows from a straightforward computation.
\end{proof}

The next result will be presented and needed only for dyadic partitions, but actually holds for more general partitions.
\begin{thrm}\label{Ito-step1}
Let $U$ be a separable Hilbert space and $E$ a \umd-Banach space with type $2$. Set $t_i^n = \frac{iT}{2^n}$ for $n\geq 1$ and $i=0,1,\ldots,2^n$. For each $n\geq 1$ and $i=0, 1, \ldots, 2^n$, let $\sigma_i^n\in [t_i^n, t_{i+1}^n]$.
Let $Z, Z^1, Z^2, \ldots:[0,T] \times \Omega \to \calL(E,E^*)$ be processes and assume that
\begin{enumerate}[(i)]
\item All processes $Z, Z^1, Z^2, \ldots$ have continuous paths.
\item Pointwise on $\O$ one has $\displaystyle \limn \sup_{t\in [0,T]}\|Z(t)-Z^n(t)\|_{\calL(E,E^*)} = 0$.
\item There is a $C>0$ such that for all $t\in [0,T]$ and $\omega\in\O$, one has $\|Z^n(t,\omega)\|_{\calL(E,E^*)}\leq C$.
\end{enumerate}
Then for $u,v\in L^2(0,T;\D^{1,2}(\g(U,E)))$ one has
\begin{equation}\label{Ito-step1-formula}
\begin{aligned}
\sum_{i=0}^{2^n-1} \Big\langle &\int_{t_i^n}^{t_{i+1}^n} u(s)\;dW_U(s),  Z^n(\sigma_i^n)  \int_{t_i^n}^{t_{i+1}^n} v(s)\;dW_U(s)\Big\rangle
\\ & \to \int_0^T \langle u(s), Z(s)v(s)\rangle_{\mathrm{Tr}} \;ds \ \ \ \  \text{in $L^1(\O)$ as $n\to \infty$.}
\end{aligned}
\end{equation}
\end{thrm}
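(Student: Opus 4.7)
I would prove the theorem in four steps: a density reduction, a replacement of $Z^n$ by $Z$, an explicit decomposition of the Skorohod integrals on the partition, and identification of the limit as a Riemann sum.

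In the first step, using Proposition~\ref{div-norm-estimate} with $p=2$, the type-$2$ embedding \eqref{eq:type2embedding}, and the orthogonal decomposition of $H=L^2(0,T;U)$ along the partition $I_i^n:=(t_i^n,t_{i+1}^n]$, one establishes the uniform estimate
\[\E\sum_{i=0}^{2^n-1}\|\delta(\one_{I_i^n}u)\|^2_E \lesssim \|u\|^2_{L^2(0,T;\D^{1,2}(\gamma(U,E)))},\]
with a constant independent of $n$. Combined with $\|Z^n\|\leq C$ and the trace-duality bound $|\inprod{u(s),Z(s)v(s)}_{\mathrm{Tr}}|\leq C\|u(s)\|_{\gamma(U,E)}\|v(s)\|_{\gamma(U,E)}$, this shows that both sides of \eqref{Ito-step1-formula} depend continuously on $(u,v)\in L^2(0,T;\D^{1,2}(\gamma(U,E)))^2$ into $L^1(\O)$, uniformly in $n$. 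Hence it suffices to verify the convergence for $u,v$ in a dense subclass, which I take to be step processes whose coefficients are finite linear combinations of elementary tensors $f\otimes(u_0\otimes x)$ with $f\in\mathscr{S}$, $u_0\in U$, $x\in E$.

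In the second step, working with simple $u,v$, the Skorohod integrals $\delta(\one_{I_i^n}u)$ lie in $L^p(\O;E)$ for all $p<\infty$ with uniformly controlled moments. The difference between the $Z^n$- and $Z$-versions of the left-hand side of \eqref{Ito-step1-formula} is then bounded in $L^1(\O)$ by $(\E\sup_t\|Z^n(t)-Z(t)\|^2)^{1/2}$ times a quantity bounded in $n$, and the first factor vanishes as $n\to\infty$ by dominated convergence. From now on assume $Z^n\equiv Z$. For $n$ large enough the dyadic partition refines the steps of $u,v$; calling the constant values $F_i^n,G_i^n$, an application of Lemma~\ref{sec:div-oprtr;lemma:simple-functions} to each tensor term of $F_i^n$ and $G_i^n$ yields
\[\delta(\one_{I_i^n}\otimes(f\otimes(u_0\otimes x))) = f\,(\Delta W_i^n)(u_0)\,x-\Bigl(\int_{I_i^n}(Df(s),u_0)_U\,ds\Bigr)x,\]
with $\Delta W_i^n:=W_U(t_{i+1}^n)-W_U(t_i^n)$. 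The ``main'' term has $L^p(\O;E)$-norm of order $|I_i^n|^{1/2}$ and the ``correction'' of order $|I_i^n|$.

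Substituting the analogous decomposition for $v$ into $\inprod{\delta(\one_{I_i^n}u),Z(\sigma_i^n)\delta(\one_{I_i^n}v)}$, contributions containing at least one correction total $O(2^n|I_i^n|^{3/2})=O(2^{-n/2})$ in $L^1(\O)$ and vanish in the limit. The main--main sum reduces by bilinearity over the finitely many simple tensors comprising $F_i^n$ and $G_i^n$ to finitely many sums of the form
\[S_n := \sum_{i=0}^{2^n-1}(\Delta W_i^n)(u_0)\,(\Delta W_i^n)(v_0)\,f_i g_i\,\inprod{x,Z(\sigma_i^n)y}.\]
Writing $(\Delta W_i^n)(u_0)(\Delta W_i^n)(v_0) = |I_i^n|\inprod{u_0,v_0}_U + M_i^n$ with $M_i^n$ centered, the diagonal part is a Riemann sum for the almost-surely continuous integrand $fg\inprod{u_0,v_0}_U\inprod{x,Z(\cdot)y}$ and converges almost surely, hence in $L^1(\O)$ by bounded convergence, to the matching component of $\int_0^T\inprod{u(s),Z(s)v(s)}_{\mathrm{Tr}}\,ds$. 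The main obstacle is the remainder $\sum_i M_i^n f_i g_i\inprod{x,Z(\sigma_i^n)y}$: the non-adaptedness of the multipliers means the cross-term orthogonality exploited in Lemma~\ref{Ito-helplemma4} is not automatic. I would handle it by decomposing every $W(h)$ appearing in $f_i$, $g_i$, and $Z(\sigma_i^n)y$ into its restriction to $I_i^n$ and its complement, using the independence of the latter from $\Delta W_i^n$, and computing $\E|S_n^{\rm rem}|^2$ directly; this mirrors the orthogonality computation behind Lemma~\ref{Ito-helplemma4} and yields $L^2(\O)$-convergence of the remainder to $0$.
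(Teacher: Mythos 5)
Your overall architecture (uniform bilinear estimates to reduce to a dense class, replacement of $Z^n$ by $Z$, the explicit formula of Lemma \ref{sec:div-oprtr;lemma:simple-functions} to split each truncated Skorohod integral into a Wiener-increment term and a $Df$-correction, and disposal of all terms containing a correction) matches the paper's Steps 1--4 closely, and those parts are sound. One small quantitative slip: for $f\in\mathscr{S}$ built from general $h_j\in H$ the correction $\inprod{\one_{I_i^n}\otimes u_0, Df}_H$ is only $O(|I_i^n|^{1/2})$ by Cauchy--Schwarz, not $O(|I_i^n|)$; your claimed rate $O(2^{-n/2})$ for the mixed terms is therefore not justified as stated, but the conclusion survives because $\sum_i \inprod{\one_{I_i^n}h,\xi}_H^2\to0$ (this is exactly Lemma \ref{Ito-helplemma2}), so a Cauchy--Schwarz over $i$ still kills every product containing at least one correction.

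The genuine gap is in your treatment of the remainder $S_n^{\mathrm{rem}}=\sum_i M_i^n f g\inprod{x,Z(\sigma_i^n)y}$. You propose to ``decompose every $W(h)$ appearing in $f_i$, $g_i$, and $Z(\sigma_i^n)y$ into its restriction to $I_i^n$ and its complement'' and compute $\E|S_n^{\mathrm{rem}}|^2$. For $f$ and $g$ this is at least conceivable (and after the bilinear reduction they do not depend on $i$, so they simply factor out), but for $Z$ it is not available: the hypotheses give only that $Z$ is a bounded $\calL(E,E^*)$-valued process with continuous paths, with no representation as a functional of the isonormal process and no Malliavin regularity, so there is nothing to decompose. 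The crude alternative also fails: $\E|M_i^n|\eqsim|I_i^n|$, so $\sum_i\E|M_i^n|\eqsim T$ stays bounded away from $0$, and one genuinely needs cancellation across $i$ that an arbitrary non-adapted multiplier $\inprod{x,Z(\sigma_i^n)y}$ can destroy. The device the paper uses to close this is a two-scale argument: introduce a coarser dyadic partition $(t_j^m)$, replace $Z(\sigma_i^n)$ by $Z(t_j^m)$ for $i\in S_j^{n,m}$ at the cost of $\sup_{|r-s|\le T2^{-m}}\|Z(r)-Z(s)\|$ times an $L^1$-bounded quantity (which vanishes as $m\to\infty$ by path continuity and dominated convergence), so that within each coarse block the multiplier is a single bounded random variable constant in $i$; it then factors out of the fine-scale sum, and Lemma \ref{Ito-helplemma4} applied blockwise gives the $L^2(\O)$-convergence $\sum_{i}(\Delta_i^nW u_0)(\Delta_i^nW v_0)\to (b-a)\inprod{u_0,v_0}_U$ without any orthogonality requirement on the multipliers. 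You need this (or an equivalent freezing argument) to complete Step 4; as written, your proof of the key quadratic-variation limit does not go through.
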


\begin{proof}
For notational convenience, let $X = L^2(0,T;\DD^{1,2}(\gamma(U,E)))$ in the proof below.
Fix $n\geq 1$.
Let $\Phi_n: X\times X\to L^1(\O)$ be given by
\begin{align*}
\Phi_n(u,v) = \sum_{i=0}^{2^n-1} \Big\langle &\int_{t_i^n}^{t_{i+1}^n} u(s)\;dW_U(s), Z^n(\sigma_i^n)  \int_{t_i^n}^{t_{i+1}^n} v(s)\;dW_U(s)\Big\rangle.
\end{align*}
Observe that the stochastic integrals are well-defined in $L^2(\O;E)$ by Proposition \ref{delta-continuity-thrm} and \eqref{eq:type2embedding}.
Let $\Phi: X\times X\to L^1(\O)$ be given by
\begin{align*}
\Phi(u,v) = \int_0^T \langle u(s), Z(s)v(s)\rangle_{\mathrm{Tr}} \;ds
\end{align*}
This is well-defined by Lemma \ref{lem:trace} and the remarks below it.
For proof of \eqref{Ito-step1-formula} it suffices to show that $\limn \Phi_n(u,v) = \Phi(u,v)$ in $L^r(\O)$. For this we proceed in four steps below.

\smallskip
{\em Step 1:} Uniform boundedness of the bilinear operator $\Phi_n$.

We first show that there exists an $M\geq 0$ such that for all $u, v\in X$ and for all $n\geq 1$ one has
\begin{equation}\label{eq:Phin ineq}
\|\Phi_n(u,v)\|_{L^1(\O)} \leq M\|u\|_{X} \|v\|_{X}.
\end{equation}

One has
\begin{align*}
\|\Phi_n(u,v)\| \leq C  \sum_{i=0}^{2^n-1} \Big\| \int_{t_i^n}^{t_{i+1}^n} u(s) \, dW_U(s) \Big\| \Big\| \int_{t_i^n}^{t_{i+1}^n} v(s) \, dW_U(s) \Big\|.
\end{align*}
Therefore, with $\|\cdot\|_1 = \|\cdot\|_{L^1(\O)}$,
\begin{align*}
\|\Phi_n(u,v)\|_1 &\leq C\Big(\EE \sum_{i=0}^{2^n-1} \Big\| \int_{t_i^n}^{t_{i+1}^n} u(s) \, dW_U(s) \Big\|^2\Big)^{\frac{1}{2}} \Big(\EE \sum_{i=0}^{2^n-1}\Big\|\int_{t_i^n}^{t_{i+1}^n} v(s) \, dW_U(s) \Big\|^2\Big)^{\frac{1}{2}}
\\ & \stackrel{(i)}{\lesssim_{E}} \sum_{i=0}^{2^n-1} \| \one_{[t_i^n,t_{i+1}^n]} u\|_{\D^{1,2}(\g(H,E))} \sum_{i=0}^{2^n-1} \| \one_{[t_i^n,t_{i+1}^n]} v\|_{\D^{1,2}(\g(H,E))}
\\ &\stackrel{(ii)}{\lesssim_E} \sum_{i=0}^{2^n-1} \| \one_{[t_i^n,t_{i+1}^n]} u\|_{X} \sum_{i=0}^{2^n-1} \| \one_{[t_i^n,t_{i+1}^n]} v\|_{X}
=
\|u\|_{X} \|v\|_{X}.
\end{align*}
Here (i) follows from Proposition \ref{delta-continuity-thrm}, and (ii) follows from \eqref{eq:type2embedding} and the fact that $E$ has type $2$.

\smallskip
{\em Step 2:} Boundedness of the bilinear operator $\Phi$.

As in Step 1, there exists an $M\geq 0$ such that for all $u, v\in X$ one has
\begin{equation}\label{eq:Phi ineq}
\|\Phi(u,v)\|_{L^1(\O)} \leq M\|u\|_{X} \|v\|_{X}.
\end{equation}
By Lemma \ref{lem:trace}, one has
\begin{align*}
|\langle u(s), Z(s)v(s)\rangle_{\mathrm{Tr}}| & \leq \|u(s)\|_{\g(U,E)} \|Z(s)v(s)\|_{\g(U,E^*)}
\leq C\|u(s)\|_{\g(U,E)} \|v(s)\|_{\g(U,E^*)}.
\end{align*}
Hence
\begin{align*}
\|\Phi(u,v)\|_{L^1(\O)}  & \leq \int_0^T C\|u(s)\|_{L^2(\O;\g(U,E))} \|v(s)\|_{L^2(\O;\g(U,E^*))} \, ds\leq C \|u\|_{X} \|v\|_{X}.
\end{align*}
Now \eqref{eq:Phi ineq} follows.

\smallskip
{\em Step 3}: Reduction to simple functions of smooth processes.

Let $(e_n)_{n=1}^\infty$ denote an orthonormal basis for $U$.
Note that the following functions form a dense subset of $X$.
\begin{equation}\label{eq:densevclass}
\sum_{j=0}^{2^p-1} \mathbf{1}_{[t_j^p, t_{j+1}^p]} \otimes g_j(W(e_1), \ldots, W(e_K)) \otimes (\sum_{l=1}^L \psi_{jl} \otimes y_{jl}),
\end{equation}
where the $g_j$'s are smooth, $\psi_{jl}\in U$ and $y_{jl}\in E$ for $1\leq l\leq L$ and $1\leq j\leq 2^{p}-1$.
Now assume \eqref{Ito-step1-formula} holds for all functions $u$ and $v$ of the form \eqref{eq:densevclass}. We will show that the general case with $u,v\in X$, follows from this by a continuity argument.

Let $u, v\in X$ be arbitrary.
Fix $\varepsilon\in (0,1)$. Define $\tilde{M}=\max\{\|u\|_{X}, \|v\|_{X}\}+1$.
Choose $\tilde{u}, \tilde{v}$ of the form \eqref{eq:densevclass} and such that
\[\|u-\tilde{u}\|_{X} < \varepsilon/(M\tilde{M}),  \ \ \  \|v-\tilde{v}\|_{X} < \varepsilon/(M\tilde{M}).\]
By \eqref{eq:Phin ineq} and using the bilinearity of $\Phi_n$ and writing $\|\cdot\|_1 = \|\cdot\|_{L^1(\O)}$ one obtains
\begin{align*}
\|\Phi_n(u,v) - \Phi_n(\tilde{u}, \tilde{v})\|_{1} & \leq \|\Phi_n(u,v - \tilde{v})\|_{1} + \|\Phi_n(u-\tilde{u}, \tilde{v})\|_{1}
\\ & \leq M\|u\|_{X} \|v-\tilde{v}\|_{X} +  M\|u-\tilde{u}\|_{X} \|\tilde{v}\|_{X}
 \leq 2\varepsilon.
\end{align*}
In a similar way one sees that $\|\Phi(u,v) - \Phi(\tilde{u}, \tilde{v})\|_{L^1(\O)} \leq 2\varepsilon$.
It follows that
\begin{align*}
\|&  \Phi_n(u,v) - \Phi(u,v)\|_{1} \\ & \leq \|\Phi_n(u,v) - \Phi_n(\tilde{u},\tilde{v})\|_{1} + \|\Phi_n(\tilde{u},\tilde{v}) - \Phi(\tilde{u},\tilde{v})\|_{1} + \|\Phi(\tilde{u},\tilde{v}) - \Phi_n(u,v)\|_{1}
\\ & \leq 4 \varepsilon + \|\Phi_n(\tilde{u},\tilde{v}) - \Phi(\tilde{u},\tilde{v})\|_{1}.
\end{align*}
Therefore, taking the $\limsup$ in the above estimate and using \eqref{Ito-step1-formula} for $\tilde{u}$ and $\tilde{v}$ one obtains that
\[\limsup_{n\to \infty} \|\Phi_n(u,v) - \Phi(u,v)\|_{1} \leq 4 \varepsilon.\]
Since $\varepsilon>0$ was arbitrary, it follows that $\limn \Phi_n(u,v) = \Phi(u,v)$ in $L^1(\O)$. This yields the result.

\smallskip
\textit{Step 4}: Convergence for simple functions of smooth processes.

We next prove \eqref{Ito-step1-formula} for $u$ and $v$ of the form \eqref{eq:densevclass}. By linearity it then suffices to consider
$u$ and $v$ of the form
\begin{align*}
u &= f(W(e_1), \ldots, W(e_K)) \otimes ((\mathbf{1}_{[a, b]} \otimes \varphi) \otimes x_1), \\
v &= g(W(e_1), \ldots, W(e_K)) \otimes ((\mathbf{1}_{[c, d]} \otimes \psi) \otimes x_2), \\
\end{align*}
for some $\varphi, \psi \in U$, $x_1, x_2 \in E$ and some dyadic intervals $[a,b]$ and $[c,d]$ and smooth $f,g:\R^K\to \R$. By homogeneity we can assume that $\|\varphi\|_U = \|\psi\|_U = 1$. Moreover, we can assume $a=c$ and $b=d$. Indeed, if $(a,b)\cap (c,d)= \varnothing$, then both sides  of \eqref{Ito-step1-formula} vanish for $n$ large enough. If $(a,b)\cap (c,d)\neq \varnothing$, then we can write $u$ and $v$ as a sum of smaller dyadic intervals which are either identical or disjoint. Furthermore, for notational convenience we assume that $[a,b] = [0,T]$.

Let $m\leq n$ and for $i=0,1,\ldots, n$ let us denote by $t_i^{(m)}$ the point of the $m$-th partition that is closest to $t_i^n$ from the left.
For each $n, m$ and $j$, let $S_{j}^{n,m} = \{i: t_i^n \in [t_j^m, t_{j+1}^m)\}$.
Then
\begin{align*}
&\Big| \sum_{i=0}^{2^n-1} \Big\langle \int_{t_i^n}^{t_{i+1}^n} u\; dW_U, Z^n(\sigma_i^n)\int_{t_i^n}^{t_{i+1}^n} v\; dW_U) \Big\rangle - \int_0^t \inprod{u(s), Z(s)v(s)}_{\mathrm{Tr}}\;ds \Big| \\
&\leq \Big| \sum_{i=0}^{2^n-1} \Big\langle \int_{t_i^n}^{t_{i+1}^n} u\; dW_U, (Z^n(\sigma_i^n)-Z(t_i^{(m)}))\int_{t_i^n}^{t_{i+1}^n} v\; dW_U \Big\rangle\Big| \\
&\; + \Big| \sum_{j=0}^{2^m-1} \sum_{i\in S_{j}^{n,m}} \Big\langle \int_{t_i^n}^{t_{i+1}^n} u\; dW_U, Z(t_j^m)\int_{t_i^n}^{t_{i+1}^n} v\; dW_U\Big\rangle - \int_{t_j^m}^{t_{j+1}^m} \inprod{u(s), Z(t_j^m)v(s)}_{\mathrm{Tr}}\;ds \Big| \\
&\; + \Big|\sum_{j=0}^{2^m-1} \int_{t_j^m}^{t_{j+1}^m} \inprod{u(s), Z(t_j^m)v(s)}_{\mathrm{Tr}} \;ds - \int_0^t \inprod{u(s), Z(s)v(s)}_{\mathrm{Tr}}\;ds\Big| \\
&= a_1+a_2+a_3.
\end{align*}

For the term $a_3$, pointwise in $\O$ one can estimate
\begin{align*}
a_3 &= \Big| \sum_{j=0}^{2^m-1} \int_{t_j^m}^{t_{j+1}^m} \inprod{u(s), (Z(t_j^m) - Z(s))v(s)}_{\mathrm{Tr}}\;ds\Big| \\
&\leq \sum_{j=0}^{2^m-1} \int_{t_j^m}^{t_{j+1}^m} \|u(s)\|_{\g(U,E)}\|v(s)\|_{\g(U,E)}\|Z(t_j^m) - Z(s)\|_{\calL(E,E^*)}\;ds \\
&\leq \sup_{|s-r|\leq T2^{-m}} \|Z(r)) - Z(s)\|_{\calL(E,E^*)} \, \|u\|_{L^2(0,T;\g(U,E))} \|v\|_{L^2(0,T;\g(U,E))},
\end{align*}
The latter converges to zero in $L^1(\O)$ as $n\to\infty$ and then $m\to\infty$ by the path-continuity of $Z$.

For $a_1$ pointwise in $\O$ one can estimate
\begin{align*}
a_1 &\leq \sup_{|s-r|\leq T2^{-m}} \|Z^n(r) - Z(s)\| \sum_{i=0}^{2^n-1} \|\delta(\one_{[t_i^n, t_{i+1}^n]}u)\|\|\delta(\one_{[t_i^n, t_{i+1}^n]}v)\| \\
 &\leq \sup_{|s-r|\leq T2^{-m}} \|Z^n(r) - Z(s)\| \Big(\sum_{i=0}^{2^n-1} \|\delta(\one_{[t_i^n, t_{i+1}^n]}u)\|^2+  \sum_{i=0}^{2^n-1} \|\delta(\one_{[t_i^n, t_{i+1}^n]}v)\|^2\Big).
\end{align*}
Define the uniformly bounded sequence of random variables $(z_{nm})$ and $(z_m)$ by
\[z_{nm} = \sup_{|s-r|\leq T2^{-m}} \|Z^n(r) - Z(s)\|,  \ \ \ \  z_m = \sup_{|s-r|\leq T2^{-m}} \|Z(r) - Z(s)\|.\]
Also let the random variables $(q_n(u))_{n\geq 1}$ and $q(u)$ be given by
\[q_n(u) = \sum_{i=0}^{2^n-1} \|\delta(\one_{[t_i^n, t_{i+1}^n]}u)\|^2,  \ \ \ \ \  q(u) = \int_0^t \|u(s)\|^2 \, ds.\]
We find
\begin{align*}
a_1 & \leq  z_{nm}(q_n(u) + q_n(v))
\\ & \leq z_{m} (q(u) + q(v)) + |z_{nm} - z_m|  (q(u)+q(v)) + z_{nm}\big( |q_n(u) - q(u)| + |q_n(v) - q(v)| \big)
\end{align*}
Since the ranges of $u$ and $v$ are one-dimensional in $E$ we can apply \cite[Theorem 3.2.1]{Nualart2} to obtain $\limn q_n(u) = q(u)$ in $L^1(\O)$ and similarly for $v$. Clearly, pointwise on $\O$, $\lim_{n\to \infty} z_{mn} = z_m$. Letting $n\to \infty$, the dominated convergence theorem gives that
\[\limsup_{n\to \infty} \E a_1 \leq \E (z_{m} (q(u) + q(v))).\]
Now letting $m\to \infty$ and again applying the dominated convergence theorem, we can conclude that $\lim_{m\to \infty} \lim_{n\to \infty} a_1 = 0$ in $L^1(\O)$.

We finish the proof once we have shown that $a_2\to 0$ in $L^1(\O)$.
For the moment, fix $j$. Let us calculate the second part of the summand of $a_2$:
\begin{align*}
\int_{t_j^m}^{t_{j+1}^m} \inprod{u(s), Z(t_j^m)v(s)}_{\mathrm{Tr}} \;ds
 &= \int_{t_j^m}^{t_{j+1}^m} \sum_{k=1}^\infty \inprod{f\otimes \inprod{h_1(s),e_k}x_1, Z(t_j^m)(g\otimes \inprod{h_2(s),e_k}x_2)}_{E, E^*}\;ds \\
&= fg\inprod{x_1, Z(t_j^m)x_2} \int_{t_j^m}^{t_{j+1}^m} \inprod{h_1(s), h_2(s)}\; ds,
\end{align*}
where $h_1 := \one_{[0,t]} \otimes \varphi$, $h_2 := \one_{[0,t]}\otimes\psi$. Let us compute the first part of $a_2$. For every $n\in \NN$, consider an orthonormal basis $(\tilde{h}_i)_{i=0}^\infty$ such that
\[\tilde{h}_i = \frac{1}{\sqrt{t_{i+1}^n - t_i^n}} \one_{[t_i^n, t_{i+1}^n]}\otimes \varphi,\ i=1,2,\ldots, 2^{n+1}.\]
Then by Lemma \ref{sec:div-oprtr;lemma:simple-functions} one has
\begin{align*}
\int_{t_i^n}^{t_{i+1}^n} u\;dW_U &= \Delta^n_i W \varphi f - \langle \one_{[t_i^n, t_{i+1}^n]}\varphi, Df \rangle_H)x_1.
\end{align*}
where $\Delta_i^nW = W_U(t_{i+1}^n) - W_U(t_i^n)$. A similar identity holds for the truncated Skorohod integral of $v$.
Therefore, one obtains
\begin{align*}
&\sum_{i\in S_{j}^{n,m}}\Big\langle \int_{t_i^n}^{t_{i+1}^n} u\; dW_U, Z(t_j^m) \int_{t_i^n}^{t_{i+1}^n} v\; dW_U\Big\rangle \\
&= \sum_{i\in S_{j}^{n,m}} (\Delta_i^nW\varphi f - \langle \one_{[t_i^n, t_{i+1}^n]}\varphi, Df \rangle_H) (\Delta_i^nW\psi g - \langle \one_{[t_i^n, t_{i+1}^n]}\psi, Dg \rangle_H) \inprod{x_1, Z(t_j^m)x_2}\\
&=: \sum_{i\in S_{j}^{n,m}} (A_i-B_i)(C_i-D_i)\inprod{x_1, Z(t_j^m)x_2}
\end{align*}
Thus the convergence would follow if
\begin{align*}
 \sum_{j=0}^{2^m-1}&\inprod{x_1, Z(t_j^m)x_2} \Big[\sum_{i\in S_{j}^{n,m}}(A_i-B_i)(C_i-D_i)  - fg\int_{t_j^m}^{t_{j+1}^m} \inprod{h_1(s), h_2(s)}\; ds\Big]\to 0
\end{align*}
in $L^1(\O)$ as $n\to \infty$ and then $m\to \infty$.
Pointwise on $\Omega$, the above expression is dominated by
\[C\|x_1\|\|x_2\|\Big| \sum_{i=0}^{2^n-1} (A_i-B_i)(C_i-D_i) - fg \int_0^t \inprod{h_1(s),h_2(s)}\;ds\Big|\]
Now it suffices to prove that
\begin{equation}\label{eq:suffABCD}
\EE\Big| \sum_{i=0}^{2^n-1} (A_i-B_i)(C_i-D_i) - fg \int_0^t \inprod{h_1(s),h_2(s)}\;ds\Big| \to 0,
\end{equation}
as $n\to\infty$.
To prove \eqref{eq:suffABCD} note that with $\|\cdot\|_1 = \|\cdot\|_{L^1(\O)}$ one has
\begin{align*}
\Big| &\sum_{i=0}^{2^n-1} (A_i-B_i)(C_i-D_i) - fg \int_0^t \inprod{h_1(s),h_2(s)}\;ds\Big\|_1 \\
&\leq \Big\|\sum_{i=0}^{2^n-1} A_iC_i - fg \int_0^t \inprod{h_1(s),h_2(s)}\;ds\Big\|_1 + \sum_{i=0}^{2^n-1} \|A_iD_i\|_1 + \|B_iC_i\|_1 + \|B_iD_i\|_1.
\end{align*}
We will show this $L^1(\Omega)$-convergence by showing the convergence for each of the components separately. First,
\begin{align*}
\EE \sum_{i=0}^{2^n-1} |B_iD_i| &\leq \Big(\EE \sum_{i=1}^{2^n-1} \inprod{\one_{[t_i^n, t_{i+1}^n]}h_1, Df}^2\Big)^{\frac{1}{2}} \Big(\EE \sum_{i=0}^{2^n-1} \inprod{\one_{[t_i^n, t_{i+1}^n]}h_2, Dg}^2\Big)^{\frac{1}{2}} \to 0,
\end{align*}
by Lemma \ref{Ito-helplemma2}. By the same lemma and the properties of $W$ one sees,
\begin{align*}
\sum_{i=0}^{2^n-1} \EE |A_i D_i| &= \sum_{i=0}^{2^n-1} \EE\Big|\Delta^n_i W \varphi f\cdot \inprod{\mathbf{1}_{[t_i^n, t_{i+1}^n]}h_2, Dg}_H\Big| \\
&\leq \|f\|_{\infty} \Big( \sum_{i=0}^{2^n-1} \EE ((\Delta_i^n W)\varphi)^2\Big)^{\frac{1}{2}} \Big(\sum_{i=0}^{2^n-1} \EE \inprod{\mathbf{1}_{[t_i^n, t_{i+1}^n]}h_2, Dg}_H^2\Big)^{\frac{1}{2}} \\
&\leq \|f\|_{\infty} \sqrt{T} \Big(\sum_{i=0}^{2^n-1} \EE \inprod{\mathbf{1}_{[t_i^n, t_{i+1}^n]}h_2, Dg}_H^2\Big)^{\frac{1}{2}} \to 0,
\end{align*}
and similarly $\EE \sum_{i=1}^{2^n-1} |B_iC_i| \to 0$. By Lemma \ref{Ito-helplemma4} one has
\begin{align*}
\EE &\Big|\sum_{i=0}^{2^n-1} A_iC_i - fg \int_0^t \inprod{h_1(s), h_2(s)}\;ds\Big| \\&\leq \|f\|_\infty\|g\|_\infty \Big( \EE\Big(\sum_{i=0}^{2^n-1} (\Delta_i^nW)(\varphi)(\Delta_i^nW)(\psi) - T\inprod{\varphi, \psi}\Big)^2\Big)^{\frac{1}{2}}\to 0
\end{align*}
as $n\to \infty$. Hence \eqref{eq:suffABCD} follows.
\end{proof}

Let $E$ be a \umd space with type $2$. Consider the following assumptions:
\begin{align}\label{eq:assumpLp}
\zeta_0 &\in \DD^{1,2}(E), & & D\zeta_0 \in L^2(\Omega;L^2(0,T;\gamma(U,E))) \nonumber \\
u &\in \DD^{2,2}(L^2(0,T;\gamma(U,E))), & & Du \in L^2(0,T;\DD^{1,2}(\gamma(U, \gamma(H,E)))), \\
v &\in \DD^{1,2}(L^2(0,T;E)), & & Dv \in L^1(0,T;L^2((0,T)\times\O;\gamma(U,E))). \nonumber
\end{align}
\begin{rmrk}\
\begin{enumerate}
\item[$(1)$] Clearly, $D\zeta_0$ is in $L^2(\O;\g(H,E))$ whenever $\zeta_0\in D^{1,2}(E)$. Note that the assumption \eqref{eq:assumpLp} states that $D\zeta_0$ is actually in the smaller space $L^2(\Omega;L^2(0,T;\gamma(U,E)))$. The latter space is smaller due to the type $2$ condition. The same applies to $Du$ and $Dv$.
%\item[$(2)$] The inner $\g(H,E)$ in the assumption on $Du$ comes from the fact that $u\in \D^{1,2}(\g(H,E))$.
\item[$(2)$] If $E$ is a Hilbert space, \eqref{eq:assumpLp} is equivalent to $\zeta_0 \in \D^{1,2}(E)$, $u\in \D^{2,2}(\g(H,E))$, and $v\in \D^{1,2}(L^2(0,T;E))$.
\end{enumerate}
\end{rmrk}
Let $\zeta:[0,T]\times\O\to E$ defined by
\begin{align}\label{eq:defzeta}
\zeta_t = \zeta_0 + \int_0^t v(r) dr + \int_0^t u(r) \, dW_U(r).
\end{align}
Observe for each $t\in [0,T]$, $\zeta(t)\in L^2(\O;E)$ is well-defined, by Proposition \ref{delta-continuity-thrm}. Moreover,
\begin{align*}
\sup_{t\in [0,T]} \|\zeta(t)\|_{L^2(\O;E)} & \lesssim_{E} \|\zeta_0\|_{L^2(\O;E)} + \|v\|_{L^1(0,T;L^2(\O;E))} + \|u\|_{\D^{1,2}(\g(H,E))}
\\ & \leq \|\zeta_0\|_{L^2(\O;E)} + \|v\|_{L^1(0,T;L^2(\O;E))} + \|u\|_{\D^{1,2}(L^2(0,T;\g(U,E)))},
\end{align*}
where we used \eqref{eq:type2embedding} in the last step.
In the next lemma we discuss differentiability properties of $\zeta$.
\begin{lemma}\label{lem:estDzeta}
Let $E$ be a \umd Banach space with type $2$. Assume that \eqref{eq:assumpLp} holds and let $\zeta$ be as in \eqref{eq:defzeta}. Set $Y = L^2(\O;L^2(0,T;\g(U,E)))$. Then for each $t\in [0,T]$, $\zeta(t)\in \D^{1,2}(E)$, $D(\zeta(t))\in Y$ and
\[(D\zeta(t))(s) = (D \zeta_0)(s) + \int_0^t (D v(r))(s) dr  + \one_{[0,t]}(s)u (s) + \int_0^t D(u(r))(s) \, dW_U(r),\]
\begin{align*}
\sup_{t\in [0,T]}\|&D\zeta(t)\|_{Y}  \lesssim_{p,E} \|D\zeta_0\|_{Y} + \|D v\|_{L^1(0,T;Y)} + \|u\|_Y \\ + & \|D u\|_{L^2(0,T;\D^{1,2}(\g(U,\g(H,E))))},
\end{align*}
\end{lemma}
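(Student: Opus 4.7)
The plan is to differentiate the three summands of
$\zeta(t) = \zeta_0 + \int_0^t v(r)\,dr + \delta(\one_{[0,t]}u)$
separately and then combine them by linearity and closedness of $D$. The initial contribution $D\zeta_0$ is given directly by assumption \eqref{eq:assumpLp}.

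For the drift, I would observe that $I_t: L^2(0,T;E)\to E$ defined by $I_t(w) := \int_0^t w(r)\,dr$ is a bounded linear operator. Since bounded linear operators commute with $D$ (approximate $v$ by smooth random variables and use closedness), we obtain $I_t(v)\in \D^{1,2}(E)$ with $D(I_t(v)) = I_t\circ Dv$, where $(I_t\circ Dv)h := I_t((Dv)h)$. Identifying $\g(H,L^2(0,T;E)) \simeq L^2(0,T;\g(H,E))$ via the $\g$-Fubini lemma (Lemma \ref{sec:gamma,Fubini-lemma}) and then using the type $2$ embedding \eqref{eq:type2embedding} to view $\g(H,E)$ as containing $L^2(0,T;\g(U,E))$, this becomes
\[
D\Bigl(\int_0^t v(r)\,dr\Bigr)(s) \;=\; \int_0^t (Dv(r))(s)\,dr.
\]

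The heart of the argument is the Skorohod term. The type $2$ embedding \eqref{eq:type2embedding} places $u\in \D^{2,2}(L^2(0,T;\g(U,E)))$ inside $\D^{2,2}(\g(H,E))$, and the ideal property (Proposition \ref{gamma-ideal-property}) shows that $R\mapsto \one_{[0,t]}R$ is a bounded linear operator on $\g(H,E)$. Since this operator commutes with $D$, one gets $\one_{[0,t]}u\in \D^{2,2}(\g(H,E))$ with $D(\one_{[0,t]}u) = \one_{[0,t]}\circ Du$. Applying the commutation relation (Proposition \ref{commutation-relation}) then yields
\[
D\bigl(\delta(\one_{[0,t]}u)\bigr) \;=\; \one_{[0,t]}u \;+\; \delta\bigl(I_H(\one_{[0,t]}\circ Du)\bigr).
\]
Under the swap $I_H$ from \eqref{eq:isomorphism I_H1H2}, the indicator $\one_{[0,t]}$ that acted on the Malliavin copy of $H$ is transported to the Skorohod copy, so $\delta(I_H(\one_{[0,t]}\circ Du))$ identifies, via the $\g$-Fubini lemma and \eqref{eq:type2embedding}, with the Skorohod integral $\int_0^t Du(r)\,dW_U(r)$. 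Meanwhile the first term $\one_{[0,t]}u$ contributes $\one_{[0,t]}(s)u(s)$ once it is read off inside $L^2(0,T;\g(U,E))$. Summing the three contributions delivers the claimed formula for $(D\zeta(t))(s)$.

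The norm estimate then reduces to a term-by-term triangle inequality in $Y = L^2(\O;L^2(0,T;\g(U,E)))$: Minkowski's inequality controls the drift contribution by $\|Dv\|_{L^1(0,T;Y)}$, the pointwise piece by $\|u\|_Y$, and for the Skorohod term I would apply Corollary \ref{sec:div-oprtr;clry:delta-cts-hghr-ordr} together with \eqref{eq:type2embedding} to bound it by a constant times $\|Du\|_{L^2(0,T;\D^{1,2}(\g(U,\g(H,E))))}$. The principal obstacle is bookkeeping rather than analysis: one must track carefully which copy of $H$ each $\one_{[0,t]}$ acts on after successively passing through $\g$-Fubini, the type $2$ embedding, and the swap $I_H$, so that the four contributions assemble correctly into the stated formula.
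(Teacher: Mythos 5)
Your argument is correct and is essentially the paper's own (much terser) proof written out in full: the identity comes from Proposition \ref{commutation-relation} applied to $\one_{[0,t]}u$ together with commuting $D$ past the bounded operators $w\mapsto\int_0^t w(r)\,dr$ and $R\mapsto \one_{[0,t]}R$, and the estimate from Proposition \ref{delta-continuity-thrm} plus the type $2$ embedding. One cosmetic slip in the bookkeeping: in $\one_{[0,t]}\circ Du$ the indicator acts on the inner ($u$-time) copy of $H$, not the Malliavin copy, and $I_H$ moves it to the outer copy that $\delta$ integrates --- which is exactly what your displayed formula uses, so the conclusion is unaffected.
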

\begin{proof}
The fact that $\zeta(t) \in \D^{1,2}(\O;E)$ and the first identity follow from Proposition \ref{commutation-relation}
The estimate follows from Proposition \ref{delta-continuity-thrm}.
\end{proof}

Define $D^-\zeta$ as the element in $Y=L^2(\Omega;L^2(0,T;\gamma(U,E)))$ given by
\begin{align}\label{sec:Ito,D^-formula}
(D^-\zeta)(s) = (D\zeta_0)(s) + \int_0^s (Dv(r))(s) dr + \delta(\mathbf{1}_{[0,s]}I_{U,H}((Du)(s))).
\end{align}
In the scalar case a more general definition of $D^-$ is given in \cite[p. 173]{Nualart2}. For processes of the form (\ref{eq:defzeta}), these definitions coincide (see \cite[Proposition 3.1.1]{Nualart2}). Observe that the last term in (\ref{sec:Ito,D^-formula}) can be written as $\int_0^s D(u(r))(s) \;dW_U(r)$. By our assumptions, this term is well-defined for almost all $s\in [0,T]$, and by continuity of $\delta$, we obtain
\begin{equation}\label{eq:stochintspeciaal}
\|s\mapsto \delta(\one_{[0,s]}I_{U,H}((Du)(s)))\|_Y\leq C\|Du\|_{L^2(0,T;\D^{1,2}(\g(H,\g(U,E)) ))}.
\end{equation}
Therefore, as in Lemma \ref{lem:estDzeta} one has
\begin{equation}\label{eq:estDminzeta}
\|D^{-}\zeta\|_{Y}  \lesssim_{p,E} \|D\zeta_0\|_{Y} + \|D v\|_{L^1(0,T;Y)} + \|D u\|_{L^2(0,T;\D^{1,2}(\g(H,\g(U,E))))}.
\end{equation}

\begin{lemma}\label{Ito-step4-formula}
Let $E$ be a \umd Banach space with type $2$.
Assume that \eqref{eq:assumpLp} holds and let $\zeta$ be as in \eqref{eq:defzeta}.
Suppose that $Z:\Omega\times[0,T] \to \calL(E,E^*)$ is bounded, and has continuous paths. Let $w \in L^2(0,T;L^2(\Omega;\gamma(U,E)))$ and $D^-\zeta$ as in (\ref{sec:Ito,D^-formula}). If we fix $t\in [0,T]$ and set $t_i^n := \frac{it}{2^n}$ for $i=0,1,\ldots, 2^n$, then
\[\sum_{i=0}^{2^n-1} \int_{t_i^n}^{t_{i+1}^n} \inprod{w(s), Z(t_i^n)D(\zeta(t_i^n))(s)}_{\mathrm{Tr}} \,ds \to \int_0^t \inprod{w(s), Z(s)((D^-\zeta)(s))}_{\mathrm{Tr}} \, ds,\]
in $L^1(\O)$, as $n\to\infty$.
\end{lemma}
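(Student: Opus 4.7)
The plan is to compute $D\zeta(\kappa_n(s))(s)-(D^-\zeta)(s)$ explicitly, where $\kappa_n(s)=t_i^n$ for $s\in(t_i^n,t_{i+1}^n]$, and then split the error into an easy part from the time-variation of $Z$ plus a vanishing remainder. Using Lemma \ref{lem:estDzeta}, definition \eqref{sec:Ito,D^-formula}, and the fact that $\one_{[0,\kappa_n(s)]}(s)=0$, a direct computation gives
\[(D^-\zeta)(s) - (D\zeta(\kappa_n(s)))(s) = V_n(s) + S_n(s),\]
with $V_n(s) := \int_{\kappa_n(s)}^s (Dv(r))(s)\,dr$ and $S_n(s) := \delta\bigl(\one_{(\kappa_n(s),s]}I_{U,H}((Du)(s))\bigr)$. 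Rewriting the LHS of the lemma as $\int_0^t\inprod{w(s),\,Z(\kappa_n(s))(D\zeta(\kappa_n(s)))(s)}_{\mathrm{Tr}}\,ds$, the difference with the RHS then splits as $A_n+B_n$, where
\[A_n = \int_0^t \inprod{w(s),\,[Z(\kappa_n(s))-Z(s)](D^-\zeta)(s)}_{\mathrm{Tr}}\,ds, \quad B_n = -\int_0^t \inprod{w(s),\,Z(\kappa_n(s))[V_n(s)+S_n(s)]}_{\mathrm{Tr}}\,ds.\]

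For $A_n$ I would invoke the dominated convergence theorem: trace duality combined with the ideal property and the uniform bound on $\|Z\|_{\calL(E,E^*)}$ pointwise dominates the integrand by $2C\|w(s)\|_{\g(U,E)}\|(D^-\zeta)(s)\|_{\g(U,E)}$, which is integrable by Cauchy--Schwarz using $w\in Y$ and the estimate \eqref{eq:estDminzeta} for $\|D^-\zeta\|_Y$; meanwhile path-continuity of $Z$ gives $Z(\kappa_n(s))\to Z(s)$ pointwise in $\calL(E,E^*)$. For $B_n$, trace duality and Cauchy--Schwarz reduce the task to proving $\|V_n\|_Y\to 0$ and $\|S_n\|_Y\to 0$ in $Y=L^2(\Omega\times(0,T);\g(U,E))$.

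The bound $\|V_n\|_Y\to 0$ follows from Minkowski's integral inequality after rewriting $V_n(s) = \int_0^T\one_{[r,\rho_n(r)]}(s)(Dv(r))(s)\,dr$, where $\rho_n(r)$ is the right endpoint of the dyadic interval containing $r$, which yields
\[\|V_n\|_Y \leq \int_0^T \Big(\E\int_r^{\rho_n(r)}\|(Dv(r))(s)\|^2\,ds\Big)^{1/2}dr;\]
the inner $L^2$-norm vanishes as $\rho_n(r)\downarrow r$ and is dominated in $L^1(0,T)$ by the hypothesis $Dv\in L^1(0,T;L^2((0,T)\times\Omega;\g(U,E)))$, so DCT applies twice. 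For $\|S_n\|_Y\to 0$ I would apply Proposition \ref{div-norm-estimate} pointwise in $s$ to dominate $\|S_n(s)\|_{L^2(\Omega;\g(U,E))}$ by the sum of $\|\E\one_{(\kappa_n(s),s]}I_{U,H}((Du)(s))\|_{\g(H,\g(U,E))}$ and $\|\one_{(\kappa_n(s),s]}D[I_{U,H}((Du)(s))]\|_{L^2(\Omega;\g^2(H,\g(U,E)))}$; then invoke the type-$2$ embedding $L^2(0,T;\g(U,\g(U,E)))\hookrightarrow\g(H,\g(U,E))$ (available since $\g(U,E)$ inherits type $2$ from $E$) together with the ideal property of $\one_A$ to bound these quantities by $L^2$-integrals over the shrinking interval $(\kappa_n(s),s]$. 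A Fubini swap and a final DCT then complete the argument.

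The principal obstacle is this last step: making the type-$2$ representation available for $I_{U,H}((Du)(s))$ and its Malliavin derivative, uniformly in $s$, in a form that forces the $L^2$-mass over $(\kappa_n(s),s]$ to shrink as $n\to\infty$. A natural tactic, in the spirit of Step 3 of the proof of Theorem \ref{Ito-step1}, is to first establish the claim for smooth simple $u$ of tensor-product form, where the indicator acts transparently on elementary components, and then to extend to general $u$ satisfying \eqref{eq:assumpLp} by density, using the uniform estimate \eqref{eq:stochintspeciaal} (with the indicator incorporated) to pass to the limit.
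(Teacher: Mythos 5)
Your decomposition is exactly the one the paper uses: the same splitting into the $Z$-increment term ($A_n$, the paper's $T_{2,n}$) and the remainder term ($B_n$, the paper's $T_{1,n}$), the same explicit computation of $(D^-\zeta)(s)-(D\zeta(\kappa_n(s)))(s)=V_n(s)+S_n(s)$ from Lemma \ref{lem:estDzeta} and \eqref{sec:Ito,D^-formula}, and the same treatments of $A_n$ (dominated convergence via path continuity of $Z$) and of $V_n$ (Minkowski plus dominated convergence using $Dv\in L^1(0,T;L^2((0,T)\times\O;\g(U,E)))$). The only place you diverge is the term $S_n$, and there your primary tactic does not work as stated: the type $2$ embedding $L^2(0,T;\g(U,F))\hookrightarrow\g(L^2(0,T;U),F)$ is not surjective, and the hypothesis \eqref{eq:assumpLp} places $(Du)(s)$ in $\D^{1,2}(\g(U,\g(H,E)))$, not in an $L^2(0,T;\cdot)$-space in the remaining time variable. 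So there is no ``$L^2$-mass over $(\kappa_n(s),s]$'' available to shrink, and the domination you propose cannot be set up from the assumptions.

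The fix is more elementary and is what the paper does: apply Proposition \ref{delta-continuity-thrm} (rather than Proposition \ref{div-norm-estimate}) pointwise in $s$ to get
\[
\E\big\|\delta\big(\one_{(\kappa_n(s),s]}I_{U,H}((Du)(s))\big)\big\|^2 \lesssim_{E} \big\|\one_{B(s,T2^{-n})}(Du)(s)\big\|^2_{\D^{1,2}(\g(H,\g(U,E)))},
\]
and then use that for a \emph{fixed} element $R$ of $\g(H,F)$ with $H=L^2(0,T;U)$ one has $\|\one_A R\|_{\g(H,F)}\to 0$ as $|A|\to 0$ (approximate $R$ by finite rank operators and use the ideal property); applied to $(Du)(s)$ and to its Malliavin derivative this gives $\|\one_{B(s,T2^{-n})}(Du)(s)\|_{\D^{1,2}}\to 0$ for a.e.\ $s$, with the dominating function $\|(Du)(s)\|_{\D^{1,2}}\in L^2(0,T)$ supplied by \eqref{eq:assumpLp}, so dominated convergence in $s$ finishes the argument. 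No type $2$ and no density/approximation by simple $u$ is needed for this step (your fallback via density and \eqref{eq:stochintspeciaol} would also close the gap, but it requires approximating $u$ so that $Du_k\to Du$ in $L^2(0,T;\D^{1,2})$, which is an extra verification the direct route avoids). Up to this one repair, your proof coincides with the paper's.

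Note: one typo above — the reference is \eqref{eq:stochintspeciaal}.
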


\begin{proof}
Let
\[G(\zeta_0, v, u) = \|D\zeta_0\|_{Y} + \|D v\|_{L^1(0,T;Y)} + \|D u\|_{L^2(0,T;\D^{1,2}(\g(H,\g(U,E))))}\]
Let $\|Z\|_{\infty} = \sup_{s\in [0,T], \omega\in\O} \|Z(s,\omega)\|$, $\eta = D \zeta$ and $\xi = D^{-}\zeta$.
By Lemma \ref{lem:estDzeta}, $\inprod{w, Z(t_i^n)(\eta(t_i^n))(s)}_{\mathrm{Tr}}$ is well-defined a.e. in $(0,T)\times\O$. Moreover, one has
\begin{align*}
\E \sum_{i=0}^{2^n-1}  \int_{t_i^n}^{t_{i+1}^n} |\inprod{w, Z(t_i^n)(\eta(t_i^n))(s)}_{\mathrm{Tr}}| \, ds
& \leq \sum_{i=0}^{2^n-1}  \int_{t_i^n}^{t_{i+1}^n} \E  \|w(s)\|_{\g(U,E)} \|Z(t_i^n)(\eta(t_i^n))(s)\|_{\g(U,E^*)}\, ds
\\ & \leq C \|Z\|_\infty \|w\|_{Y} (G(\zeta_0, v, u) + \|u\|_Y).
\end{align*}
Similarly, by \eqref{eq:estDminzeta}, $\inprod{w(s), Z(s)((D^-\zeta)(s))}_{\mathrm{Tr}}$ is well-defined a.e.\ and
\begin{align*}
\E\int_0^t & |\inprod{w(s), Z(s)((D^-\zeta)(s))}_{\mathrm{Tr}}| \, ds \lesssim_{p,E} \|Z\|_\infty \|w\|_{Y} \, G(\zeta_0, v, u).
\end{align*}

Now observe that
\begin{align*}
\E\Big|&\sum_{i=0}^{2^n-1} \int_{t_i^n}^{t_{i+1}^n} \inprod{w(s), Z(t_i^n)(D(\zeta(t_i^n))(s))}_{\mathrm{Tr}}\, ds - \int_0^t \inprod{w(s), Z(s)((D^-\zeta)(s))}_{\mathrm{Tr}} \, ds\Big| \\
& \leq
\E\Big|\sum_{i=0}^{2^n-1} \int_{t_i^n}^{t_{i+1}^n} \inprod{w(s), Z(t_i^n)((D(\zeta(t_i^n)))(s) -(D^{-}\zeta)(s))}_{\mathrm{Tr}}\, ds \Big| \\
& \qquad+
\E\Big|\sum_{i=0}^{2^n-1} \int_{t_i^n}^{t_{i+1}^n} \inprod{w(s), (Z(t_i^n) -Z(s))(D^{-}\zeta)(s)}_{\mathrm{Tr}}\, ds \Big|
= T_{1, n} + T_{2,n}
\end{align*}
Let $\theta_n = \sup_{|r-s|<T2^{-n}} \|Z(r) -Z(s)\|_{\calL(E, E^*)}$. Note that pointwise in $\O$, $\limn \theta_n =0$ and $\theta_n\leq 2\|Z\|_{\infty}$. For each $n\geq 1$ one has
\begin{align*}
T_{2,n} &\leq \E\Big( \theta_n \int_{0}^{t} \|w(s)\|_{\g(U,E)} \, \|(D^{-}\zeta)(s)\|_{\g(U,E)} \, ds \Big)
\\ & \leq \E ( \theta_n  \|w\|_{L^2(0,T;\g(U,E))} \|D^{-}\zeta\|_{L^2(0,T;\g(U,E))} ).
\end{align*}
Since $\|w\|_{L^2(0,T;\g(H,U))} \|D^{-}\zeta\|_{L^2(0,T;\g(H,U))}\in L^1(\O)$, it follows from the dominated convergence theorem and the properties of $(\theta_n)_{n\geq 1}$, that $\limn T_{2,n} = 0$.

It follows from Lemma \ref{lem:estDzeta} that
\begin{align*}
T_{1,n} & \leq \|Z\|_\infty \E \sum_{i=0}^{2^{n}-1} \int_{t_i^n}^{t_{i+1}^n} \|w(s)\|_{\g(U,E)} \| D(\zeta(t_i^n))(s) -(D^{-}\zeta)(s)\|_{\g(U,E)} \, ds
\\ & \leq \|Z\|_\infty  \|w\|_{Y} \Big(\sum_{i=0}^{2^{n}-1} \int_{t_i^n}^{t_{i+1}^n} \E\| D(\zeta(t_i^n))(s) -(D^{-}\zeta)(s)\|_{\g(U,E)}^2 \, ds\Big)^{1/2}
\\ & {\leq} \|Z\|_\infty  \|w\|_{Y}  \Big(\sum_{i=0}^{2^{n}-1} \int_{t_i^n}^{t_{i+1}^n}  \E\Big|\int_{t_i^n}^s \|(Dv(r))(s)\|_{{\g(U,E)}} dr\Big|^2\, ds\Big)^{1/2} \\ & \qquad + \|Z\|_\infty  \|w\|_{Y}  \Big(\sum_{i=0}^{2^{n}-1} \int_{t_i^n}^{t_{i+1}^n}  \E\|\delta(\mathbf{1}_{[t_{i}^n,s]}(Du)(s))\|_{\g(U,E)}^2 \big)^{1/2} = S_{1,n} + S_{2,n}.
\end{align*}
For $S_{1,n}$ one has
\begin{align*}
S_{1,n} \leq \|Z\|_\infty  \|w\|_{Y}  \int_0^t \Big(\Big|\int_0^t  \one_{|r-s|<T2^{-n}} \E \|(Dv(r))(s)\|_{{\g(U,E)}}^2 \, ds\Big)^{1/2} \, dr.
\end{align*}
The latter converges to zero by the dominated convergence theorem and the assumption on $v$.
For $S_{2,n}$ one has
\begin{align*}
S_{2,n} & \leq \|Z\|_\infty  \|w\|_{Y}  \Big(\sum_{i=0}^{2^n-1} \int_{t_{i}^n}^{t_{i+1}^n} \|\one_{B(s,T2^{-n})} (Du)(s)\|_{\D^{1,2}(\g(H,\g(U,E)))}^2 \, ds\Big)^{1/2}
\\ & = \|Z\|_\infty  \|w\|_{Y}  \Big(\int_{0}^{t} \|\one_{B(s,T 2^{-n})} (Du)(s)\|_{\D^{1,2}(\g(H,\g(U,E)))}^2 \, ds\Big)^{1/2}.
\end{align*}
The latter converges to zero by the dominated convergence theorem and the assumption on $u$.
\end{proof}

\subsection{Formulation and proof of It\^{o}'s formula}

\begin{thrm}[It\^{o}'s formula]\label{Ito-formula}
Let $E$ be a \umd Banach space with type $2$. Suppose that the conditions \eqref{eq:assumpLp} hold and let $\zeta:[0,T]\times\O\to E$ be as in \eqref{eq:defzeta}. Assume $\zeta$ has continuous paths.
Let $F:E\to \RR$ be a twice continuously Fr\'{e}chet differentiable function. Suppose that $F'$ and $F''$ are bounded. Then
\begin{align*}
F(\zeta_t) &= F(\zeta_0) + \int_0^t F'(\zeta_s)(v(s)) \, ds + \delta(\inprod{F'(\zeta),\mathbf{1}_{[0,t]}u})\\&\ +\frac{1}{2}\int_0^t \inprod{u(s), F''(\zeta_s)(u(s))}_{\mathrm{Tr}} \, ds + \int_0^t \inprod{u(s), F''(\zeta_s)((D^-\zeta)(s))}_{\mathrm{Tr}} \, ds.
\end{align*}
\end{thrm}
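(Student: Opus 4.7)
The strategy mirrors the scalar-valued argument of \cite[Theorem 3.2.2]{Nualart2}: fix $t\in[0,T]$, work with the dyadic partition $t_i^n=it/2^n$, telescope $F(\zeta_t)-F(\zeta_0)=\sum_{i=0}^{2^n-1}[F(\zeta_{t_{i+1}^n})-F(\zeta_{t_i^n})]$, and Taylor expand each increment as
\begin{equation*}
F(\zeta_{t_{i+1}^n})-F(\zeta_{t_i^n}) = F'(\zeta_{t_i^n})(\Delta_i\zeta) + \tfrac12 F''(\bar\zeta_i)(\Delta_i\zeta,\Delta_i\zeta), \qquad \Delta_i\zeta:=\zeta_{t_{i+1}^n}-\zeta_{t_i^n},
\end{equation*}
where $\bar\zeta_i$ lies on the segment between $\zeta_{t_i^n}$ and $\zeta_{t_{i+1}^n}$. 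Substituting the decomposition $\Delta_i\zeta=\int_{t_i^n}^{t_{i+1}^n} v\,ds + \delta(\mathbf{1}_{[t_i^n,t_{i+1}^n]}u)$ from \eqref{eq:defzeta} splits the sum into three blocks, which I will identify in $L^1(\O)$ as $n\to\infty$ with the four target terms of the formula.

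\textbf{First-order contributions.} The drift piece $\sum_i F'(\zeta_{t_i^n})\int_{t_i^n}^{t_{i+1}^n} v\,ds$ converges to $\int_0^t F'(\zeta_s)v(s)\,ds$ by pathwise continuity of $\zeta$, boundedness of $F'$ and dominated convergence. For the Skorohod piece I would apply the integration-by-parts formula (Lemma~\ref{delta-intbyparts}) with $\mathbf{1}_{[t_i^n,t_{i+1}^n]}u\in\mathrm{Dom}(\delta)$ and with $F'(\zeta_{t_i^n})\in \D^{1,2}(E^*)$; this membership plus $DF'(\zeta_{t_i^n})=F''(\zeta_{t_i^n})D\zeta_{t_i^n}$ follow from Proposition~\ref{ChainRule-2} applied in the \umd space $E^*$, together with $\zeta_{t_i^n}\in\D^{1,2}(E)$ (Lemma~\ref{lem:estDzeta}). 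The result is
\begin{equation*}
\langle F'(\zeta_{t_i^n}),\delta(\mathbf{1}_{[t_i^n,t_{i+1}^n]}u)\rangle = \delta\bigl(\mathbf{1}_{[t_i^n,t_{i+1}^n]}\langle F'(\zeta_{t_i^n}),u\rangle\bigr) + \langle \mathbf{1}_{[t_i^n,t_{i+1}^n]}u, F''(\zeta_{t_i^n})D\zeta_{t_i^n}\rangle_{\mathrm{Tr}}.
\end{equation*}
Summing the first summand produces $\delta$ of a simple $H$-valued Riemann approximant of $\mathbf{1}_{[0,t]}\langle F'(\zeta),u\rangle$; using the product and chain rules (Lemma~\ref{lem:productrule} and Proposition~\ref{ChainRule-2}) to compute $D$-derivatives and Lemma~\ref{lem:estDzeta} to bound $D\zeta$ uniformly, one shows $\D^{1,2}$-convergence of these approximants and then invokes Proposition~\ref{delta-continuity-thrm} to pass to $\delta(\langle F'(\zeta),\mathbf{1}_{[0,t]}u\rangle)$. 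Summing the second summand is precisely the situation of Lemma~\ref{Ito-step4-formula} with $w=u$ and $Z=F''(\zeta)$ (bounded and path-continuous by the hypotheses on $F''$ and $\zeta$), giving the term $\int_0^t \langle u(s),F''(\zeta_s)(D^-\zeta)(s)\rangle_{\mathrm{Tr}}\,ds$.

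\textbf{Second-order contribution.} The drift-drift piece is $O(2^{-n})$ in $L^1(\O)$ by $\|F''\|_\infty$ and Cauchy--Schwarz, and the drift-stochastic cross piece vanishes in $L^1(\O)$ via Cauchy--Schwarz together with the uniform bound $\E\sum_i\|\delta(\mathbf{1}_{[t_i^n,t_{i+1}^n]}u)\|^2\lesssim \|u\|_{\D^{1,2}(\g(H,E))}^2$ obtained from Proposition~\ref{delta-continuity-thrm} and the type-$2$ embedding \eqref{eq:type2embedding}. The remaining stochastic-stochastic piece
\begin{equation*}
\tfrac12\sum_i F''(\bar\zeta_i)\bigl(\delta(\mathbf{1}_{[t_i^n,t_{i+1}^n]}u),\delta(\mathbf{1}_{[t_i^n,t_{i+1}^n]}u)\bigr)
\end{equation*}
is handled by first replacing $F''(\bar\zeta_i)$ with $F''(\zeta_{t_i^n})$; the replacement error is controlled by the modulus of continuity of $F''\circ\zeta$ (uniform on the compact path $\zeta([0,T])(\omega)$) times the $L^1$-bounded quantity $\sum_i\|\delta(\mathbf{1}_{[t_i^n,t_{i+1}^n]}u)\|^2$, and hence vanishes in $L^1(\O)$. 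The corrected sum is then exactly the object in Theorem~\ref{Ito-step1}, applied with both slots equal to $u$, with $Z^n(s):=F''(\zeta_{\phi_n(s)})$ (here $\phi_n$ is the partition floor) and $Z(s):=F''(\zeta_s)$, $\sigma_i^n:=t_i^n$. Hypotheses (i)-(iii) of that theorem hold by boundedness and continuity of $F''$ and pathwise continuity of $\zeta$, producing in the limit $\tfrac12\int_0^t\langle u(s),F''(\zeta_s)u(s)\rangle_{\mathrm{Tr}}\,ds$.

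\textbf{Main obstacle.} The most delicate step is the $\D^{1,2}$-convergence of the Riemann approximants $\sum_i\mathbf{1}_{[t_i^n,t_{i+1}^n]}\langle F'(\zeta_{t_i^n}),u\rangle$ to $\mathbf{1}_{[0,t]}\langle F'(\zeta),u\rangle$ needed to activate Proposition~\ref{delta-continuity-thrm}: one must differentiate these sums via the product and chain rules and control the resulting $\gamma(H,H)$-valued derivatives uniformly in $n$, which forces the trace pairing produced by differentiating $F'(\zeta_{t_i^n})$ into precisely the form treated by Lemma~\ref{Ito-step4-formula}. The matching Taylor-remainder step is subtle because $\bar\zeta_i$ depends on $\omega$, so the substitution $F''(\bar\zeta_i)\rightsquigarrow F''(\zeta_{t_i^n})$ and the subsequent appeal to Theorem~\ref{Ito-step1} rely essentially on uniform continuity of $F''$ on path-compact sets and on the path-continuity assumption on $\zeta$.
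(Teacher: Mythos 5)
Your overall architecture coincides with the paper's: the same second-order Taylor expansion with a random intermediate point, the same four-way splitting of the quadratic term with Theorem~\ref{Ito-step1} handling the stochastic--stochastic piece, Lemma~\ref{Ito-step4-formula} producing the $D^-\zeta$ term, and Lemma~\ref{delta-intbyparts} to peel the trace correction off $F'(\zeta(t_i^n))\bigl(\int_{t_i^n}^{t_{i+1}^n}u\,dW_U\bigr)$. The drift terms, the cross terms, and the replacement of $F''(\overline\zeta_i^n)$ by a nearby value of $F''\circ\zeta$ are all handled as in the paper (the paper feeds the interpolant through the $\overline\zeta_i^n$ directly into Theorem~\ref{Ito-step1} rather than first swapping to $F''(\zeta(t_i^n))$, but that is immaterial).

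There is, however, a genuine gap in your treatment of the remaining Skorohod term, and it sits exactly where you flag the ``main obstacle.'' You propose to prove $\DD^{1,2}(H)$-convergence of the Riemann approximants $\sum_i\mathbf{1}_{[t_i^n,t_{i+1}^n]}\inprod{F'(\zeta(t_i^n)),u}$ to $\mathbf{1}_{[0,t]}\inprod{F'(\zeta),u}$ and then invoke the continuity of $\delta$ from $\DD^{1,p}$ to $L^p$ (Proposition~\ref{delta-continuity-thrm}). This cannot work under the stated hypotheses \eqref{eq:assumpLp}: by the product rule (Lemma~\ref{lem:productrule}) with $F'(\zeta(t_i^n))\in\DD^{1,2}(E^*)$ and $u\in\DD^{1,2}$, the pairing $\inprod{F'(\zeta(t_i^n)),u}$ is only controlled in $\DD^{1,1}(H)$ --- its derivative contains the product $\inprod{u,F''(\zeta(t_i^n))D\zeta(t_i^n)}$ of two merely square-integrable quantities --- and Proposition~\ref{delta-continuity-thrm} (i.e.\ Meyer's inequalities) requires $p>1$. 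Indeed, the paper's Remark after the theorem points out that one only gets $\inprod{F'(\zeta),\mathbf{1}_{[0,t]}u}\in\DD^{1,p/2}(H)$ after strengthening \eqref{eq:assumpLp} to some $p>2$; at $p=2$ your route does not even establish that the limit integrand lies in $\mathrm{Dom}(\delta)$. The paper avoids this entirely by a soft argument: since every other term in the Taylor identity has already been shown to converge in $L^1(\O)$, the sum of the Skorohod integrals must converge in $L^1(\O)$ to some $\xi$; the integrands converge in $L^1(\O;H)$ by path continuity of $\zeta$ and boundedness of $F'$; and the \emph{closedness} of $\delta$ on $L^1$ then both places $\inprod{F'(\zeta),\mathbf{1}_{[0,t]}u}$ in $\mathrm{Dom}(\delta)$ and identifies $\xi=\delta(\inprod{F'(\zeta),\mathbf{1}_{[0,t]}u})$. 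You should replace your $\DD^{1,2}$-convergence step by this closedness argument (or else strengthen the integrability assumptions as in the paper's remark).
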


Note that the term with $D^{-}$ is an additional term which is not present in the adapted setting.
A similar result in the case that $E$ is a Hilbert space can be found in \cite{GrPar}. Our proof is based on the ideas in \cite[Theorem 3.2.2]{Nualart2}.

\begin{rmrk}\
\begin{enumerate}
\item If $F'$ and $F''$ are not bounded, one can usually approximate $F$ with a sequence of functions that does satisfy the smoothness and boundedness conditions. In particular, such a procedure works in the important case where $F:E\to \R$, where $F(x) = \|x\|^s$, $s\geq 2$ and $E$ is an $L^q$-space with $q\geq 2$.
\item If the condition \eqref{eq:assumpLp} is strengthened to
\begin{align*}
\zeta_0 &\in \DD^{1,p}(E), & & D\zeta_0 \in L^p(\Omega;L^2(0,T;\gamma(U,E))) \nonumber \\
u &\in \DD^{1,p}(L^2(0,T;\gamma(U,E))), & & Du \in L^p(0,T;\DD^{1,p}(\gamma(H, \gamma(U,E)))), \\
v &\in \DD^{1,p}(L^2(0,T;E)), & & Dv \in L^1(0,T;L^p(\Omega;L^2(0,T;\gamma(U,E)))), \nonumber
\end{align*}
for some $p>2$, then by Lemma \ref{lem:productrule} and Proposition \ref{ChainRule-2} one actually has $\inprod{F'(\zeta),\mathbf{1}_{[0,t]}u})\in \D^{1,p/2}(H)$.
\item Using Theorem \ref{thm:pind and delta-weak=strong-UMD} in the same way as in Proposition \ref{ChainRule-2} one could extend the result to functions $F:E\to E_1$, where $E_1$ another \umd Banach space. However, in that case the traces have to be extended to the vector-valued setting as well.
\item Sufficient conditions for the existence of a continuous version of $\zeta$ can be found in Theorem \ref{integral-process-continuous-version}.
\end{enumerate}
\end{rmrk}

\begin{proof}
Set $t^n_i = \frac{it}{2^n}$, $0\leq i\leq 2^n$. Consider the Taylor expansion of $F(\zeta_t)$ up to the second order
\begin{align*}
F(\zeta_t) = \ &F(\zeta_0) + \sum_{i=1}^{2^n-1} F'(\zeta(t_i^n))(\zeta(t_{i+1}^n) - \zeta(t_i^n)) \\
 &+ \sum_{i=0}^{2^n-1} \frac{1}{2} \inprod{ \zeta(t^n_{i+1}) - \zeta(t^n_i), F''(\overline{\zeta}_i^n)(\zeta(t^n_{i+1}) - \zeta(t^n_i))}_{E, E^*}.
\end{align*}
Here, $\overline{\zeta}_i^n$ denotes a random intermediate point on the line between $\zeta(t_i^n)$ and $\zeta(t_{i+1}^n)$. It is well known that this can be done in such a way that $\overline{\zeta}_i^n$ is measurable. Now the proof will be decomposed in several steps.

\smallskip
\textit{Step 1:} We show that
\[\sum_{i=0}^{2^n-1} \inprod{ \zeta(t^n_{i+1}) - \zeta(t^n_i), F''(\overline{\zeta}_i^n)(\zeta(t^n_{i+1}) - \zeta(t^n_i))}_{E, E^*} \to \int_0^t \lb u(s), F''(\zeta_s)u_s\rb_{\mathrm{Tr}} \, ds\]
in $L^1(\O)$. Note that the increment $\zeta(t^n_{i+1}) - \zeta(t^n_i)$ equals
\[ \zeta(t^n_{i+1}) - \zeta(t^n_i) = \int_{t_i^n}^{t_{i+1}^n}  v(s) \, ds + \int_{t_i^n}^{t_{i+1}^n} u(s)\, dW_U(s). \]
Therefore, we can divide \[\sum_{i=0}^{2^n-1}\inprod{ \zeta(t^n_{i+1}) - \zeta(t^n_i), F''(\overline{\zeta}_i^n)(\zeta(t^n_{i+1}) - \zeta(t^n_i))}_{E, E^*}\] into $4$ parts. Consider the first piece
\[ \sum_{i=0}^{2^n-1} \Big\langle \int_{t_i^n}^{t_{i+1}^n}  v(s) \, ds, F''(\overline{\zeta}_i^n)\Big( \int_{t_i^n}^{t_{i+1}^n}  v(s) \, ds \Big) \Big\rangle.\]
Pointwise in $\O$ and for all $i, n$, one has
\begin{align*}
\Big|\sum_{i=0}^{2^n-1} \Big\langle \int_{t_i^n}^{t_{i+1}^n}  v(s) \, ds, F''(\overline{\zeta}_i^n)\Big( \int_{t_i^n}^{t_{i+1}^n}  v(s) \, ds \Big) \Big\rangle\Big| & \leq \|F''\|_{\infty} \sum_{i=0}^{2^n-1}\Big\| \int_{t_i^n}^{t_{i+1}^n}  v(s) \, ds \Big \|^2 \\
&\leq \|F''\|_{\infty} T2^{-n} \|v\|^2_{L^2(0,T;E)}
\end{align*}
The latter clearly goes to zero in $L^1(\O)$ as $n\to \infty$.
Next, both the second and the third part are pointwise dominated by
\begin{align*}
\|F''\|_{\infty} \sum_{i=0}^{2^n-1} \Big\| \int_{t_i^n}^{t_{i+1}^n} v(s)\; ds \Big\| \Big\| \int_{t_i^n}^{t_{i+1}^n} u(s)\; dW_U(s)\Big\| =: \xi_n
\end{align*}
We show that $\limn \xi_n = 0$ in $L^1(\O)$. Indeed, by Meyer's inequalities one has
\begin{align*}
\EE\sum_{i=0}^{2^n-1} &\Big\| \int_{t_i^n}^{t_{i+1}^n} v(s)\; ds \Big\| \Big\| \int_{t_i^n}^{t_{i+1}^n} u(s)\; dW_U(s)\Big\| \\
 &\leq \sqrt{t2^{-n}} \|v\|_{L^2(0,T\times\O;E)} \Big( \sum_{i=0}^{2^n-1} \|\mathbf{1}_{[t_i^n, t_{i+1}^n]} u\|^2_{\DD^{1,2}(\gamma(H,E))}\Big)^{\frac{1}{2}}.
\end{align*}
Now by the type $2$ assumption we have
\begin{align*}
\sum_{i=0}^{2^n-1} \|\mathbf{1}_{[t_i^n, t_{i+1}^n]} u\|^2_{\DD^{1,2}(\gamma(H,E))}& \lesssim_E \sum_{i=0}^{2^n-1} \|\mathbf{1}_{[t_i^n, t_{i+1}^n]} u\|^2_{\DD^{1,2}(L^2(0,T;\gamma(U,E)))}
\leq \|u\|^2_{\DD^{1,2}(L^2(0,T;\gamma(U,E)))}.
\end{align*}
Therefore, we find that $\limn \xi_n = 0$ in $L^1(\O)$, from which we see that the second and third term converge to zero in $L^1(\O)$.

To finish step 1, observe that $Z = F''\circ \zeta$ that continuous paths. Moreover, the process $Z^n=F''\circ \zeta_n$ where $\zeta_n$ is the process obtained by letting $\zeta_n(t_i^n) = \overline{\zeta}_i^n$, $i=0, \ldots, 2^n$, and by linear interpolation at the intermediate points. Then by the pathwise continuity of $\zeta$, it is clear that pointwise in $\O$, $\limn \sup_{t\in [0,T]}\|Z^n(t) - Z(t)\| =0$.
Hence, by Theorem \ref{Ito-step1} with $\sigma_i^n = t_i^n$,
\[\sum_{i=0}^{2^n-1} \Big\langle \int_{t_i^n}^{t_{i+1}^n} u(s) \; dW_U(s), F''(\overline{\zeta}_i^n) \int_{t_i^n}^{t_{i+1}^n} u(s) \; dW_U(s) \Big\rangle \to \int_0^t \mathrm{Tr}(F''(\zeta_s)(u_s, u_s)) \; ds\]
in $L^1(\O)$ as $n\to\infty$.

\smallskip
\textit{Step 2}: One has
\begin{align*}
\Big| &\sum_{i=0}^{2^n-1} F'(\zeta(t_i^n)) \Big(\int_{t_i^n}^{t_{i+1}^n} v(s) \; ds\Big) - \int_0^t F'(\zeta_s)v(s) \;ds \Big|\\
 &\leq \sum_{i=0}^{2^n-1} \int_{t_i^n}^{t_{i+1}^n} |(F'(\zeta(t_i^n))-F'(\zeta_s))v(s)|\;ds \leq \sup_{|s-r|\leq t2^{-n}} \|F'(\zeta_s) - F'(\zeta_r)\| \int_0^t \|v(s)\|\;ds.
 \end{align*}
By the the pathwise continuity of $\zeta$ and the dominated convergence theorem the latter converges to zero in $L^1(\O)$ as $n\to \infty$.

\smallskip
\textit{Step 3}: As in Lemma \ref{lem:estDzeta} one can show that $\zeta(t) \in \DD^{1,2}(E)$ for each $t\in [0,T]$. Therefore,
by Proposition \ref{ChainRule-2}, we have $F'(\zeta(t_i^n)) \in \DD^{1,2}(E^*)$ for all $i,n$.
By Proposition \ref{delta-continuity-thrm} one has $u\in {\rm Dom}(\delta)$, and with Lemma \ref{delta-intbyparts}, we obtain
\begin{align*}
\sum_{i=0}^{2^n-1} F'(\zeta(t_i^n))\Big(\int_{t_i^n}^{t_{i+1}^n} u(s) \;dW_U(s)\Big) & = \sum_{i=0}^{2^n-1} \int_{t_i^n}^{t_{i+1}^n} \inprod{u(s), F'(\zeta(t_i^n))}\;dW_U(s) \\
& \ \  + \sum_{i=0}^{2^n-1} \int_{t_i^n}^{t_{i+1}^n} \inprod{u(s), F''(\zeta(t_i^n))D(\zeta(t_i^n))(s)}_{\mathrm{Tr}}\, ds.
\end{align*}

By Lemma \ref{Ito-step4-formula} with $Z(s) := F''(\zeta(s))$ one obtains
\begin{align*}
\sum_{i=0}^{2^n-1} \int_{t_i^n}^{t_{i+1}^n} \inprod{u(s), F''(\zeta(t_i^n))D(\zeta(t_i^n))(s)}_{\mathrm{Tr}}\, ds \to \int_0^t \inprod{u(s), F''(\zeta(s))((D^-\zeta)(s))}_{\mathrm{Tr}} \, ds
\end{align*}
in $L^1(\O)$, as $n\to\infty$.
To finish the proof we need to show that
\begin{align*}
\int_0^t \sum_{i=0}^{2^n-1}  \one_{(t_i^n,t_{i+1}^n)}(s) \inprod{u(s), F'(\zeta(t_i^n))}\;dW_U(s) \to \int_0^t \inprod{u(s), F'(\zeta(s))} \;dW_U(s),
\end{align*}
in $L^{1}(\O)$ as $n\to \infty$.
To prove the latter note that because of the identity in the Taylor development in the beginning of the proof, and the convergence in $L^1(\O)$ of all other terms, we know that the lefthand side of the previous formula converges in $L^1(\O)$ to some $\xi$, and it remains to identify its limit.
Since $\delta$ is a closed operator on $L^1(\O;H)$, it suffices to note that
\[\limn \Big\|s\mapsto \sum_{i=0}^{2^n-1}  \one_{(t_i^n,t_i^n)}(s) \inprod{u(s), F'(\zeta(t_i^n))} - \inprod{u(s), F'(\zeta(s))} \Big\|_{L^1(\O;H)}=0.\]
where we used the pathwise continuty of $\zeta$.
We can conclude that $\xi = \int_0^t \inprod{u(s), F'(\zeta(s))}\, d W_U(s)$ and this completes the proof.
\end{proof}


\begin{thebibliography}{10}

\bibitem{Adams}
R.A. Adams and J.J.F. Fournier.
\newblock {\em Sobolev spaces}, volume 140 of {\em Pure and Applied Mathematics
  (Amsterdam)}.
\newblock Elsevier/Academic Press, Amsterdam, second edition, 2003.

\bibitem{Amann}
H.~Amann.
\newblock Compact embeddings of vector-valued {S}obolev and {B}esov spaces.
\newblock {\em Glas. Mat. Ser. III}, 35(55)(1):161--177, 2000.
\newblock Dedicated to the memory of Branko Najman.

\bibitem{Bax}
P.~Baxendale.
\newblock Gaussian measures on function spaces.
\newblock {\em Amer. J. Math.}, 98(4):891--952, 1976.

\bibitem{Bell}
D.R. Bell.
\newblock {\em The {M}alliavin calculus}, volume~34 of {\em Pitman Monographs
  and Surveys in Pure and Applied Mathematics}.
\newblock Longman Scientific \& Technical, Harlow, 1987.

\bibitem{BGJ}
K.~Bichteler, J.-B. Gravereaux, and J.~Jacod.
\newblock {\em Malliavin calculus for processes with jumps}, volume~2 of {\em
  Stochastics Monographs}.
\newblock Gordon and Breach Science Publishers, New York, 1987.

\bibitem{Bism}
J.-M. Bismut.
\newblock {\em Large deviations and the {M}alliavin calculus}, volume~45 of
  {\em Progress in Mathematics}.
\newblock Birkh\"auser Boston Inc., Boston, MA, 1984.

\bibitem{Boga-Gaussian}
V.I. Bogachev.
\newblock {\em Gaussian measures}, volume~62 of {\em Mathematical Surveys and
  Monographs}.
\newblock American Mathematical Society, Providence, RI, 1998.

\bibitem{Bog10}
V.I. Bogachev.
\newblock {\em Differentiable measures and the {M}alliavin calculus}, volume
  164 of {\em Mathematical Surveys and Monographs}.
\newblock American Mathematical Society, Providence, RI, 2010.

\bibitem{NVW2}
Z.~Brze{\'z}niak, J.M.A.M.~van Neerven, M.C. Veraar, and L.W. Weis.
\newblock It\^o's formula in {UMD} {B}anach spaces and regularity of solutions
  of the {Z}akai equation.
\newblock {\em J. Differential Equations}, 245(1):30--58, 2008.

\bibitem{Bu3}
D.L. Burkholder.
\newblock Martingales and singular integrals in {B}anach spaces.
\newblock In {\em Handbook of the geometry of Banach spaces, Vol. I}, pages
  233--269. North-Holland, Amsterdam, 2001.

\bibitem{CarmTeh}
R.A. Carmona and M.R. Tehranchi.
\newblock {\em Interest rate models: an infinite dimensional stochastic
  analysis perspective}.
\newblock Springer Finance. Springer-Verlag, Berlin, 2006.

\bibitem{DPMal}
G.~Da~Prato.
\newblock {\em Introduction to stochastic analysis and {M}alliavin calculus},
  volume~7 of {\em Appunti. Scuola Normale Superiore di Pisa (Nuova Serie)
  [Lecture Notes. Scuola Normale Superiore di Pisa (New Series)]}.
\newblock Edizioni della Normale, Pisa, second edition, 2008.

\bibitem{NOP}
G.~Di~Nunno, B.~Oksendal, and F.~Proske.
\newblock {\em Malliavin calculus for {L}\'evy processes with applications to
  finance}.
\newblock Universitext. Springer-Verlag, Berlin, 2009.

\bibitem{DJT}
J.~Diestel, H.~Jarchow, and A.~Tonge.
\newblock {\em Absolutely summing operators}, volume~43 of {\em Cambridge
  Studies in Advanced Mathematics}.
\newblock Cambridge University Press, Cambridge, 1995.

\bibitem{Fil}
D.~Filipovi{\'c}.
\newblock {\em Consistency problems for {H}eath-{J}arrow-{M}orton interest rate
  models}, volume 1760 of {\em Lecture Notes in Mathematics}.
\newblock Springer-Verlag, Berlin, 2001.

\bibitem{Ga1}
D.~J.~H. Garling.
\newblock Brownian motion and {UMD}-spaces.
\newblock In {\em Probability and Banach spaces (Zaragoza, 1985)}, volume 1221
  of {\em Lecture Notes in Math.}, pages 36--49. Springer, Berlin, 1986.

\bibitem{GrPar}
A.~Grorud and {\'E}.~Pardoux.
\newblock Int\'egrales hilbertiennes anticipantes par rapport \`a un processus
  de {W}iener cylindrique et calcul stochastique associ\'e.
\newblock {\em Appl. Math. Optim.}, 25(1):31--49, 1992.

\bibitem{Im08}
P.~Imkeller.
\newblock {\em Malliavin's calculus and applications in stochastic control and
  finance}, volume~1 of {\em IMPAN Lecture Notes}.
\newblock Polish Academy of Sciences Institute of Mathematics, Warsaw, 2008.

\bibitem{Jan97}
S.~Janson.
\newblock {\em Gaussian {H}ilbert spaces}, volume 129 of {\em Cambridge Tracts
  in Mathematics}.
\newblock Cambridge University Press, Cambridge, 1997.

\bibitem{KaWe}
N.J. Kalton and L.W. Weis.
\newblock The {$H^\infty$}-calculus and square function estimates.
\newblock Preprint, 2004.

\bibitem{LeTa}
M.~Ledoux and M.~Talagrand.
\newblock {\em Probability in {B}anach spaces: isoperimetry and processes},
  volume~23 of {\em Ergebnisse der Mathematik und ihrer Grenzgebiete}.
\newblock Springer-Verlag, Berlin, 1991.

\bibitem{LeNu}
J.A. Le{\'o}n and D.~Nualart.
\newblock Stochastic evolution equations with random generators.
\newblock {\em Ann. Probab.}, 26(1):149--186, 1998.

\bibitem{LinPie}
V.~Linde and A.~Pietsch.
\newblock Mappings of {G}aussian measures of cylindrical sets in {B}anach
  spaces.
\newblock {\em Teor. Verojatnost. i Primenen.}, 19:472--487, 1974.
\newblock English translation in: Theory Probab. Appl. 19 (1974), 445--460.

\bibitem{janmaas}
J.~Maas.
\newblock Malliavin calculus and decoupling inequalities in {B}anach spaces.
\newblock {\em J. Math. Anal. Appl.}, 363(2):383--398, 2010.

\bibitem{janjan}
J.~Maas and J.M.A.M.~van Neerven.
\newblock A {C}lark-{O}cone formula in {UMD} {B}anach spaces.
\newblock {\em Electron. Commun. Probab.}, 13:151--164, 2008.

\bibitem{Malliavin78}
P.~Malliavin.
\newblock Stochastic calculus of variation and hypoelliptic operators.
\newblock In {\em Proceedings of the {I}nternational {S}ymposium on
  {S}tochastic {D}ifferential {E}quations ({R}es. {I}nst. {M}ath. {S}ci.,
  {K}yoto {U}niv., {K}yoto, 1976)}, pages 195--263, New York, 1978. Wiley.

\bibitem{Mal97}
P.~Malliavin.
\newblock {\em Stochastic analysis}, volume 313 of {\em Grundlehren der
  Mathematischen Wissenschaften [Fundamental Principles of Mathematical
  Sciences]}.
\newblock Springer-Verlag, Berlin, 1997.

\bibitem{MalNu}
P.~Malliavin and D.~Nualart.
\newblock Quasi-sure analysis of stochastic flows and {B}anach space valued
  smooth functionals on the {W}iener space.
\newblock {\em J. Funct. Anal.}, 112(2):287--317, 1993.

\bibitem{MalTha}
P.~Malliavin and A.~Thalmaier.
\newblock {\em Stochastic calculus of variations in mathematical finance}.
\newblock Springer Finance. Springer-Verlag, Berlin, 2006.

\bibitem{MayZak1}
E.~Mayer-Wolf and M.~Zakai.
\newblock The {C}lark-{O}cone formula for vector valued {W}iener functionals.
\newblock {\em J. Funct. Anal.}, 229(1):143--154, 2005.

\bibitem{MayZak2}
E.~Mayer-Wolf and M.~Zakai.
\newblock Corrigendum to: ``{T}he {C}lark-{O}cone formula for vector valued
  {W}iener functionals'' [{J}. {F}unct. {A}nal. {\bf 229} (2005), no. 1,
  143--154; 2180077].
\newblock {\em J. Funct. Anal.}, 254(7):2020--2021, 2008.

\bibitem{MC}
T.~R. McConnell.
\newblock Decoupling and stochastic integration in {UMD} {B}anach spaces.
\newblock {\em Probab. Math. Statist.}, 10(2):283--295, 1989.

\bibitem{NaSch}
A.~Naor and G.~Schechtman.
\newblock Remarks on non linear type and {P}isier's inequality.
\newblock {\em J. Reine Angew. Math.}, 552:213--236, 2002.

\bibitem{Neerven-Radon}
J.M.A.M.~van Neerven.
\newblock {$\gamma$}-radonifying operators---a survey.
\newblock In {\em The {AMSI}-{ANU} {W}orkshop on {S}pectral {T}heory and
  {H}armonic {A}nalysis}, volume~44 of {\em Proc. Centre Math. Appl. Austral.
  Nat. Univ.}, pages 1--61. Austral. Nat. Univ., Canberra, 2010.

\bibitem{NVW1}
J.M.A.M.~van Neerven, M.~C. Veraar, and L.~Weis.
\newblock Stochastic integration in {UMD} {B}anach spaces.
\newblock {\em Ann. Probab.}, 35(4):1438--1478, 2007.

\bibitem{NVW11eq}
J.M.A.M.~van Neerven, M.C. Veraar, and L.W. Weis.
\newblock {Maximal {$L^p$}-regularity for stochastic evolution equations}.
\newblock {\em SIAM J. Math. Anal.}, 44(3):1372--1414, 2012.

\bibitem{NVW10}
J.M.A.M.~van Neerven, M.C. Veraar, and L.W. Weis.
\newblock Stochastic maximal {$L^p$}-regularity.
\newblock {\em Ann. Probab.}, 40(2):788--812, 2012.

\bibitem{Nei}
A.~L. Neidhardt.
\newblock {\em {S}tochastic {I}ntegrals in $2$-{U}niformly {S}mooth {B}anach
  {S}paces}.
\newblock PhD thesis, University of Wisconsin, 1978.

\bibitem{Nualart2}
D.~Nualart.
\newblock {\em The {M}alliavin calculus and related topics}.
\newblock Probability and its Applications (New York). Springer-Verlag, Berlin,
  second edition, 2006.

\bibitem{Ocone}
D.L. Ocone.
\newblock A guide to the stochastic calculus of variations.
\newblock In {\em Stochastic analysis and related topics ({S}ilivri, 1986)},
  volume 1316 of {\em Lecture Notes in Math.}, pages 1--79. Springer, Berlin,
  1988.

\bibitem{Pisier}
G.~Pisier.
\newblock Probabilistic methods in the geometry of {B}anach spaces.
\newblock In {\em Probability and analysis ({V}arenna, 1985)}, volume 1206 of
  {\em Lecture Notes in Math.}, pages 167--241. Springer, Berlin, 1986.

\bibitem{PisierMeyer}
G.~Pisier.
\newblock Riesz transforms: a simpler analytic proof of {P}.-{A}. {M}eyer's
  inequality.
\newblock In {\em S\'eminaire de {P}robabilit\'es, {XXII}}, volume 1321 of {\em
  Lecture Notes in Math.}, pages 485--501. Springer, Berlin, 1988.

\bibitem{Pronk}
M.~Pronk.
\newblock A note on the truncated {S}korohod integral process.
\newblock online first in Stochastics, 2013.

\bibitem{Ru91}
W.~Rudin.
\newblock {\em Functional analysis}.
\newblock International Series in Pure and Applied Mathematics. McGraw-Hill
  Inc., New York, second edition, 1991.

\bibitem{SanzSole}
M.~Sanz-Sol{\'e}.
\newblock {\em Malliavin calculus}.
\newblock Fundamental Sciences. EPFL Press, Lausanne, 2005.
\newblock With applications to stochastic partial differential equations.

\bibitem{Wat}
S.~Watanabe.
\newblock {\em Lectures on stochastic differential equations and {M}alliavin
  calculus}, volume~73 of {\em Tata Institute of Fundamental Research Lectures
  on Mathematics and Physics}.
\newblock Published for the Tata Institute of Fundamental Research, Bombay,
  1984.
\newblock Notes by M. Gopalan Nair and B. Rajeev.

\end{thebibliography}
\end{document}